\documentclass{amsart}

\usepackage[T1]{fontenc} 
\usepackage[utf8]{inputenc}
\usepackage[english]{babel} 

\usepackage{amsmath,amssymb}
\usepackage{thmtools} %to make cleverref work!

\usepackage[dvipsnames]{xcolor}
\usepackage{amsthm}
\usepackage[pagebackref, colorlinks=true, linkcolor=MidnightBlue, citecolor=OliveGreen]{hyperref}
\usepackage[capitalise,nameinlink]{cleveref}
\usepackage{mathdots}

\usepackage{url}
\usepackage{booktabs}
\usepackage{lipsum} 

\usepackage{ dsfont }

\usepackage{amsfonts,amssymb,mathtools}
\usepackage{dsfont}% per scrivere dei carateeri matematici
\usepackage{subfig} %per fare delle tabelle
\usepackage[all]{xy} %per scrivere facilmente delle matrici

\usepackage{mathrsfs}
\usepackage{amscd}

\usepackage{extpfeil}

\DeclareMathOperator{\Aut}{Aut}
\DeclareMathOperator{\rk}{rk}

\DeclareMathOperator{\diam}{diam}

\newcommand{\Zp}{\mathbb{Z}_p}

\theoremstyle{plain}

\newtheorem{theorem}{Theorem}[section]
\newtheorem{lemma}[theorem]{Lemma}
\newtheorem{proposition}[theorem]{Proposition}
\newtheorem{corollary}[theorem]{Corollary}

\newtheorem*{question}{Question}
\newtheorem{remark}[theorem]{Remark}

\newtheorem*{MainQuestion}{Main Question}

\newtheorem{thmABC}{Theorem}

\newtheorem{propABC}[thmABC]{Proposition}

\theoremstyle{definition}
\newtheorem{definition}[theorem]{Definition}
\newtheorem{example}[theorem]{Example}

 \usepackage{todonotes}

\usepackage{quoting}
\quotingsetup{font=small}

\usepackage{graphicx} %per inserire le figure
\usepackage{chngcntr}% questi non so bene ma male non fanno lol
\usepackage{ifthen}
\usepackage{calc}

\newcommand{\commatteo}[1]{\footnotemark\marginpar{\color{blue} \tiny \footnotemark[\arabic{footnote}] #1}}

\newcommand{\commandrea}[1]{\footnotemark\marginpar{\color{violet} \tiny \footnotemark[\arabic{footnote}] #1}}

\newcommand{\commargherita}[1]{\footnotemark\marginpar{\color{teal} \tiny \footnotemark[\arabic{footnote}] #1}}

\allowdisplaybreaks

\usepackage[capitalise, nameinlink]{cleveref}
\usepackage{csquotes}
\usepackage[normalem]{ulem}

\usepackage{bbm}

\usepackage{enumitem}

\newcommand{\comm}[1]{}

\DeclareMathOperator{\Dv}{\Delta_{\mathrm{virt}}}

\DeclareMathOperator{\Gv}{\Gamma_{\mathrm{virt}}}

\thanks{The third author is a member of GNSAGA (INdAM) and kindly acknowledges their support.
 The fourth author is funded by the Italian
program Rita Levi Montalcini for young researchers (Ed.\ 2021). %{\color{red}We would like to thank J.\ Moritz Petschik for spotting a mistake in an previous version of the article.}}}
}

\title[Virtually generating graphs of pro-$p$ groups]{Virtually generating graphs of pro-$p$ groups}

\author{Yiftach Barnea}
\address{Royal Holloway, University of London, Egham, UK}
\email{y.barnea@rhul.ac.uk}

\author{Andrea Lucchini}
\address{Universit\`a degli Studi di Padova, Padova, Italy}
\email{lucchini@math.unipd.it}

  \author{Margherita Piccolo}
\address{FernUniversit\"at in Hagen, Hagen, Germany} \email{margherita.piccolo@fernuni-hagen.de}

    \author{Matteo Vannacci}
\address{Università degli Studi di Firenze, Florence, Italy}
\email{matteo.vannacci@unifi.it}

\keywords{Primary 20E18, 20F06}

\keywords{Virtually generating graphs, pro-$p$ groups}

\colorlet{red}{black}

\begin{document}

\begin{abstract}We study the virtually generating graph of pro-$p$ groups. We show that various classes of pro-$p$ groups have connected virtually generating graph and we bound its diameter in these cases; e.g.\ compact subgroups of analytic groups over local fields and the Nottingham group.\end{abstract}

\maketitle

%\commatteo{Title: Virtually generating graphs and openisers of pro-$p$ groups}

\section{Introduction}
The \textit{virtually generating graph} $\Gv(G)$ of a topological group~$G$ is the graph with vertex set $G$ and where two elements are adjacent if and only if they topologically generate an open subgroup of~$G$. The \textit{proper} virtually generating graph is the subgraph $\Dv(G)$ of $\Gv(G)$ obtained by removing its isolated vertices.

The virtually generating graph was suggested to the second author by the first author as a variant of the \textit{generating graph} $\Gamma(G)$ of a group $G$, that is, the graph whose vertex set is $G$ and where two elements are adjacent if and only if they topologically generate the entire group~$G$. We write $\Delta(G)$ for the subgraph of $\Gamma(G)$ obtained by removing its isolated vertices. The graph $\Delta(G)$ has received a lot of attention in the last years. For a survey of recent results in the area see, for instance, \cite{Bu} or \cite{Sc}. 

It is conjectured that $\Delta(G)$ is connected for any finite group $G$. However, this is not always the case for infinite profinite groups, as there exists a $2$-generated profinite group~$G$ such that both~${\Delta(G)}$ and~${\Dv(G)}$ have~$2^{\aleph_0}$ connected components, see~\cite{Luc20,Luc21}. One might think that the connectedness of the graphs $\Delta(G)$ and $\Dv(G)$ behaves similarly, but this is not the case for prosoluble groups. Indeed, in~\cite{Luc21} the second author shows that for every positive integer $t$ there exists a finitely generated prosoluble group $G$ such that the graph ${\Dv(G)}$ has precisely $t$ connected components, while the graph ${\Delta(G)}$ is connected of diameter at most~$3$ for every prosoluble group, cf.\ \cite{Luc20}. 

The next natural class to investigate is the class of pronilpotent groups. This was initiated in \cite{Mor22}.   
Since pronilpotent groups are cartesian products of their Sylow pro-$p$ subgroups, the investigation on the connectedness of ${\Dv(G)}$ reduces to the pro-$p$ case, see \cite[Proposition 1.3.3]{Mor22}. In \cite{Mor22} the virtually generating graph of several examples was studied (abelian pro-$p$ groups, free pro-$p$ groups, and the Nottingham group $\mathcal{N}(\mathbb{F}_p)$ for $p\geq5$), proving connectedness of the graph~$\Dv$ in these cases. 

In this paper we address the following question. By convention we consider the empty graph to be connected.
\begin{MainQuestion} Let $G$ be a pro-$p$ group. Is the proper virtually generating graph $\Dv(G)$ always connected? If so, is its diameter universally bounded?
\end{MainQuestion}

In this article, we will show that the answer to this question is positive for several interesting classes of pro-$p$ groups. To do so, it is useful to introduce a set that measures how `easy' it is to generate an open subgroup together with a fixed element. Let $G$ be a profinite group and $x\in G$. The \emph{openizer} of $x$ is  $$\mathbb{O}_x = \{y\in G  \mid \langle x,y\rangle \le_o G\}.$$
The idea in the background for many results of this paper is the following: if the openizer of a random element is `large' with high probability in a pro-$p$ group $G$, then
%is that having `large' openizers for a `large' subset of elements of a pro-$p$ group $G$ gives us 
we obtain connectivity of $\Dv(G)$ and some bounds on the diameter; see Lemma~\ref{lem:big_open} for the statement of our main technical lemma. This idea together with the results from \cite{BL04} readily yield the following which will be proved in \cref{sec:proof_thm_A}.

\begin{thmABC}\label{prop:A}
    %\item 
    Let $G$ {\color{red}be} a compact subgroup of $\mathcal{G}(F)$, where $\mathcal{G}$ is a semisimple algebraic group and $F$ is a non-archimedean local field of characteristic $p\ge 0$. Assume that $p$ does not divide the degree of the universal covering map $\mathcal{G}^{\mathrm{sc}}\to \mathcal{G}$, then ${\diam(\Dv(G))\leq2}$.
%\end{enumerate}
\end{thmABC}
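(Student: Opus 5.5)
The plan is to get diameter at most $2$ from an intersection argument on openizers. Let $\mu$ be the normalised Haar measure on $G$. It suffices to show that for every non-isolated vertex $x$ of $\Gv(G)$ the openizer $\mathbb{O}_x$ is measurable with $\mu(\mathbb{O}_x)>1/2$; ideally we will even get $\mu(\mathbb{O}_x)=1$. Granting this, take non-isolated $x,y$. Then $\mathbb{O}_x\cap\mathbb{O}_y$ has positive measure, hence is nonempty, and any $z$ in it is a common neighbour of $x$ and $y$, giving $d(x,y)\le 2$. This is exactly the pattern of Lemma~\ref{lem:big_open}, which I would invoke with the threshold $1/2$. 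If $G$ is finite, every subgroup is open, so $\Gv(G)$ is complete and there is nothing to prove. We may therefore assume $G$ is infinite.

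The core step is the measure estimate, and here I would feed in the results of \cite{BL04}. For a compact subgroup $G\le\mathcal{G}(F)$, let $\mathcal{H}$ be the Zariski closure of $G$. These results say the following: under the hypothesis that $p\nmid\deg(\mathcal{G}^{\mathrm{sc}}\to\mathcal{G})$, a closed subgroup $\langle x,y\rangle$ of $G$ is open as soon as it is ``sufficiently Zariski dense''. Concretely, it should suffice that it is Zariski dense in the relevant semisimple part of $\mathcal{H}$ and not contained in a subgroup defined over a proper subfield. The degree hypothesis is what rules out the inseparability and isogeny pathologies in positive characteristic, since there Zariski density alone does not force openness. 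Moreover, the set of $y$ failing this condition for a fixed $x$ is contained in a finite union of proper algebraic (or subfield-type) subsets meeting $G$ in a null set.

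So, for fixed non-isolated $x$, I would show that $\{y\in G: \langle x,y\rangle \text{ not open}\}$ is contained in the union of two kinds of sets:
\begin{enumerate}
\item the sets $G\cap M$, for $M$ running over finitely many $G$-conjugacy families of maximal proper closed subgroups of $\mathcal{H}$ that can contain $x$;
\item the corresponding subfield-type subgroups.
\end{enumerate}
Non-isolation of $x$ is used to guarantee that none of these sets is all of $G$. Indeed, some $y_0$ with $\langle x,y_0\rangle$ open exists, so $x$ is not trapped in a subgroup that swallows $G$. Each such set is then a proper Zariski-closed (or analytic) subset of the analytic manifold $G$, hence $\mu$-null. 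The union is over countably many such sets, since $G$ is compact and the conjugates can be handled by a dimension count on $\{(g,y): y\in gMg^{-1}\}$. This gives $\mu(\mathbb{O}_x)=1$.

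The main obstacle is this last step: passing from the ``random pair'' statements of \cite{BL04}, which concern $\mu\times\mu$-almost all pairs, to a statement for every fixed non-isolated $x$. A Fubini argument alone only gives $\mu(\mathbb{O}_x)=1$ for almost every $x$, which is not enough. I would first try to extract from the proof in \cite{BL04} the structural criterion for openness described above and then run the argument with $x$ fixed. The delicate cases are those where $x$ is, for example, unipotent or lies in a small torus. There the family of maximal subgroups containing $x$ is larger, and I would check that none of these families can contain a full-measure subset of $G$, again using only that $x$ has at least one neighbour.
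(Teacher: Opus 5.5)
\paragraph{Where your proposal falls short.} Your reduction is sound: if every non-isolated vertex $x$ had $\mu(\mathbb{O}_x)>1/2$, then any two non-isolated vertices would have a common neighbour. The gap is that this pointwise estimate, which is the entire content of the theorem, is never proved.

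\cite[Thm.~4.5]{BL04} only gives $Q(G,2)=1$. The openness criterion you attribute to it is asserted, not derived. So is the claim that, for fixed $x$, the bad set lies in finitely many null ``algebraic or subfield-type'' families. The mechanisms you propose also fail as stated:
\begin{enumerate}
\item In characteristic $p$ the decisive obstructions are subfield-type subgroups, for example conjugates of $\mathrm{SL}_2(\mathbb{F}_p(\!(t^2)\!))\cap G$ inside $G\le \mathrm{SL}_2(\mathbb{F}_p(\!(t)\!))$, or twisted forms over proper subfields. These are Zariski dense, so they are not proper Zariski-closed or $F$-analytic subsets.
\item $F$ has uncountably many closed subfields of finite index, so your countability claim (``since $G$ is compact'') has no basis.
\item Non-isolation of $x$, meaning one $y_0$ outside the bad union, does not stop an individual bad family from having positive measure. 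The relevant input is \cref{lem:open_pos_measure}: $\mathbb{O}_x$ is a nonempty open set, so no bad set can be conull. Combined with constructibility of each algebraic bad family, this could make a characteristic-$0$ argument work. The characteristic-$p$ subfield case would still need a genuinely new argument.
\end{enumerate}
You flag this step yourself as unresolved, so as it stands the proposal does not prove the statement.

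\paragraph{Comparison with the paper.} The paper takes exactly the Fubini route you set aside. \cite[Thm.~4.5]{BL04} gives $Q(G,2)=1$, \cref{prop:Q_L_1} turns this into $\mu(L_1)=1$, and \cref{lem:big_open}(2) is then applied to a single $x$ with $\mu(\mathbb{O}_x)=1$.

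Your scepticism about this route is justified. The proof of \cref{lem:big_open}(2) only shows $\mathbb{O}_x\cap\mathbb{O}_y\neq\emptyset$, i.e.\ that every non-isolated $y$ lies within distance $2$ of that particular $x$. Using all of $L_1$ gives a little more. For non-isolated $y_1,y_2$, choose $z\in\mathbb{O}_{y_1}\cap L_1$, which is possible because $\mathbb{O}_{y_1}$ is open. Then choose $w\in\mathbb{O}_z\cap\mathbb{O}_{y_2}$. This gives $d(y_1,y_2)\le 3$, but not $2$.

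In fact \cref{lem:big_open}(2) cannot hold as stated. For $\mathbb{Z}_2\wr D_4$, parts (1) and (2) of \cref{prop:Z2wrD4} together with Fubini show that almost every element of $\Sigma_{(1234)}$ has openizer of measure $1$. Yet that graph has diameter $3$.

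So reaching $\diam(\Dv(G))\le 2$ really requires the pointwise statement you were aiming for: $\mathbb{O}_x\cap\mathbb{O}_y\neq\emptyset$ for all non-isolated $x,y$. Neither your sketch nor the paper's chain of citations supplies it. The argument currently available yields only $\diam(\Dv(G))\le 3$.
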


In \cref{sec:p-adic-analytic}, we extend \cref{prop:A} to uniform pro-$p$ groups which are not necessarily semisimple. 
We recall that a profinite group $G$ has \emph{lower rank} at most $ r$, written $\mathrm{lr}(G)\le r$, if for all $H\le_o G$ there exists $L\le_o G$ with $L\le H$ and $\mathrm{d}(L)\le r$. By \cite{Ku51}, all compact subgroups of semisimple groups have lower rank at most $2$. 

{\color{red}\begin{thmABC}\label{thm:A} 
%The following (virtually) pro-$p$ groups have the graph $\Dv$ connected and with bounded diameter.
%\begin{enumerate}
    %\item 
    Let $U$ be a uniform pro-$p$ group with $\mathrm{lr}(G)\le 2$. Then, $\diam(\Dv(U))\leq2$.
\end{thmABC}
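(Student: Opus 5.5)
The plan is to pass to the $\mathbb{Q}_p$-Lie algebra of $U$ and turn adjacency in $\Gv(U)$ into a Zariski-open condition on pairs of Lie algebra elements. Let $n=\dim U$, let $L=\log U$ be the associated powerful $\mathbb{Z}_p$-Lie lattice, and let $\mathfrak g=\mathbb{Q}_p\otimes L\cong\mathbb{Q}_p^n$.

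\emph{Step 1: reformulating adjacency.} I will use the standard fact from the Lie theory of $p$-adic analytic groups that a closed subgroup $H\le U$ is open if and only if $\dim H=n$. I will also use that the $\mathbb{Q}_p$-Lie algebra of $\overline{\langle x,y\rangle}$ is the $\mathbb{Q}_p$-Lie subalgebra of $\mathfrak g$ generated by $\log x$ and $\log y$. Together these give
\[
x\sim y \text{ in } \Gv(U) \iff \log x,\log y \text{ generate } \mathfrak g \text{ as a } \mathbb{Q}_p\text{-Lie algebra}.
\]
Generation can be tested on the span of all Lie monomials of length at most $n$ in $a,b$. So the set $Z\subseteq\mathfrak g\times\mathfrak g$ of non-generating pairs $(a,b)$ is the common zero set of all $n\times n$ minors of the matrix of these monomials. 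Hence $Z$ is Zariski closed.

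\emph{Step 2: properness of the fibres.} The hypothesis $\mathrm{lr}(U)\le 2$ gives an open $2$-generated subgroup of $U$, so $\mathfrak g$ is $2$-generated and $Z\neq\mathfrak g\times\mathfrak g$. This is not quite what is needed. What matters is this: if $x$ is a non-isolated vertex with $a=\log x$, then $\mathbb{O}_x\neq\emptyset$, so the fibre $Z_a=\{c\in\mathfrak g:(a,c)\in Z\}$ is a \emph{proper} Zariski-closed subset of $\mathfrak g$.

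\emph{Step 3: a common neighbour.} Take non-isolated $x,y\in U$ with $a=\log x$ and $b=\log y$. Then $Z_a\cup Z_b\cup\{a,b\}$ is a proper Zariski-closed subset of the irreducible affine space $\mathfrak g$. The lattice $L$ is open in $\mathfrak g\cong\mathbb{Q}_p^n$, hence Zariski dense, so there is $c\in L$ outside this set. Then $z=\exp c\in U$ is adjacent to both $x$ and $y$, and $z\neq x,y$. Therefore $d(x,y)\le 2$ in $\Dv(U)$. Alternatively, the same fibre argument shows that each openizer $\mathbb{O}_x$ of a non-isolated $x$ has complement of Haar measure zero, so \cref{lem:big_open} could be applied directly.

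\emph{Main obstacle.} I expect Step 1 to be the delicate point. One must justify carefully that the closed subgroup topologically generated by $x$ and $y$ has Lie algebra equal to the Lie subalgebra generated by $\log x$ and $\log y$, rather than something smaller. I would prove this via the Lie correspondence for uniform groups: pass to the closed subgroup $H=\overline{\langle x,y\rangle}$, note that $\log$ of an open uniform subgroup of $H$ is a $\mathbb{Z}_p$-Lie subalgebra, and observe that it contains $p^k\log x$ and $p^k\log y$ for suitable $k$.
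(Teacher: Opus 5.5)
Your proposal is correct and is essentially the paper's argument. Lemma~\ref{lem:corresp} and the Lazard correspondence turn adjacency into generation of the $\mathbb{Q}_p$-Lie algebra, and Lemma~\ref{Lem:meraviglia} shows that the non-generating locus of a non-isolated vertex is a proper Zariski-closed set, so two such loci cannot exhaust the lattice. The paper phrases that last step as ``proper Zariski-closed sets have Haar measure zero'', you as ``$\log U$ is Zariski dense'', which amounts to the same thing.

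The one point to tighten is Step~1. Your sketch only shows that the Lie subalgebra $\mathfrak h$ generated by $\log x,\log y$ is contained in the Lie algebra of $\overline{\langle x,y\rangle}$. Step~2 needs the converse, namely that $\overline{\langle x,y\rangle}$ open implies $\mathfrak h=\mathfrak g$. This follows because $\exp(\mathfrak h\cap\log U)$ is a closed subgroup of $U$ containing $x$ and $y$: the Baker--Campbell--Hausdorff series of two elements of $\mathfrak h\cap\log U$ stays in the closed subspace $\mathfrak h$. Hence if $\overline{\langle x,y\rangle}\supseteq U^{p^k}$, then $p^k\log U\subseteq\mathfrak h$, and so $\mathfrak h=\mathfrak g$.
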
}

%\begin{propABC}\label{thm:A} 
%The following (virtually) pro-$p$ groups have the graph $\Dv$ connected and with bounded diameter.
%\begin{enumerate}
    %\item 
%    Let $G$ be a $p$-adic analytic group with lower rank at most $2$ and Haar measure of torsion elements equal to zero. {\color{red}Suppose additionally that, for a uniform subgroup $U$ of $G$, we have that $\Dv(U)=U\smallsetminus\{1\}$.} Then, $\diam(\Dv(G))\leq4$.
%\end{propABC}

%By a recent result of Toti \cite{Tot22}{\color{red} and by \cref{prop:uniform_full_Dv}
%{\color{red}In \cref{rmk:hypotheses}, we will show that the hypotheses of \cref{thm:A} hold for any compact subgroup of the $\mathbb{Q}_p$-points of a simple algebraic group. Moreover, in \cref{lem:metabelian_examples} we will exhibit a family of metabelian pro-$p$ groups that fit the hypotheses of \cref{thm:A}.} %$p$-adic analytic group which is not virtually solvable has torsion of Haar measure zero{\color{red}. Moreover, we will prove below that a uniform subgroup $U$ of $\mathcal{G}(\mathbb{Q}_p)$ H}ence, every $p$-adic analytic group with lower rank at most $2$ which is not virtually solvable satisfies the hypotheses of \cref{thm:A}.

%On the other hand, 
It seems to be a very difficult task to calculate the exact diameter for the proper virtually generating graph of %a solvable 
{\color{red}general} $p$-adic analytic groups with torsion {\color{red}which are not semisimple}. {\color{red}In \cref{sec:improvement} we extend the proof of \cref{thm:A} and we show that certain $p$-adic analytic groups with torsion set of Haar-measure zero have connected proper virtually generating graph of diameter at most $4$.

Continuing the study of {\color{red}non-semisimple} $p$-adic analytic pro-$p$ groups with torsion,} we give three examples of virtually-abelian $p$-adic analytic groups for which we can draw conclusions on the connectivity and the diameter.

\begin{propABC}\label{propABC:virt_ab} \quad 
\begin{enumerate}%[leftmargin=1.5em]
    \item The proper virtually generating graph $\Dv$ of the pro-$p$ group of maximal class has diameter $2$. %$\diam(\Dv(\mathbb{Z}_p[\zeta]\rtimes C_p))=2$.
    \item Let $G$ be the semidirect product $V \rtimes Q_8$ where $V$ is a free $\mathbb{Z}_p$-module of rank~4 with basis $\{\mathds{1},i,j,k\}$ and $Q_8$ is the quaternion group acting by multiplication. Then, $\diam(\Dv(G))=2$.
    \item The proper virtually generating graph $\Dv$ of the wreath product $\mathbb{Z}_2 \wr D_4$ has diameter~$3$. %For the wreath product $\mathbb{Z}_2 \wr D_4$, then $\diam(\Dv(\mathbb{Z}_2 \wr D_4))=3$.
\end{enumerate}
\end{propABC}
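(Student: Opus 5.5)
The three statements are about explicit virtually abelian groups of the form $G=A\rtimes P$, with $A\cong\mathbb{Z}_p^n$ and $P$ a finite $p$-group acting faithfully. I would use one common reduction for all of them and then do a case analysis for each group.

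\textbf{Common reduction.} For $x,y\in G$ put $H=\langle x,y\rangle$. Then $H$ is open if and only if $H\cap A$ has finite index in $A$, that is, $\operatorname{rk}(H\cap A)=n$. Write $\pi\colon G\to P$ for the projection and $Q=\pi(H)$. Then $H\cap A$ is a $\mathbb{Z}_p[Q]$-submodule of $A$. It is generated, as a module, by the following finitely many elements:
\begin{itemize}
\item the elements $x^{|Q|}$ and $y^{|Q|}$, or their appropriate versions when $x$ or $y$ already lies in $A$;
\item the elements $w(x,y)$, where $w$ runs over a set of defining relators of $Q$ evaluated in $H$.
\end{itemize}
So in each case I will compute the rank of a cyclic or $2$-generated $\mathbb{Z}_p[Q]$-submodule of $A$, for each subgroup $Q\le P$ generated by at most two elements. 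This gives an explicit description of adjacency.

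\textbf{Upper bound on the diameter.} I find a small family of ``hub'' vertices adjacent to all non-isolated vertices of a given type, and check that any two vertices share a hub.

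\textbf{Lower bound on the diameter.} I exhibit a pair of non-isolated vertices that are not adjacent (and, in part (3), have no common neighbour).

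\textbf{Part (1).} Take $G=\mathbb{Z}_p[\zeta]\rtimes\langle\sigma\rangle$, where $\sigma$ acts as multiplication by $\zeta$, so $A$ has rank $p-1$.

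First, every element $x=(a,\sigma^i)$ with $i\ne 0$ has order $p$, because
\[
(1+\zeta+\dots+\zeta^{p-1})a=0 .
\]
Next, $\mathbb{Z}_p[\zeta]c$ is open in $A$ for every $c\neq0$. From these two facts:
\begin{itemize}
\item If $x\notin A$ and $y\in A\smallsetminus\{0\}$, then $x$ and $y$ are adjacent.
\item If $x=(a,\sigma^i)\notin A$ and $y=(b,\sigma^j)\notin A$, choose $k$ with $jk\equiv i\pmod p$. Then $x$ and $y$ are adjacent if and only if $xy^{-k}\neq 1$, i.e.\ $x\ne y^k$.
\item Two elements of $A$ generate a subgroup of rank at most $2$.
\end{itemize}
Hence every nonzero element of $A$ is adjacent to $\sigma$. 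Any two elements outside $A$ have a common neighbour in $A\smallsetminus\{0\}$. Mixed pairs are adjacent. This gives diameter at most $2$.

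For $p\ge5$, two elements of $A$ are never adjacent, which gives diameter exactly $2$. For $p=2$ and $p=3$ I need a separate check to exhibit a non-adjacent pair. I would use pairs of the form $x,x^k$ outside $A$ (not adjacent by the criterion above), and I expect the precise definition of the maximal class group for small $p$ to matter here.

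\textbf{Part (2).} Here $p=2$, $V$ is the Lipschitz-type lattice, and $Q_8$ acts by left multiplication. The key structural fact is that $\mathbb{Z}_2[Q_8]u$ has rank $4$ for every $u\neq0$, since it is the left ideal generated by $u$ in a division algebra. So if $\pi(H)=Q_8$, then $H$ is open if and only if $H\cap V\neq0$.

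I would compute $H\cap V$ explicitly using two facts:
\[
(v,q)^2=(v+qv,\,q^2),\qquad (v,-1)^2=1 .
\]
Then I treat the cases $\pi(H)=Q_8$, $\pi(H)$ cyclic of order $4$ (where $V$ splits as a free $\mathbb{Z}_2[i]$-module of rank $2$), and $\pi(H)\le\{\pm1\}$.

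For the upper bound, I would show that every non-isolated vertex is adjacent to a suitable element of the form $(u,i)$ or $(u,j)$ with $u$ generic. For non-adjacency, I would use two vertices whose projections lie in the same cyclic subgroup.

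\textbf{Part (3).} Here $G=\mathbb{Z}_2^4\rtimes D_4$ with the permutation action. The permutation module decomposes rationally into the trivial, sign-type and $2$-dimensional pieces. A subgroup $H$ is open if and only if the $\mathbb{Q}_2[\pi(H)]$-span of the vectors obtained from $H\cap A$ is all of $\mathbb{Q}_2^4$. This reduces adjacency to a finite check over pairs of subgroups of $D_4$ and the ``translation parts'' of $x$ and $y$.

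The upper bound $3$ comes from showing that every non-isolated vertex is adjacent to an element projecting onto a $4$-cycle with generic translation part, and that any two such elements have a common neighbour. The main obstacle is the lower bound, which requires an explicit pair $x,y$ with no common neighbour. I would look for $x,y$ whose projections lie in different Klein four-subgroups, so that no single $z$ can complete both to generating pairs of $D_4$ with full-rank lattice part. I would verify this by going through the possible images $\pi(z)$, and possibly with a computer check on a finite quotient $(\mathbb{Z}/4)^4\rtimes D_4$.
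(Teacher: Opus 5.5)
Your treatment of parts (1) and (2) essentially follows the paper. In (1) you use the coset analysis showing that two elements outside $A$ are adjacent unless one is a power of the other; your $xy^{-k}$ is a tidier form of the paper's $(v_1s^i)^j(v_2s^j)^{-i}$. In (2) you observe that the non-isolated vertices are exactly those lying over $\pm i,\pm j,\pm k$, with generic adjacency between different classes and none inside a class. Three small repairs are needed.
\begin{itemize}
\item In (1) with $p=2$, your witness $x,x^k$ does not exist: every element outside $A$ is an involution and $k$ is odd, so $x^k=x$. Instead, note that $A\cong\mathbb{Z}_2$ is open. Hence the identity is a non-isolated vertex, and it is not adjacent to any involution.
\item In (2), hubs of the form $(u,i)$ or $(u,j)$ cannot serve a pair with one vertex over $\pm i$ and one over $\pm j$. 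Their common neighbours all lie over $\pm k$, so you need hubs in all three classes.
\item Your division-algebra shortcut is special to $p=2$. For odd $p$ the quaternion algebra over $\mathbb{Q}_p$ splits, and you need the paper's generic argument: $(x_1^2+x_2^2+x_3^2+x_4^2)^2$ is non-zero off a measure-zero set.
\end{itemize}

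Part (3), however, has a genuine gap. Your upper-bound claim is that every non-isolated vertex is adjacent to elements over a $4$-cycle with generic translation part. This is false.
\begin{itemize}
\item Take $g=(a,a,0,0)(13)(24)$ with $a\neq0$ and $y=x(1234)$. Since $(13)(24)=(1234)^2$, we have $\langle g,y\rangle=\langle gy^2,y\rangle$ with $gy^2\in V$.
\item So $\langle g,y\rangle\cap V$ is the $\langle(1234)\rangle$-module generated by $gy^2$ and $y^4=(z,z,z,z)$.
\item A direct computation shows that $gy^2$ lies in $W=\{z_1+z_3=z_2+z_4\}$ for \emph{every} $x$. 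This $W$ is $D_4$-stable of rank $3$: it is the kernel of the projection to the sign-type summand of your decomposition.
\item Hence $g$ has no neighbour over $(1234)^{\pm1}$. Yet $g$ is not isolated. For $a=1$ and $y=(0,0,1,0)(24)$, the elements $(gy)^2,\ y(gy)^2y^{-1},\ g^2,\ y^2$ are $(2,2,2,0),(2,0,2,2),(1,1,1,1),(0,0,2,0)$, with determinant $8$.
\end{itemize}
In the generic form you state it, your claim would give any two non-isolated vertices a common neighbour over $(1234)$. That would mean diameter $2$, contradicting the statement. Even granting your hubs, ``adjacent to a hub, and hubs share a neighbour'' only bounds the diameter by $1+2+1=4$.

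Your lower-bound heuristic also misses the mechanism. Openness does not need $\pi\langle x,y\rangle=D_4$: above, the images $\langle(1234)\rangle$ and $\langle(13),(24)\rangle$ suffice. The extremal pair is not a pair over different Klein four-subgroups; it lies over $1$ and the central element $(13)(24)$.
\begin{itemize}
\item Take a non-isolated $g_1\in V$, e.g.\ $(1,0,0,0)$, and $g_2=(1,1,0,0)(13)(24)$.
\item If $\pi(y)$ has order at most $2$, then $\langle g_1,y\rangle\cap V$ has rank at most $3$. So $\mathbb{O}_{g_1}\subseteq\Sigma_{(1234)^{\pm1}}$.
\item By the computation above, $\mathbb{O}_{g_2}\cap\Sigma_{(1234)^{\pm1}}=\emptyset$. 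Hence $g_1$ and $g_2$ have no common neighbour.
\end{itemize}
The missing idea is the paper's two-class structure. Non-isolated vertices not conjugate to any $(a,a,0,0)(13)(24)$ have openizers of full measure in $\Sigma_{(1234)}$, so they are pairwise at distance at most $2$. Those conjugate to $(a,a,0,0)(13)(24)$ with $a\neq0$ have openizers of full measure in $\Sigma_{(24)}$, whose elements belong to the first class. This gives distance at most $1+2=3$, attained by the pair above.

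A computation in $(\mathbb{Z}/4)^4\rtimes D_4$ cannot substitute for this argument. Neither $\mathbb{Z}_2$-ranks nor the absence of a common neighbour among all $z\in G$ are detected in a single finite quotient.
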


The parts of \cref{propABC:virt_ab} are proved in \cref{Sec:maxClass}, \cref{ex:Q8} and \cref{sec:small}, respectively. We remark that the wreath product in part (3) of \cref{propABC:virt_ab} is the only pro-$p$ group for which we can show that the graph $\Dv$ is of diameter $3$. We believe the virtually generating graphs of virtually abelian pro-$p$ groups should be studied further.

Up to this point most groups that we considered were of finite rank. For this reason, we also consider the wreath product $C_p \wr \mathbb{Z}_p$ and we show that $\diam(\Dv(C_p\wr \mathbb{Z}_p))=2$, see \cref{sec:WreathProcuts}.

Continuing with groups of infinite rank, using again \cref{lem:big_open}, we can study the connectivity of the virtually generating graph for the Nottingham group and some of its subgroups $\mathcal{Q}^1(s,r)$ defined by Ershov in \cite{Ers04}.

\begin{thmABC}\label{thm:nott_intro}
Let $p\ge 5$ be a prime and let $\mathcal{N}=\mathcal{N}(\mathbb{F}_p)$ be the Nottingham group. Then,  we have $\diam(\Dv(\mathcal{N}))=2$.
Moreover, if $\mathcal{Q}^1(s,r)$ is the subgroup of $\mathcal{N}$ defined in \cite{Ers04}, then $\diam(\Dv(\mathcal{Q}^1(s,r)))= 2$.
\end{thmABC}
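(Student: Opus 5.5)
The plan is to apply the main technical lemma, \cref{lem:big_open}, in the form ``if every vertex of $\Dv$ has an openizer of positive (or large) Haar measure, then any two vertices have a common neighbour''. For the Nottingham group, write $\mathcal{N}_k=\{t+\sum_{i\ge k+1}a_it^i\}$ for the standard filtration. Since $\mathcal{N}/\mathcal{N}_2\cong \mathbb{F}_p^2$ is the Frattini quotient for $p\ge5$, the first step is to identify the vertices of $\Dv(\mathcal{N})$. Any $x\in\Dv(\mathcal{N})$ is nontrivial. We must decide which depth $k$ (with $x\in\mathcal{N}_k\smallsetminus\mathcal{N}_{k+1}$) can still admit a $y$ such that $\langle x,y\rangle$ is open. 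I would use the known results on the Nottingham group: every nontrivial element has infinite order or order $p^n$. Elements of infinite order generate, together with a suitable element, an open subgroup. For torsion elements of order $p$ this can also happen, as in Ershov's and Klopsch's results on generation of open subgroups of $\mathcal{N}$.

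The key step is to show that for every $x\ne1$ the openizer $\mathbb{O}_x$ has positive Haar measure, and in fact contains a full coset of some open subgroup. I would do this via the graded Lie algebra $L(\mathcal{N})=\bigoplus_k \mathcal{N}_k/\mathcal{N}_{k+1}$, whose components are one-dimensional and spanned by $e_k$ with $[e_i,e_j]=(j-i)e_{i+j}$. If $x$ has leading term $e_k$ and $y$ has leading term $e_m$ with $m\ne k$, $m\not\equiv k \pmod p$ and suitable coprimality conditions, then commutators of $x$ and $y$ cover all sufficiently deep layers of $L(\mathcal{N})$. Hence $\langle x,y\rangle$ contains some $\mathcal{N}_n$ and is open. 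This condition on $y$ depends only on the image of $y$ modulo some $\mathcal{N}_{m+1}$, so it defines a union of cosets of $\mathcal{N}_{m+1}$. Consequently, for two vertices $x_1,x_2$ with leading degrees $k_1,k_2$, one can pick a degree $m$ and a leading coefficient that works simultaneously for both. A common neighbour $y$ then exists, which gives $\diam\le2$. The paper's result from \cite{Mor22} and \cref{lem:big_open} can be cited in place of the explicit choice.

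For the lower bound $\diam\ge2$, it suffices to exhibit two vertices that are not adjacent. For example, take $x$ and $x^p$ when $x$ has infinite order, or two elements lying in a common closed procyclic subgroup. Then $\langle x,x^p\rangle$ is procyclic, and it is not open because $\mathcal{N}$ is not virtually procyclic.

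For $\mathcal{Q}^1(s,r)$, I would repeat the argument with the graded Lie algebra of Ershov's subgroup. This algebra is again a graded subalgebra of $L(\mathcal{N})$ with controlled dimensions. The main obstacle is exactly here, and also in the torsion and deep-element cases for $\mathcal{N}$. The difficulty is verifying that the nonvanishing of $(j-i)$ modulo $p$ along the chain of commutators produces every deep layer. Elements whose leading degree is divisible by $p$ are delicate. For these I would first replace $x$ by a suitable element of $\langle x,y\rangle$ with a better leading term, or use $y$ of degree $1$ (or the minimal degree in $\mathcal{Q}^1(s,r)$) so that iterated brackets $[e_k,e_1,\dots,e_1]$ climb through all degrees with coefficients $\prod (k+i-1)$. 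I would then handle the degrees where these coefficients vanish modulo $p$ by adding a second generator-level correction to $y$.
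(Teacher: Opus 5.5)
Your proposal has a genuine gap exactly where you say the argument becomes ``delicate'', and leading terms alone cannot close it. In $L(\mathcal{N})$ one has $[e_i,e_j]=\pm(i-j)e_{i+j}$. For every $c\not\equiv 0 \pmod p$, the span of the $e_j$ with $j\equiv 0,\pm c \pmod p$ is a subalgebra, because two elements in the same non-zero class have bracket coefficient $\equiv 0$. Since $p\ge 5$, this subalgebra has infinite codimension. Together with the obvious gcd obstruction, this shows that at the graded level the hypotheses of \cref{prop:qr} on $(D(x),D(y))$ are necessary, not just sufficient. The conditions you list also omit $k,m\not\equiv 0$ and $k+m\not\equiv 0$. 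Consequently, if $D(x)\equiv 0 \pmod p$ no choice of $y$ works at leading order (this is \cref{rmk:nottingham}), and if $D(x)\equiv\pm1$ no $y$ of depth $1$ works at leading order. The missing ingredient is \cref{lem:Nott10-1}. Take $y\equiv t+\alpha t^2+\beta t^3 \pmod{\mathcal{N}_3}$ with $\alpha\ne 0$, and use the second-order commutator formula \cref{lem:coeff-Comm-Nott}. For $n=D(x)\equiv 1,0,-1$, the element $[x,y]$, $[[x,y],y]$, resp.\ $[[[x,y],y],y]$ then has depth exactly $n+2$, $n+3$, $n+4$, unless $\beta$ takes one value determined by $\alpha$. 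For $n\equiv 0$, for instance, the relevant coefficient is $\alpha a_n(\alpha^2-\beta)$. Each of these depths, paired with $1$, satisfies \cref{prop:qr}. Your ``second generator-level correction to $y$'' points in this direction. However, showing that these coefficients can be made non-zero is the actual content of the proof, and you do not do it.

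Second, producing one neighbour for each $x$ is not enough. Positive measure of $\mathbb{O}_x$ is automatic by \cref{lem:open_pos_measure}, and it does not force two openizers to meet. You need either $\mu(\mathbb{O}_x)>1/2$ for all $x\ne 1$, or an explicit common neighbour. The case analysis gives the bound $(p-1)^2/p^2$, so that \cref{lem:big_open} applies. Your step ``pick $m$ working for both'' is unjustified, and in general false at leading order. Besides depths $\equiv 0$, take $p=5$ with $D(x_1)\equiv 1$ and $D(x_2)\equiv 2$: the excluded residues $0,\pm1,\pm2$ exhaust $\mathbb{Z}/5\mathbb{Z}$. The depth-one computation above does give a common neighbour directly, since each vertex excludes at most one $\beta$ per $\alpha$. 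Citing \cite{Mor22} only yields diameter at most $4$.

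For $\mathcal{Q}^1(s,r)$ you do not identify the structure that makes the argument work. One needs Ershov's approximation $\varphi_{s,r}\colon \mathrm{SL}_2^1(\mathbb{F}_p[\![t]\!])\to\mathcal{Q}^1(s,r)$, which identifies $L(\mathcal{Q}^1(s,r))$ with $\mathfrak{sl}_2(\mathbb{F}_p)\otimes t\mathbb{F}_p[t]$. Then \cref{lem:prob_sl2} gives $\mu(\mathbb{LO}_x)\ge(p-1)^2/p^2$ for every $x\ne 1$, and the measure-preserving \cref{lem:approx} transfers this bound. ``A graded subalgebra of $L(\mathcal{N})$ with controlled dimensions'' is not a substitute. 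Your lower bound via $x$ and $x^p$ is fine. Note also that the Frattini quotient is $\mathcal{N}/\mathcal{N}_3$, not $\mathcal{N}/\mathcal{N}_2$.
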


We note that the proofs for the Nottigham group and those for its subgroups are quite different; cfr.\  \cref{sec:Nottingham} and \cref{sec:Lie-algebra-methods}, respectively. In particular, in the proof of \cref{thm:nott_intro} we sharpen Mori's bound on the diameter of the virtually generating graph of the Nottingham group, proving lower bounds on the measure of the openizers of all elements.

 As mention before, many of the aforementioned results rely on having `large' openizers and it is easy to see that non-empty openizers in pro-$p$ groups always have positive measure, see \cref{lem:open_pos_measure}. In \cref{sec:openizers_profinite} we will show that openizers in general profinite groups do not need to be neither closed nor of positive measure. At the same time, we will prove that openizers in pro-$p$ groups can have arbitrarily small positive measure.

\begin{propABC}
    For any $\varepsilon >0$ there exists a pro-$p$ group $G_\varepsilon$ and an element $x\in G_\varepsilon$ such that $0<\mu(\mathbb{O}_x) <\varepsilon$.
\end{propABC}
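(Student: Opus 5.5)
Take $G_\varepsilon=\mathbb{Z}_p^n$ for $n$ large (or $C_p\times\mathbb{Z}_p$, depending on what is easiest), and choose $x$ so that its openizer is nonempty but lies inside a small coset. Open subgroups of $\mathbb{Z}_p^n$ have rank $n$, so for $n\ge 3$ no pair generates an open subgroup. We therefore work with groups of rank at most 2. The cleanest choice is $G=\mathbb{Z}_p^2$ with Haar measure normalized to total mass $1$.

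Fix $x=(1,0)$. Then $\langle x,y\rangle$ with $y=(a,b)$ is open if and only if $b\neq 0$, so $\mu(\mathbb{O}_x)=1$, which is too large. To shrink the openizer, we instead take $x=(p^k,0)$ and enlarge the group so that "open" forces a strong condition on $y$. Passing to a subgroup does not help, because open is tested against $G$ itself. A better idea is a group $G$ of rank 2 together with an $x$ for which openness requires $y$ to lie in a specific coset of a deep open subgroup.

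Concretely, take $G=\mathbb{Z}_p\times C_{p^k}$ and $x=(1,0)$. Here every closed subgroup of finite index is open. The subgroup $\langle x,y\rangle$ with $y=(a,c)$ is open if and only if it has finite index, which always holds because $\langle x\rangle$ already has finite index. That again gives measure $1$.

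So we need a genuinely different mechanism: a group in which $x$ alone generates a small subgroup and openness requires $y$ to be nearly a generator of the complement. Take $G=\mathbb{Z}_p^2$ and $x$ arbitrary nonzero, say $x=(1,0)$. Then $\mathbb{O}_x=\{(a,b):b\neq0\}$ has full measure. Abelian examples of rank 2 thus have openizers of full measure, and we must use a nonabelian, non-uniform group.

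Plan: let $G=\mathbb{Z}_p\rtimes C_{p^k}$, or better the pro-$p$ group of maximal class from \cref{Sec:maxClass}. More simply, use a free pro-$p$ product $C_{p^k}\ast C_p$ and its finite quotients, together with elements $x$ of finite order $p$. Then $\langle x,y\rangle$ is open if and only if $y$ generates $G$ modulo $\Phi$-type constraints. For instance, in $G=C_p\wr\mathbb{Z}_p$ with $x$ a nontrivial base element, $\langle x,y\rangle$ is open roughly when $y\notin$ the base group and $x$ is "cyclic" for $y$. Openness for a fixed $y$ of the form $(v,t^{m})$ then depends on $m$: the image $\langle t^m\rangle$ has index $p^{v_p(m)}$, and $x$ must generate an open $\mathbb{F}_p[[t^m]]$-submodule of the base. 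This reduces the count to $y$ with $v_p(m)$ large. With $x$ chosen to be a suitable element of the base, for instance $x=1+t+\dots$ or something divisible by a high power of $(t-1)$, openness may force $m$ to be a unit together with extra congruence conditions.

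The main obstacle is choosing $x$ so that the set of good $y$ is nonempty yet has measure less than $\varepsilon$. I would first try to compute $\mathbb{O}_x$ directly as $x$ varies over the base group, and use the Weierstrass-type structure of $\mathbb{F}_p[[T]]$-modules to express the openness condition in terms of congruences on $y$.
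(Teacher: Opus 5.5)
Your proposal has a genuine gap: it never fixes a pair $(G_\varepsilon,x)$, and your final paragraph lists as an open ``obstacle'' exactly what the statement asks you to produce. Your observation that abelian groups are useless is correct. However, the nonabelian candidates you float also fail, because their non-empty openizers have measure bounded below by $1-1/p$.

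Take $C_p\wr\mathbb{Z}_p\cong\mathbb{F}_p[\![X]\!]\rtimes\langle t\rangle$, let $0\neq x=f$ lie in the base $B$, and let $y=(v,t^m)$ with $m\neq 0$. Since $\langle y\rangle\cap B=1$, one gets $\langle x,y\rangle\cap B=\mathbb{F}_p[\![(1+X)^m-1]\!]\,f$.
\begin{itemize}
\item If $m\in\mathbb{Z}_p^\times$, then $(1+X)^m-1$ has $X$-adic valuation $1$. So the intersection is the open ideal $\mathbb{F}_p[\![X]\!]f$, and since $\overline{\langle t^m\rangle}=\langle t\rangle$, the subgroup $\langle x,y\rangle$ is open with no further congruence condition.
\item If $v_p(m)=k\ge 1$, the intersection is $\mathbb{F}_p[\![X^{p^k}]\!]f$. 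This is a cyclic submodule of the free $\mathbb{F}_p[\![X^{p^k}]\!]$-module $B$ of rank $p^k$, hence of infinite index.
\end{itemize}
So $\mu(\mathbb{O}_x)=1-1/p$ exactly, and your expectation that the good $y$ have large $v_p(m)$ is backwards. Similarly, \cref{Sec:maxClass} shows that every non-empty openizer in the pro-$p$ group of maximal class has measure at least $1-1/p$. For $p$ odd, $\mathbb{Z}_p\rtimes C_{p^k}=\mathbb{Z}_p\times C_{p^k}$, which you already treated.

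The missing idea is that $G_\varepsilon$ must vary with $\varepsilon$, and openness of $\langle x,y\rangle$ must force the image of $y$ in a finite quotient into a subset of small proportion. A cyclic top group cannot do this, since a generator acts transitively with probability $1-1/p$; you need a top group in which transitive elements are rare.

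The paper takes $G=\mathbb{Z}_p\wr W_n=\mathbb{Z}_p^m\rtimes W_n$ with $m=p^n$, $W_n$ a Sylow $p$-subgroup of $S_{p^n}$, and $a=(1,0,\dots,0)$. The argument runs as follows.
\begin{itemize}
\item For an $m$-cycle $w\in W_n$, the conjugates of $a$ under powers of $w$ span $\mathbb{Z}_p^m$. So $\mathbb{O}_a\neq\emptyset$, and $\mu(\mathbb{O}_a)>0$ by \cref{lem:open_pos_measure}.
\item Let $y=(h_1,\dots,h_m)\sigma$ with $\sigma$ not an $m$-cycle. Write $\sigma=\tau_1\tau_2$ with $\tau_1$ the cycle through $1$, of length $u\le m/p$. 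Then $\langle a,y\rangle\le K_1\times K_2$, where $K_1$ is supported on the $u$ points moved by $\tau_1$ and $K_2$ is procyclic.
\item Hence $\langle a,y\rangle\cap\mathbb{Z}_p^m$ has rank at most $u+1<m$ (for $n\ge 2$), so it is not open.
\item Therefore $\mathbb{O}_a$ lies in the union of the cosets $\mathbb{Z}_p^m\sigma$ with $\sigma$ an $m$-cycle.
\item Induction along $W_n=C_p\wr W_{n-1}$ shows that the proportion of $p^n$-cycles in $W_n$ is $(1-1/p)^n$, which is below $\varepsilon$ for $n$ large.
\end{itemize}
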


The previous proposition is proved in \cref{prop:small}.
We believe that it would be interesting to further study openizers in pro-$p$ groups. For instance, to best of our knowledge the following is open.

\begin{question}
    Is there a pro-$p$ group $G$ such that there exists a sequence of elements $x_n\in G$ with $0<\mu(\mathbb{O}_{x_n})<\frac{1}{n}$?
\end{question}

\comm{
\subsection*{Organisation of the paper} 
After proving some generalities about the openizer of an element in profinite groups, we establish part (1) of \cref{thm:A} which is referred as \cref{cor:barnea_larsen}. We then produce some examples where openizers have exotic properties (e.g.\ not closed and dense). In \cref{sec:p-adic-analytic}, we deal with $p$-adic analytic groups. Here we obtain proofs of parts $(2)$--$(4)$. In \cref{sec:Lie-algebra-methods} we prove part $(8)$. The Nottingham groups are treated in \cref{sec:Nottingham}. Part (6) is established in \cref{sec:WreathProcuts}. In the last section we discuss on pro-$p$ groups with small openizers and we prove part $(5)$.
}

\subsection*{Notation}
We denote by $\mathbb{N}$ the set of positive integers. As it is customary when working with profinite groups, unless stated otherwise, we will assume that all subgroups are closed, all homomorphism are continuous and generation will be intended in the topological sense. 

The word `measure' will always refer to the Haar measure in the relevant locally compact group. On a profinite group $G$, we will always consider the normalised Haar measure $\mu$ such that $\mu(G)=1$.

\subsection*{Acknowledgments}
{\color{red}We would like to thank J.\ Moritz Petschik for spotting a mistake in an previous version of the article.}
\comm{

%%%%%%%%%%%%%%%%%%%%%%%%%%%%%%%%%%%%%%%%%

\section{Introduction}
\subsection{State of the art}
The virtually generating graph $\Gv(G)$ of a topological group $G$ is defined with vertex set $G$ and where two elements are connected if and only if they topologically generate an open subgroup of~$G$.
We consider the subgraph $\Dv(G)$ of $\Gv(G)$ obtained by removing its isolated vertices. This graph was suggested to Lucchini by Barnea as a generalisation of the generating graph of a group~$G$, i.e.\ a graph whose vertex set is given by the elements of~$G$ and where two elements are adjacent if and only if they generate (topologically) the entire group~$G$. 
We denote the generating graph of a group~$G$ by $\Gamma(G)$ and the subgraph obtained by removing the isolated vertices by~${\Delta(G)}$.
Lucchini proved that there exists a $2$-generated profinite group~$G$ with the property that both~${\Delta(G)}$ and~${\Dv(G)}$ have~$2^{\aleph_0}$ connected components, see~\cite{Luc20,Luc21}. On the contrary, the behaviour of the generating graph and the virtually generating graph is different for prosoluble groups. In fact, in~\cite{Luc21}, Lucchini proved that for every positive integer $t$ there exists a finitely generated prosoluble group $G$ such that the associated graph ${\Dv(G)}$ has precisely $t$ connected components, while in~\cite{Luc20} he proved that for every prosoluble group ${\Delta(G)}$ is connected of diameter at most~$3$.

 Mori, in his master thesis \cite{Mor22} investigated the virtually generating graphs of pronilpotent groups. Since pronilpotent groups are Cartesian products of their Sylow pro-$p$ subgroups, the investigation on the connectedness of ${\Dv(G)}$ reduces to the pro-$p$ case, see \cite[Proposition 1.3.3]{Mor22}.

 %In fact, it is easy to prove that a vertex $x$ in a pronilpotent group $G$ is not isolated in~$\widetilde{\Gamma(G)}$ if and only if for every prime $p$, $\pi_p(x)$ is not isolated in the virtually generating graph of the Sylow pro-$p$ subgroup $P$, where $\pi_p$ is the projection of $G$ onto $P$, and for only a finite number of primes, the component $\pi_p(x)$ of $x$ is isolated in the generating graph of the corresponding Sylow pro-$p$ subgroup $P$.
 
In the realm of pro-$p$ groups, Mori proved that for every $2$-generated pro-$p$ group, the graph ${\Delta(G)}$ is connected of diameter $2$, see\ \cite[Proposition~1.3.2]{Mor22}.

Regarding the graph ${\Dv(G)}$, he considered finitely generated abelian pro-$p$ groups and free 2-generated pro-$p$ groups. For the free pro-$p$ group $\mathbb{Z}_p$ of rank 1, it is clear that $\Dv(\mathbb{Z}_p)$ is a complete graph. For the free pro-$p$ group $F_2$ of rank $2$, it is easy to see that $\Dv(F_2)$ coincides with $\Delta(F_2)$ and this is connected of diameter 2. Finally, the graph $\Dv(F_d)$ has no edges for the free pro-$p$ group $F_d$ of rank $d$ with $d\ge 3$.  %when it is one, its diameter is 1\commandrea{I find it strange to write that the diameter is 1; perhaps it is more natural to say that the graph is complete}. In all other cases, the virtually generating graph has no edges.

Finally, Mori considered the Nottingham group $\mathcal{N}_p$ for $p\geqslant 5$ in \cite{Mor22}, showing that the graph $\Gv(\mathcal{N}_p)$, excluding the identity, is connected with diameter of at most $4$.

This raises the following question.

\begin{question}[Main Question] Let $G$ be a pro-$p$ group. Is the virtually generating graph $\Dv(G)$ always connected? If so, is its diameter universally bounded?
\end{question}
 
In the present article we positively answer this question for several pro-$p$ groups.

\subsection{Main results}
%In this article we will consider various classes of pro-$p$ groups and we will show that $p$-adic analytic groups and some other pro-$p$ groups. 
We recall the notion of \emph{lower rank} of a profinite group: a profinite group $G$ has lower rank at most $ r$, written $\mathrm{lr}(G)\le r$, if for all $H\le_o G$ there exists $L\le_o G$ with $L\le H$ and $\mathrm{d}(L)\le r$. We collect in the next theorem the main results of this article.

%We could consider 2-generated with "big" lower rank. If not $p$-adic analytic, we could consider the two generated $C_p\wr\mathbb{Z}_p$ of infinite rank.
%Note that if the $\mathrm{lr}(G)\ge $

\begin{theorem}\label{thm:A}
The following (virtually) pro-$p$ groups have connected virtually generating graph of finite diameter:
\begin{enumerate}
    \item $p$-adic analytic groups with lower rank at most $2$ and measure of torsion elements equal to zero.
    \item compact subgroups of $\mathcal{G}(F)$, where $\mathcal{G}$ is a semisimple algebraic group and $F$ is a non-archimedean local field of characteristic $p$.
    \item %the pro-$p$ group of maximal class. 
    certain virtually-abelian $p$-adic analytic groups.
    \item the wreath product $C_p \wr \mathbb{Z}_p$.
    \item the Nottingham group $\mathcal{N}= \mathcal{N}_p$.
    \item the groups $\mathcal{Q}^1(s,r)$ from \cite{Ers04}.
\end{enumerate}
\end{theorem}

We note that, by a recent result of Toti \cite{Tot22}, every $p$-adic analytic group which is not virtually-solvable has torsion of measure zero.

We remark that in \cref{thm:A} all groups mentioned have virtually generating graph of diameter equal to $2$, apart from the groups in the first item. Actually we will show that the diameter of $\Dv(U)$ for a non-virtually-cyclic uniform pro-$p$ group $U$ is exactly $2$ (see \cref{cor:graph_uniform}). On the other hand, it seems to be a very difficult task to calculate the exact diameter for the virtually generating graph of a solvable $p$-adic analytic group with torsion, even for virtually-abelian pro-$p$ groups; see \cref{sec:small} for more details. 

In the study of the virtually generating graph it is natural to introduce a set that measures how `easy' it is to generate an open subgroup together with a fixed element. Let $G$ be a profinite group and $x\in G$. The \emph{openizer} of $x$ is  $$\mathbb{O}_x = \{y\in G  \mid \langle x,y\rangle \le_o G\}.$$
After proving some generalities about the openizer of an element in profinite groups, we produce some examples where openizers have exotic properties (e.g.\ not closed and dense). Then, we start a systematic study the possible measures that openizers can take in pro-$p$ groups.  First, we obtain lower bounds for the measure of openizers of all groups in~\cref{thm:A}, showing that in these groups openizers are `large'.

On the other hand, it easy to see that openizers of non-isolated elements in pro-$p$ groups always have positive measure. However, we will prove that openizers in pro-$p$ groups can have arbitrarily small measure, see \cref{prop:small}.

\begin{proposition}
    For any $\varepsilon >0$ there exists a pro-$p$ group $G_\varepsilon$ and an element $x\in G_\varepsilon$ such that $0<\mu(\mathbb{O}_x) <\varepsilon$.
\end{proposition}

%It turns out that working out explicitly the structure of the virtually generating graph of a pro-$p$ group is very challenging, even for virtually-abelian groups (cfr.\ \cref{sec:small}), and 
We believe it would be interesting to further study openizers in pro-$p$ groups. For instance, to best of our knowledge the following is open.

\begin{question}
    Is there a pro-$p$ group $G$ such that there exists a sequence of elements $x_n\in G$ such that $0<\mu(\mathbb{O}_{x_n})<\frac{1}{n}$?
\end{question}

\subsection*{Where to find the results of \cref{thm:A}}  Part~(1) is \cref{prop:padic}. Part~(2) is \cref{cor:barnea_larsen}. The groups of Part~(3) are in \cref{ex:Q8}, \cref{Sec:maxClass}, and \cref{prop:Z2wrD4}. Part~(4) is \cref{prop:CpwrZp}. Part~(5) is \cref{prop:Nott}. Part~(6) is \cref{cor:groupsQ1}.

\subsection*{Notation}
We denote by $\mathbb{N}$ the set of positive integers. As it is customary when working with profinite groups, unless stated otherwise, we will assume that all subgroups are closed, all homomorphism are continuous and generation will be intended in the topological sense.

%\section*{Acknowledgments}
%The second author...

%The third author is a member of GNSAGA (INdAM) and kindly acknowledges their support.
 
% The fourth author is funded by the Italian program Rita Levi Montalcini for young researchers, Edition 2021.

}

\comm{

\section{Preliminaries}
Let $G$ be a profinite group $G$. If $\widetilde{\Gamma(G)}$ is not the empty graph, then $G$ is finitely generated. In fact, if $\langle x,y\rangle\leqslant_o G$, then there exists $N\trianglelefteq_o G$ such that $N\subseteq \langle x,y\rangle$. Then there exist $d\in \mathbb{N}$ and $x_1,\dots,x_d$ elements of $G$ such that $x_1N,\dots,x_dN$ generate~$G/N$ which is finite. Thus $G=\langle x_1,\dots, x_d,x,y\rangle$, cf.\ \cite[Section 1.1]{Mor22}. A pro-$p$ group is finitely generated if and only if the quotient $G/\Phi(G)$ is a vector space of finite dimension over~$\mathbb{F}_p$, where $\Phi(G)$ is the Frattini subgroup of $G$ which for pro-$p$ groups equals to~$G^p[G,G]$. Hence the Frattini subgroup is open in $G$. Moreover, the dimension of $G/\Phi(G)$ equals the minimal number $d(G)$ of generators of $G$.

%%%%%%%%%%%%%%%%%%%%%%%%%%%%%%%%%%%%%%%%%%%%%%%%%%%%%%%%

\section{Examples}
Virtually cyclic
\begin{example}
Let $G$ be the pro-$2$ completion of the infinite Dihedral group, which we describe as $\mathbb{Z}_2\rtimes \mathbb{Z}/2\mathbb{Z}$, where~$\mathbb{Z}_2$ is the ring of $2$-adic integers and $ \mathbb{Z}/2\mathbb{Z}$ is a cyclic group of order $2$. Then $\Gamma (G)$ is connected of diameter $2$. We write $z$ as the generator of $\mathbb{Z}_2$ and $a$ as the generator of $\mathbb{Z}/2\mathbb{Z}$. We then have the relations $a^2=1$ and $ z^a=z^{-1}$.
Every element of $G$ can be written with a normal form $z^\alpha a^\beta$ for $\alpha \in \mathbb{Z}$ and $\beta\in \{0,1\}$. We have that the identity element $1_G$ of $G$ is connected to every element $z^\alpha $ for $\alpha\neq 0$ and $z^\alpha a$ for $\alpha\notin\{0,1\}$ since $\langle z^\alpha \rangle$ is open in $G$ for $\alpha\neq 0$. The element $za$ has order two so it does not generate an open subgroup of $G$. Although $1_G$ is not connected to the element $a$ and to the element $za$, we have that $a$ is connected to every element~$z^\alpha a^\beta$ for $\alpha\neq 0$ and $\beta\in \{0,1\}$ and $za$ is connected to $a$, to $z$ and to every element of the form $z^\alpha a^\beta$ for $\alpha\notin \{0,1\}$ and $b\in \{0,1\}$. So we have that $\Gamma(G)$ is connected with diameter $2$.
\end{example}
$2$-generated metabelian pro-$p$ groups. By
\cite[Lemma 6.5]{QSV22}, every pro-$p$ group $G_{\alpha,\beta}$ defined by 
\[
\langle x,y \mid [x,y]=x^\alpha y^\beta\rangle\]
with $\alpha,\beta\in p^{1+\varepsilon}\mathbb{Z}$ ($\varepsilon=0$ if $p$ is odd, and $\varepsilon=1$ if $p$ is $2$) is isomorphic to the group~$G_{p^c,0}$ where~$c=\min\{v_p(\alpha),v_p(\beta)\}$.

\begin{example}
Let $p$ be an odd prime and let $\mathbb{Z}_p$ be the ring of $p$-adic integers. The automorphism group $\Aut(\mathbb{Z}_p)$ is isomorphic to the multiplicative group \[\mathbb{Z}_p^\ast=\mathbb{Z}/(p-1)\mathbb{Z}\times (1+p\mathbb{Z}_p).\] Consider $\varphi_a\in \Aut(\mathbb{Z}_p)$ and let $G_{\varphi_a}=\mathbb{Z}_p\rtimes_{\varphi_a} \mathbb{Z}_p$ be the semidirect product defined by~$\varphi_a$. We can describe 
\begin{align*}
 \varphi_a\colon& \mathbb{Z}_p \to \Aut(\mathbb{Z}_p)\cong \mathbb{Z}_p^\ast\\
 &1\mapsto (1\mapsto a)\mapsto a.
\end{align*}
Let $a\in (1+p\mathbb{Z}_p)$. Then we can consider the following presentation of $G$
\[\langle x,y\mid [x,y]=x^\alpha\rangle,\]
where $\alpha\in p\mathbb{Z}_p$ and $a=1+\alpha$.
\end{example}
Pro-$p$ groups of finite rank are compact $p$-adic analytic groups.

}

%%%%%%%%%%%%%%%%%%%%%%%%%%%%%%%%%%%%%%%%%%%%%%

\section{Openizers in profinite groups}\label{sec:openizers}

\comm{
Let $G$ be a profinite group. If ${\Dv(G)}$ is not the empty graph, then $G$ is finitely generated. In fact, if $\langle x,y\rangle\leqslant_o G$, then there exists $N\trianglelefteq_o G$ such that $N\subseteq \langle x,y\rangle$. Then there exist $d\in \mathbb{N}$ and $x_1,\dots,x_d$ elements of $G$ such that $x_1N,\dots,x_dN$ generate~$G/N$ which is finite. Thus $G=\langle x_1,\dots, x_d,x,y\rangle$, cf.\ \cite[Section 1.1]{Mor22}.\commargherita{This is probably out of place} 
}

\subsection{Basics on openizers}

\begin{lemma}For every $g\in G$, the subset $\mathbb O_g$ is measurable.
\end{lemma}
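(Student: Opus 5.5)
We will show that $\mathbb{O}_g$ is a Borel set; Borel sets are Haar measurable. We may assume $G$ is finitely generated. Otherwise no two elements generate an open subgroup: if $\langle x,y\rangle\le_o G$ then $\langle x,y\rangle\supseteq N$ for some $N\trianglelefteq_o G$, and $G=\langle x_1,\dots,x_d,x,y\rangle$ with $x_iN$ generating the finite group $G/N$. In that case $\mathbb{O}_g=\emptyset$, which is measurable.

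The key observation is that a closed subgroup $H\le G$ is open if and only if $H$ contains some open normal subgroup $N$. Since $\langle g,y\rangle N$ is closed and $H$ is closed, this is equivalent to the statement that $\langle g,y\rangle$ contains $N$. Hence
\[
\mathbb{O}_g=\bigcup_{N\trianglelefteq_o G}\{y\in G \mid N\le \langle g,y\rangle\}.
\]
When $G$ is finitely generated there are only countably many open normal subgroups, since for each index there are finitely many. So it suffices to show that each set $A_N=\{y \mid N\le\langle g,y\rangle\}$ is Borel.

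To handle $A_N$ we use that $N$ is itself finitely generated, being open in a finitely generated group. Fix a countable base of open normal subgroups $M_1\ge M_2\ge\cdots$ of $G$ with trivial intersection; such a chain exists because $G$ is countably based. Because $\langle g,y\rangle$ is closed, we have
\[
N\le \langle g,y\rangle \iff N\le \langle g,y\rangle M_k \text{ for all } k.
\]
This holds because $\bigcap_k \langle g,y\rangle M_k=\langle g,y\rangle$ for closed subgroups. Whether $N\le\langle g,y\rangle M_k$ holds depends only on the coset $yM_k$, since in $G/M_k$ the condition reads $NM_k/M_k\le\langle gM_k,yM_k\rangle$. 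So each set $\{y \mid N\le\langle g,y\rangle M_k\}$ is a finite union of cosets of $M_k$, and is therefore clopen. It follows that $A_N=\bigcap_k(\text{clopen})$ is closed, and $\mathbb{O}_g$ is a countable union of closed sets, i.e.\ an $F_\sigma$ set, hence Borel and measurable.

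The main point needing care is the identity $\bigcap_k HM_k=H$ for closed $H$, which is standard for profinite groups. The other point is that countability of the family of open normal subgroups uses finite generation, which is why we reduce to that case first. In fact, finite generation is not even essential: one can instead use that $G$ has only countably many open normal subgroups in the relevant countably based setting. If $G$ is not second countable, we will argue directly that $A_N$ is closed, as above, and note that the union over all $N$ equals the union over a family indexed by finite quotients.
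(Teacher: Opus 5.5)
Your proof is correct, and it reaches the same conclusion as the paper (that $\mathbb{O}_g$ is an $F_\sigma$ set) through a different decomposition.

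The paper writes $\mathbb{O}_g$ as the disjoint union, over open subgroups $H\ni g$, of the sets $\Omega_H(g)=\{h\in H\mid \langle h,g\rangle=H\}$. It gets closedness in one line: $H\setminus\Omega_H(g)$ is the union of the maximal open subgroups of $H$ containing $g$, hence open.

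You instead take the overlapping union, over open normal subgroups $N$, of the sets $A_N=\{y\mid N\le\langle g,y\rangle\}$. You prove each $A_N$ is closed by testing the containment in the finite quotients $G/M_k$ and using $\bigcap_k HM_k=H$ for closed $H$. This avoids maximal subgroups altogether. It is a little longer, but it works purely with finite quotients.

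One point in your favour: the paper's step ``a union of closed subsets, and therefore it is measurable'' silently requires the index family to be countable. Your preliminary reduction supplies exactly this: if $G$ is not finitely generated then $\mathbb{O}_g=\emptyset$, and otherwise $G$ has only countably many open subgroups. That reduction justifies the countability for your decomposition and for the paper's alike.

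Some tidying is in order:
\begin{itemize}
\item The remark that $\langle g,y\rangle N$ is closed is never used.
\item The finite generation of $N$ is never used either.
\item The last paragraph should be dropped. The non-finitely-generated case is already disposed of by the first step. Moreover, a union over an uncountable family of closed sets would not be known to be measurable, so the alternative sketched there does not stand on its own.
\end{itemize}
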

\begin{proof}Let $\mathcal H$ be the set of the open subgroups $H$ of $G$ containing the element $g$. For $H\in \mathcal H,$ let $\Omega_H(g)=\{h\in H\mid \langle h,g\rangle =H\}.$ Notice that $H\setminus \Omega_H(g)$ is the union of the maximal open subgroup of $H$ containing $g$. Hence $H \setminus \Omega_H(G)$ is open in $H$ and consequently $\Omega_H(G)$ is closed. Hence
$\mathbb O_g=\cup_{H\in \mathcal H}\Omega_H(g)$ is a union of closed subsets, and therefore it is measurable.
\end{proof}

\comm{

\begin{lemma}\label{lem:open_pos_measure}
    The measure of the openizer of a non-isolated element in a pro-$p$ group is always positive.
\end{lemma}
\commandrea{We have a similar statement if $G$ is virtually pro-$p$; we consider the Frattini subgroup $F$ of $O_p(H)$ and notice that the openizer of $g$ contains a coset of $F$}
\begin{proof}
Let $G$ be a pro-$p$ group and $g\in G$. If $\mathbb O_g\neq \emptyset$, then there exists $x\in G$ such that $\langle g,x\rangle$ is an open subgroup of $G$. Put $H=\langle g,x\rangle$. {\color{blue}Working modulo the Frattini subgroup of $H$, it is easy to see that} the probability that $\langle g, h\rangle=H$ for $h\in H$ is $1-\frac{1}{p}$. It follows that
$$\mu(\mathbb O_g)\geq \left(1-\frac{1}{p}\right)\frac{1}{|G:H|}. \qedhere$$ 
\end{proof}

}

\begin{lemma}\label{lem:open_pos_measure}
The openizer of a non-isolated element in a virtually pro-$p$ group is always open, and therefore has positive measure.
\end{lemma}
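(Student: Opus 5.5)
Let $G$ be virtually pro-$p$ and $g\in G$ non-isolated. The plan is to show that $\mathbb{O}_g$ contains an open neighbourhood of each of its points, which gives openness. Positivity of the measure then follows because a nonempty open subset of a profinite group contains a coset of an open normal subgroup, and such a coset has measure $1/|G:N|>0$.

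Fix $x\in\mathbb{O}_g$ and put $H=\langle g,x\rangle\le_o G$. Since $G$ is virtually pro-$p$, it has an open normal pro-$p$ subgroup $P$. Then $K:=H\cap P$ is an open normal pro-$p$ subgroup of $H$, and in fact $K\trianglelefteq H$ since $P\trianglelefteq G$. Let $F=\Phi(K)$ be its Frattini subgroup. Being characteristic in $K$, $F$ is normal in $H$. I will arrange that $F$ is open in $K$ by restricting to the relevant case: $G$ finitely generated as a profinite group. This holds because $\mathbb{O}_g\neq\emptyset$: a finite generating set of $G$ modulo an open normal subgroup contained in $H$, together with $g$ and $x$, generates $G$. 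Then $K$, being open in $G$, is a finitely generated pro-$p$ group, so $F$ is open in $K$, hence open in $G$.

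The key claim is that $xF\subseteq\mathbb{O}_g$; indeed I will show $\langle g,xf\rangle=H$ for every $f\in F$. Put $L=\langle g,xf\rangle\le H$. Then $LF\supseteq\langle g,x\rangle F=H$, so $LF=H$. Hence $L\cap K$ satisfies $(L\cap K)F=K$: for $k\in K$ write $k=lf'$ with $l\in L$ and $f'\in F\le K$, so $l\in K$. By the Frattini property of the pro-$p$ group $K$ (a closed subgroup supplementing $\Phi(K)$ is all of $K$), we get $L\cap K=K$. Thus $K\le L$, and then $LF=H$ with $F\le K\le L$ gives $L=H$. So $xF$ is an open neighbourhood of $x$ inside $\mathbb{O}_g$.

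The main point to check carefully is the step $(L\cap K)F=K\Rightarrow L\cap K=K$. This requires $F=\Phi(K)$ with $K$ pro-$p$, which is exactly why I pass to $K=H\cap P$ rather than working in $H$ directly, since $H$ itself need not be pro-$p$. With that in place, the measure bound $\mu(\mathbb{O}_g)\ge\mu(xF)=1/|G:F|>0$ is immediate.
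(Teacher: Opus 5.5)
Your proof is correct and follows the same route as the paper: take an open normal pro-$p$ subgroup $K$ of $H=\langle g,x\rangle$, note that $F=\Phi(K)$ is open because $K$ is finitely generated, and show that $xF\subseteq\mathbb{O}_g$. The only differences are that you use $K=H\cap P$ where the paper uses $O_p(H)$, and that you check $\langle g,xf\rangle=H$ directly from the Frattini property of $K$, whereas the paper relies on $F\le\Phi(H)$ (its text says ``Frattini subgroup of $G$'', which is a slip: $\Phi(H)$ is what is needed and what is true, and your direct argument avoids the issue).
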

\begin{proof}
Let $G$ be a virtually pro-$p$ group and $g\in G$. If $\mathbb O_g\neq \emptyset$, then there exists $x\in G$ such that $\langle g,x\rangle$ is an open subgroup of $G$. Put $H=\langle g,x\rangle$. Since $G$ is virtually pro-$p$, $H$ is also virtually pro-$p$. In particular, $K=O_p(H)$ is open in $G$ and finitely generated. Thus the Frattini subgroup $F$ of $K$ is open in $G$. Since $F$ is contained in the Frattini subgroup of $G$, $xF\subseteq \mathbb O_g$. So $\mathbb O_g$ contains an open neighborhood of each of its elements.
\end{proof}

We will show in~\cref{sec:openizers_profinite} that the previous lemma is false for profinite groups and that it is not necessarily true that the openizer of an element is open.

\begin{lemma}\label{lem:big_open} Let $G$ be a pro-$p$ group. 
    \begin{enumerate}
        \item If the set $ B= \{x\in G \mid \mu(\mathbb{O}_x)> 1/2\}$ has measure one, then $\Dv(G)$ is connected and it has diameter at most $4$. Moreover, if $B=G\smallsetminus\{1\}$, then $\Gv(G)\smallsetminus \{1\}$ has diameter at most $2$.
        \item If there is an element $x\in G$ such that $\mu(\mathbb{O}_x)=1$, then $\Dv(G)$ is connected and of diameter at most $2$. 
    \end{enumerate}
\end{lemma}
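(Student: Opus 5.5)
Both parts will use two facts. The first is that adjacency in $\Gv(G)$ is symmetric, i.e.\ $y\in\mathbb O_x$ if and only if $x\in\mathbb O_y$. The second is \cref{lem:open_pos_measure}: in a pro-$p$ group every non-empty openizer is open, hence measurable of positive measure. The elementary measure-theoretic input is that a set of positive measure meets every set of measure one, and that two measurable sets each of measure $>1/2$ intersect in a set of positive measure. In particular their intersection contains more than any prescribed finite set of points, since points have measure zero unless $G$ is finite, and the finite case can be handled trivially.

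For part (1), let $u,v$ be non-isolated vertices. Then $\mathbb O_u$ and $\mathbb O_v$ are non-empty, so they have positive measure, and since $\mu(B)=1$ I can pick $a\in\mathbb O_u\cap B$ and $b\in\mathbb O_v\cap B$. Because $\mu(\mathbb O_a),\mu(\mathbb O_b)>1/2$, there is some $c\in\mathbb O_a\cap\mathbb O_b$. This gives a walk $u - a - c - b - v$ in $\Dv(G)$, where every vertex is non-isolated because it has a neighbour. Hence $\Dv(G)$ is connected of diameter at most $4$. For the "moreover" statement, suppose $B=G\smallsetminus\{1\}$ and take $x\neq y$ in $G\smallsetminus\{1\}$. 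Then $\mathbb O_x\cap\mathbb O_y$ has positive measure, so it contains an element $z\notin\{1,x,y\}$. This $z$ is a common neighbour of $x$ and $y$ inside $\Gv(G)\smallsetminus\{1\}$, giving distance at most $2$. (If the only available common neighbours were $x$ or $y$ themselves, the two vertices would already be adjacent.)

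For part (2), the naive argument only gives diameter $4$. Picking $a\in\mathbb O_u\cap\mathbb O_x$ and $b\in\mathbb O_v\cap\mathbb O_x$, which is possible since $\mathbb O_x$ has full measure and $\mathbb O_u,\mathbb O_v$ have positive measure, yields the walk $u-a-x-b-v$. To get diameter $2$, I will aim to upgrade the hypothesis to the situation of part (1): namely, to show that every non-isolated $u$ satisfies $\mu(\mathbb O_u)>1/2$ (in fact I expect $\mu(\mathbb O_u)=1$). The same intersection argument as in part (1) then gives a common neighbour of any two non-isolated vertices.

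The plan for this upgrade is as follows. Since $\mathbb O_x$ is open of measure one, its complement $C$ is a closed null set, hence nowhere dense. Take a non-isolated $u$ with $H=\langle u,a\rangle$ open. Working modulo an open normal subgroup $N\le\Phi(H)$, as in the proof of \cref{lem:open_pos_measure}, whether $\langle u,y\rangle$ is open depends only on the image of $y$ in the finite group $G/N$ together with the Frattini argument. I then compare $\mathbb O_u$ with $\mathbb O_x$ through the finite quotient $G/\Phi(G)$: the hypothesis $\mu(\mathbb O_x)=1$ forces $d(G)\le 2$ and severely restricts which maximal subgroups can contain $\langle x,y\rangle$. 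This step, transferring the full-measure property from $x$ to an arbitrary non-isolated $u$, is where I expect the main difficulty. If it fails in general, I will fall back on finding directly, for non-isolated $u,v$, an element of $\mathbb O_u\cap\mathbb O_v$ inside the dense open set $\mathbb O_x$ via a coset $aF\subseteq \mathbb O_u$ as in \cref{lem:open_pos_measure}.
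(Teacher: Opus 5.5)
Your part (1) is correct and is essentially the paper's argument. The paper shows that any two elements of $B$ have a common neighbour, then attaches each non-isolated vertex to $B$ via \cref{lem:open_pos_measure}; this is your walk $u-a-c-b-v$. Your extra care that the common neighbour in the ``moreover'' clause avoids $1$ is fine.

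Part (2) is not proved, and the step you single out as the main difficulty cannot be carried out, because the conclusion of (2) is false as stated. From $\mu(\mathbb{O}_x)=1$ one only gets what you call the naive argument: every non-isolated $u$ has $\mathbb{O}_u\cap\mathbb{O}_x\neq\emptyset$, so $u$ is within distance $2$ of $x$, and $\Dv(G)$ is connected of diameter at most $4$. The paper's own proof of (2) is ``$\mathbb{O}_x\cap\mathbb{O}_y\neq\emptyset$ for every $y$, then argue as in part one''. It yields exactly this bound and nothing more.

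A counterexample is the group $G=\mathbb{Z}_2\wr D_4=V\rtimes D_4$ of \cref{prop:Z2wrD4}, with $x=(1234)$.
\begin{itemize}
\item If $y=w\sigma$ with $\sigma=x^k$, then $w=yx^{-k}\in\langle x,y\rangle$. The vector $w$ and its conjugates by $x$ span a finite-index sublattice of $V$ unless the circulant determinant of $w$ vanishes.
\item If $\sigma$ is a reflection, then $\langle x,y\rangle$ maps onto $D_4$, so $\langle x,y\rangle\cap V$ is a $\mathbb{Z}_2[D_4]$-submodule. For generic $w$, the elements $y^2$ and $(xy)^2$ together have non-zero components in each of the three pairwise non-isomorphic irreducible summands of $\mathbb{Q}_2\otimes V$.
\end{itemize}
Hence $\mu(\mathbb{O}_x)=1$. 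Alternatively, parts (1) and (2) of \cref{prop:Z2wrD4} together with Fubini give this for almost every $x\in\Sigma_{(1234)}$.

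On the other hand, a non-isolated $g_1\in V$ has $\mathbb{O}_{g_1}\subseteq\Sigma_{(1234)}\cup\Sigma_{(1432)}$, so $\mu(\mathbb{O}_{g_1})\le 1/4$. This rules out your proposed upgrade to $\mu(\mathbb{O}_u)>1/2$ for all non-isolated $u$.

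Now take $g_2=(a,a,0,0)(13)(24)$ with $a\neq 0$, as in part (3) of that proposition. It is non-isolated, but $\mathbb{O}_{g_2}\cap(\Sigma_{(1234)}\cup\Sigma_{(1432)})=\emptyset$. Therefore $\mathbb{O}_{g_1}\cap\mathbb{O}_{g_2}=\emptyset$, so no common neighbour exists, inside $\mathbb{O}_x$ or anywhere else. This rules out your fallback, and $d(g_1,g_2)=3$, as shown there. The most one can prove under the hypothesis of (2) is that $\Dv(G)$ is connected of diameter at most $4$.

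Your intermediate claim that $\mu(\mathbb{O}_x)=1$ forces $d(G)\le 2$ is also false. Take $G=\mathbb{Z}_p^2\times C_p$ and $x$ a basis vector of $\mathbb{Z}_p^2$: then $\mu(\mathbb{O}_x)=1$ but $d(G)=3$.
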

\begin{proof}
(1) For any two elements $x,y\in B$ it is clear that $\mathbb{O}_x \cap \mathbb{O}_y \neq \emptyset$; hence, there exists $z\in G$ connected to both $x$ and $y$. So any two elements in $B$ are at distance at most 2 in $\Dv(G)$.

Consider now a non-isolated element $x \in G$ with $x\notin B$. By~\cref{lem:open_pos_measure}, $\mu(\mathbb{O}_x)>0$, so that $B\cap \mathbb{O}_x \neq \emptyset$ and we can connect $x$ to an element of $B$. It follows that the diameter of $\Dv(G)$ is at most $4$.

(2) By~\cref{lem:open_pos_measure}, we have that $\mathbb{O}_x \cap \mathbb{O}_y \neq \emptyset$ for every $y\in G$. The statement follows with the same argument of the first part. 
\end{proof}

\subsection{Openizers and probability of generating an open subgroup}\label{sec:proof_thm_A}

Let $Q(G,k)$ the probability that $k$ random elements generate an open subgroup in~$G$. The first to introduce the probability $Q(G, k)$ was Avinoam Mann in \cite[pg. 435]{Man96} and it has been extensively studied for profinite groups. We will obtain an equivalent condition for $Q(G,2)=1$ using openizers. 

Let $G$ be a profinite group. Let $L_t=\{ x \in G \mid  \mu(\mathbb{O}_x) \ge t \}$ %Let $C_t= G\smallsetminus L_t$ be its complement.
and notice that $L_1=\bigcap_{n\in \mathbb N}  (L_{\frac{n-1}{n}})$.
%look at Barnea and Larsen

\begin{lemma}
    Let $G$ be a finitely generated profinite group. Then,  for any $t\in [0,1]$, the set $L_t \subseteq G$ is measurable.
\end{lemma}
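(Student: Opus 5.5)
The plan is to show that the function $f\colon G\to[0,1]$, $f(x)=\mu(\mathbb{O}_x)$, is measurable. Then $L_t=f^{-1}([t,1])$ is measurable for every $t\in[0,1]$. Since $G$ is finitely generated, it has only countably many open subgroups. Enumerate them as $H_1,H_2,\dots$ and write
\[
E=\{(x,y)\in G\times G \mid \langle x,y\rangle \le_o G\}=\bigcup_{i\in\mathbb N} E_i,\qquad E_i=\{(x,y)\in G\times G\mid \langle x,y\rangle = H_i\}.
\]
The first task is to check that each $E_i$ is a Borel subset of $G\times G$.

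Step one is the Borel property of $E_i$. We have $(x,y)\in E_i$ if and only if $x,y\in H_i$ and $\langle x,y\rangle$ is not contained in any maximal open subgroup $M$ of $H_i$. Since $H_i$ is finitely generated, it has only countably many (in fact, for each index finitely many) maximal open subgroups $M$. So
\[
E_i=(H_i\times H_i)\smallsetminus \bigcup_M (M\times M),
\]
which is a Borel set, being closed minus a countable union of clopen sets. This mirrors the proof that $\mathbb{O}_g$ is measurable. Consequently $E$ is Borel in $G\times G$, and its indicator function $\mathbf 1_E$ is a Borel function on $G\times G$.

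Step two applies Fubini--Tonelli to the finite product measure $\mu\times\mu$ on the compact group $G\times G$. For each $x$, the section $E_x=\{y \mid (x,y)\in E\}$ is exactly $\mathbb O_x$. Hence $f(x)=\int_G \mathbf 1_E(x,y)\,d\mu(y)$, and Tonelli's theorem gives that $f$ is measurable. Then $L_t=f^{-1}([t,1])$ is measurable, which proves the lemma. As a bonus, $\int_G f\,d\mu=Q(G,2)$, which is presumably why the paper sets this up.

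The main obstacle is the measure-theoretic subtlety in Step two. Tonelli's theorem requires measurability with respect to the product $\sigma$-algebra, and the Haar measure on $G\times G$ might in principle be the completion of something larger. This is why I insist on $E$ being Borel with respect to the product of the Borel $\sigma$-algebras. For a finitely generated profinite group $G$ there are only countably many open normal subgroups, so $G$ is second countable (metrizable). Then the Borel $\sigma$-algebra of $G\times G$ equals the product of the Borel $\sigma$-algebras, and Tonelli applies directly. I expect this to be the one point that needs care; the rest is routine.
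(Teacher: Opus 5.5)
Your proof is correct and follows the paper's argument: apply Fubini--Tonelli to the indicator of $Y=\{(g,h)\mid \langle g,h\rangle\le_o G\}$, whose $g$-sections are the openizers $\mathbb{O}_g$, and take $L_t=f^{-1}([t,1])$. The one difference is in why $Y$ is measurable, and there your justification is the right one: the paper asserts that $Y$ is closed, which fails in general (already in $\mathbb{Z}_p^2$, where $(e_1,p^ne_2)\to(e_1,0)$; more generally, sections of a closed set are closed, while the paper itself notes that openizers in $\mathrm{SL}_2^1(\mathbb{Z}_p)$ are not), whereas your decomposition of $Y$ as a countable union of the closed sets $(H_i\times H_i)\smallsetminus\bigcup_M (M\times M)$ is correct and places $Y$ directly in the product $\sigma$-algebra, so your second-countability remark is not actually needed.
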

\begin{proof}
Let $Y=\{(g,h)\in G\times G \mid \langle g,h\rangle \le_o G\}$. 
\comm{
Since $G$ has only countably many open subgroups we can write 
    \[
     Y=\bigcup_{H\in \mathcal{S}_G} \{(g,h)\in G\times G \mid \langle g,h\rangle =H\} 
    \]
    where $\mathcal{S}_G$ is the set of open subgroups of $G$.

Now, it is easy to see that
\[
  \{(g,h)\in G\times G \mid \langle g,h\rangle =H\} = (H\times H) \smallsetminus \bigcup_{K\in \mathcal{M}_H} M\times M
\]
where $\mathcal{M}_H$ is the set of maximal open subgroups of $H$. Since $\mathcal{M}_H$ is again countable for every $H$, we can conclude that $Y$ is measurable. (Up to here is contained in the book "Subgroup Growth")
}%
 It is standard to see that $Y$ is a closed subset in $G\times G$, see for instance \cite[Chapter~11]{LS03}.
Since $Y$ is measurable, its indicator function $ {1}_Y(g,h)$ is a measurable function and, by Fubini's theorem (\cite[Thm.~8.8(a)]{Rud87}), the function 
\[
  f(g) = \int_G  {1}_Y(g,h) \ d\mu(h) 
\]
is measurable. Note that $1_Y(g,h)=1$ exactly when $h\in \mathbb{O}_g$; hence, $f(g) = \mu(\mathbb O_g)$.

Finally, we have that $L_t = \{g\in G \mid f(g) \ge t\} = f^{-1}([t,1])$ is measurable. 
\end{proof}

\begin{proposition}\label{prop:Q_L_1}
  Let $G$ be a profinite group. Then, $Q(G,2)=1$ if and only if $\mu(L_1)=1$.  
\end{proposition}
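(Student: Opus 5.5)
The plan is to write $Q(G,2)$ as an iterated integral and then use the elementary fact that a measurable function with values in $[0,1]$ has integral $1$ if and only if it equals $1$ almost everywhere. As in the proof of the previous lemma, let
\[
Y=\{(g,h)\in G\times G \mid \langle g,h\rangle\le_o G\}.
\]
By definition, $Q(G,2)=(\mu\times\mu)(Y)$, and $Y$ is closed, hence measurable.

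If $G$ is not finitely generated, then $Y=\emptyset$, since two elements generating an open subgroup forces $G$ to be finitely generated. In that case $Q(G,2)=0$ and $\mathbb{O}_x=\emptyset$ for every $x$, so $L_1=\emptyset$ and both sides of the equivalence are false. We may therefore assume $G$ is finitely generated, so that the previous lemma applies and $L_t$ is measurable for all $t$.

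By Fubini's theorem applied to $1_Y$, exactly as in the previous lemma,
\[
Q(G,2)=\int_G\int_G 1_Y(g,h)\,d\mu(h)\,d\mu(g)=\int_G f(g)\,d\mu(g),\qquad f(g)=\mu(\mathbb{O}_g).
\]
The function $f$ is measurable and $0\le f\le 1$. Moreover $L_1=f^{-1}(\{1\})$.

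Now both directions are short.

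Suppose $\mu(L_1)=1$. Then $f=1$ almost everywhere, so $\int_G f\,d\mu=1$, that is, $Q(G,2)=1$.

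Suppose instead $Q(G,2)=1$. Then $\int_G(1-f)\,d\mu=0$ with $1-f\ge 0$, so $1-f=0$ almost everywhere. Hence $\mu(L_1)=\mu(f^{-1}(\{1\}))=1$.

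For a more hands-on version of the second direction, consider the sets $G\smallsetminus L_{\frac{n-1}{n}}$, on which $1-f>\frac1n$. Markov's inequality shows each of these sets is null. Their union is $G\smallsetminus L_1$, because $L_1=\bigcap_n L_{\frac{n-1}{n}}$, so $G\smallsetminus L_1$ is null as well.

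There is no real obstacle in this argument. The only points needing care are the measurability of $Y$ (so that Fubini applies) and the degenerate case where $G$ is not finitely generated, both handled above.
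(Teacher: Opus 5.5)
Your proof is correct and follows the paper's approach. The paper argues the first direction informally (pick $x$, then $y$). For the converse it uses exactly your hands-on decomposition $G\smallsetminus L_1=\bigcup_n\bigl(G\smallsetminus L_{\frac{n-1}{n}}\bigr)$ to produce a positive-measure set of $x$ with $\mu(\mathbb{O}_x)<\frac{n-1}{n}$. Your identity $Q(G,2)=\int_G\mu(\mathbb{O}_g)\,d\mu(g)$ is the rigorous Fubini form of that reasoning. Your separate treatment of non-finitely generated $G$ also fills a small gap the paper leaves implicit.

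One justification must be corrected: $Y$ is \emph{not} closed in general. You borrowed this claim from the preceding lemma's proof, where it is also wrong. For example, take $G=\mathbb{Z}_p^2$ with standard basis $e_1,e_2$. The pairs $(e_1,p^ne_2)$ lie in $Y$, since they generate $\mathbb{Z}_p\times p^n\mathbb{Z}_p$, but they converge to $(e_1,0)\notin Y$. More conceptually, if $Y$ were closed then every openizer $\mathbb{O}_g$, being a slice of $Y$, would be closed, which contradicts the paper's own examples.

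What is true, and is all Fubini needs, is the following. For finitely generated $G$, the set $Y$ is the union, over the countably many open subgroups $H\le_o G$, of the sets $\{(g,h)\in H\times H \mid \langle g,h\rangle=H\}$. Each of these is closed, since the set of generating pairs of a profinite group is closed. So $Y$ is an $F_\sigma$ set and in particular Borel.
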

\begin{proof}
If $\mu(L_1)=1$, then we pick two random elements $x$ and $y$. With probability $1$ the first element $x$ is in $L_1$ and then with probability $1$ the second element $y$ is in $\mathbb{O}_{x}$. Thus, $Q(G,2)=1$.

Now suppose $Q(G,2)=1$. Assume by contradiction that $\mu(L_1)<1$. Since $L_1=\bigcap_{n\in \mathbb N}  L_{\frac{n-1}{n}}$, there exists $n$ such that $\mu(L_{\frac{n-1}{n}})=1-\varepsilon$ for some positive $\varepsilon$. It follows that, for a random element $x\in G$, with probability $\varepsilon$ we have $x \notin L_{\frac{n-1}{n}}$. Then, with probability greater than $1/n$ a random element $y$ is not in $\mathbb{O}_x$. Thus, with positive probability two random elements $x,y$ do not generate an open subgroup, in contradiction to $Q(G,2)=1$.
\end{proof}

{\color{red}
Now \cref{prop:A} is an immediate consequence of \cite[Thm.~4.5]{BL04}, \cref{prop:Q_L_1} and \cref{lem:big_open} part (2). We restate \cite[Thm.~4.5]{BL04} below for the convenience of the reader.

\begin{theorem}[{\cite[Thm.~4.5]{BL04}}]
 Let $\mathcal{G}$ be a semisimple algebraic group over a field $K$ in characteristic $p\ge0$. Let $G$ denote a compact open subgroup of $\mathcal{G}(K)$. Then $Q(G, 2) = 1$ unless $p$ divides $\lvert Z\rvert$ in which case $G$ is not finitely generated.
\end{theorem}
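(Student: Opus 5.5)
The plan is to show that for $\mu\times\mu$-almost every pair $(x,y)\in G\times G$ the closed subgroup $\Lambda=\langle x,y\rangle$ is open in $G$. The tool is the classification of compact Zariski-dense subgroups of semisimple groups over local fields, namely Pink's theorem, in its char $0$ form, strong approximation à la Weisfeiler/Nori. Passing to an open subgroup of $G$ changes neither the hypothesis nor the conclusion up to null sets. So I would first pass to the simply connected cover when $p\nmid |Z|$: the isogeny $\mathcal{G}^{\mathrm{sc}}\to\mathcal{G}$ is then étale, so it maps compact open subgroups to open subgroups and preserves Haar null sets. I would also decompose into almost simple factors over $F$ and assume $G$ is pro-$p$. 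If instead $p\mid |Z|$, I expect the non-finite generation to come from the inseparable isogeny: the image of $\mathcal{G}^{\mathrm{sc}}(F)$ has infinite index, and the quotient supplies an infinite elementary abelian $p$-quotient of $G$. I would treat this separately, since it is not needed for \cref{prop:A}.

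\textbf{Step 1 (Zariski density).} Show that almost every pair generates a Zariski-dense subgroup. The pairs whose generated group lies in a proper algebraic subgroup are contained in a countable union of proper subvarieties of $\mathcal{G}\times\mathcal{G}$. One sees this by taking conjugates of the finitely many conjugacy classes of maximal proper algebraic subgroups, or by noting that for each word $w$ the condition is Zariski closed, and using noetherianity. Since $G$ is Zariski dense in $\mathcal{G}$ and open, each proper subvariety meets $G\times G$ in a Haar null set. A countable union of null sets is null, so this step is done.

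\textbf{Step 2 (field of definition).} By Pink's theorem, a compact Zariski-dense subgroup $\Lambda\le G$ is open in $G$ provided two conditions hold. First, the closed subfield $E\subseteq F$ generated by the traces $\mathrm{tr}\,\mathrm{Ad}(\lambda)$, $\lambda\in\Lambda$, must be all of $F$. Second, in characteristic $p$, $\Lambda$ must not come from a Frobenius twist or a special isogeny (the cases with $p\in\{2,3\}$ and types $B,C,F_4,G_2$). The special isogeny case should be excluded by the same trace argument applied to the relevant representations. So the remaining task is to prove that for almost every pair, the traces of $\mathrm{Ad}$ of words in $x,y$ generate $F$ topologically.

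In characteristic $0$, $F$ is finite over $\mathbb{Q}_p$, so it has only finitely many closed subfields. For each proper one $E$, the pairs with $\mathrm{tr}\,\mathrm{Ad}(x)\in E$ form an analytic subset of $G\times G$ of positive codimension, hence a null set. This case is then done.

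\textbf{Main obstacle.} The main difficulty is characteristic $p$. Here $F=\mathbb{F}_q((t))$ has uncountably many closed subfields $E$ with $[F:E]<\infty$, so a union bound fails. My first attempt would be to consider the derivation $d/dt$ on $F$: a closed subfield $E$ over which $F$ is separable of finite degree at least $2$ should be detected by the vanishing of certain differentials. Then I would show that for generic $(x,y)$ the elements $\mathrm{tr}\,\mathrm{Ad}(w(x,y))$ for finitely many words $w$ already generate a field $E$ over which $F$ is separable. Combined with the fact that $F$ has $p$-basis $\{t\}$, this should force $E=F$. This involves showing that the trace map $G\to F$ has generically nonzero derivative, and ruling out $E\subseteq F^p$ using the non-vanishing of that derivative. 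This is the step where I expect the real work, and where the assumption $p\nmid|Z|$ enters through the separability of $\mathcal{G}^{\mathrm{sc}}\to\mathcal{G}$.
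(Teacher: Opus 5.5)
The paper gives no proof of this statement; it is quoted from \cite{BL04}. Your outline therefore has to stand on its own, and at its decisive step it does not.

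\textbf{The characteristic-$p$ step.} This step is only a plan, and the mechanism you propose cannot work. Knowing that $F=\mathbb{F}_q(\!(t)\!)$ is separable over the trace field $E$, and that $\{t\}$ is a $p$-basis, only gives $F=EF^p$, not $E=F$. For example, when $p$ is odd, $E=\mathbb{F}_q(\!(t^2)\!)$ is a proper closed subfield and $F/E$ is separable of degree $2$. Derivations such as $d/dt$ only detect subfields contained in $F^p=\ker(d/dt)$. For the uncountably many proper subfields over which $F$ is separable one has $\Omega_{F/E}=0$, so no vanishing differential sees them.

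The missing idea is to stop quantifying over all subfields at once:
\begin{enumerate}
\item Fix $x$ with $a=\mathrm{tr}\,\mathrm{Ad}(x)\notin\mathbb{F}_q$; this holds off a null set.
\item Every closed subfield containing $a$ contains the closed subfield $E_a$ generated by $a$. The extension $F/E_a$ is finite and simple (since $[F:F^p]=p$), so it has only finitely many proper intermediate fields $E_1,\dots,E_r$.
\item Each $E_i$ is a closed, non-open subgroup of $(F,+)$, hence Haar-null.
\item So every $y$ for which $\langle x,y\rangle$ has a proper trace field satisfies $\mathrm{tr}\,\mathrm{Ad}(y)\in E_1\cup\dots\cup E_r$. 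This is a null condition once the trace function has non-vanishing differential off a proper Zariski-closed set, and Fubini finishes.
\end{enumerate}

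The same device is needed for your reduction to almost simple factors, which is not free. With two isomorphic factors, a pair can generate a Zariski-dense subgroup with open projections that lies in the graph of a continuous non-algebraic field automorphism, e.g.\ one induced by $t\mapsto t+t^2$. So openness factor by factor does not imply openness.

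\textbf{Small characteristic.} The criterion you extract from Pink's theorem fails here, and the derivative you rely on vanishes identically. Take $\mathcal{G}=\mathrm{SL}_2$ over $F=\mathbb{F}_2(\!(t)\!)$. The covering $\mathcal{G}^{\mathrm{sc}}\to\mathcal{G}$ has degree $1$, so the statement asserts $Q(G,2)=1$. Yet for $g=\begin{pmatrix}a&b\\c&d\end{pmatrix}$, computing on the basis $e,f,I$ of $\mathfrak{sl}_2$ (note $I\in\mathfrak{sl}_2$ in characteristic $2$) gives
$\mathrm{tr}\,\mathrm{Ad}(g)=1+a^2+d^2=(1+\mathrm{tr}\,g)^2\in F^2$.
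So the $\mathrm{Ad}$-trace field of every subgroup, $G$ included, lies in $F^2$. Your remaining task of showing that $\mathrm{Ad}$-traces generate $F$ is therefore impossible.

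In the exceptional cases ($p=2$ with types $A_1,B_n,C_n,F_4$; $p=3$ with type $G_2$) you must instead show that $\langle x,y\rangle$ has the same Pink data $(E,H,\varphi)$ as $G$. Here $E$ has to be detected through a suitable representation (for $\mathrm{SL}_2$, the natural one), not by requiring $\mathrm{Ad}$-traces to generate $F$ with $\varphi$ an isomorphism.

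\textbf{The case $p\mid|Z|$.} The other half of the statement, that $G$ is then not finitely generated, is only gestured at. You still have to prove that $G$ meets the image of $\mathcal{G}^{\mathrm{sc}}(F)$ in a subgroup of infinite index. One way is to show that the connecting map $G\to H^1_{\mathrm{fppf}}(F,Z)$ has infinite image. That image is abelian of finite exponent, so its being infinite would rule out finite generation.
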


%\begin{corollary}\label{cor:barnea_larsen}
%  Let $\mathcal{G}$ be a semisimple algebraic group over a field $K$ of characteristic~$p$. Let $G$ denote a compact open subgroup of $\mathcal{G}(K)$ and suppose that $G$ is finitely generated. Then $\Dv(G)$ is connected and of diameter at most $2$.
%\end{corollary}
}

%%%%%%%%%%%%%%%%%%%%%%%%%%%%%%%%%%%%%%%%%%%%%%%%%%%%%%

\section{Examples of exotic openizers} \label{sec:openizers_profinite}

In this section we examine the behaviour of the measure of openizers in profinite groups. We will exhibit examples that show that the openizers in profinite groups can be dense and of measure zero, even for solvable profinite groups. Moreover, we will show that openizers do not need to be closed in pro-$p$ groups.

\subsection{Openizers in cartesian products of simple groups}
We begin with some number-theoretic lemmas.
\begin{lemma}

\[
	T(n) = \sum_{\substack{a+b=n \\ a,b \geq 1}} \frac{1}{ab} 
	= \sum_{a=1}^{n-1} \frac{1}{a(n-a)}\leq \frac{2\ln n + 1}{n} .
	\]
\end{lemma}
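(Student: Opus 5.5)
The approach is partial fractions: for $1\le a\le n-1$ we have
\[
\frac{1}{a(n-a)}=\frac{1}{n}\left(\frac{1}{a}+\frac{1}{n-a}\right).
\]
Summing over $a$ and using the symmetry $a\leftrightarrow n-a$, the two resulting sums coincide. This gives the exact identity
\[
T(n)=\frac{2}{n}\sum_{a=1}^{n-1}\frac{1}{a}=\frac{2H_{n-1}}{n},
\]
where $H_m$ denotes the $m$-th harmonic number. The statement is thereby reduced to the inequality $2H_{n-1}\le 2\ln n+1$.

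For the harmonic estimate I would use the standard integral comparison. For $a\ge 2$ we have $\frac1a\le\int_{a-1}^{a}\frac{dt}{t}$, so
\[
H_{n-1}\le 1+\int_1^{n-1}\frac{dt}{t}=1+\ln(n-1)\le 1+\ln n.
\]
This alone yields only $T(n)\le \frac{2\ln n+2}{n}$, so a sharper bound is needed to recover the constant $1$.

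This constant is the only real obstacle. One option is to keep the first two terms exactly, giving $H_{n-1}\le \frac32+\ln(n-1)-\ln 2$, and note that $\frac32-\ln 2\approx 0.807$, which is still too big. The cleaner route is the trapezoid (convexity) bound $H_m\le \ln m+\frac12+\frac{1}{2m}$. It follows because $1/t$ is convex, so the trapezoid rule overestimates, giving
\[
\int_1^m\frac{dt}{t}\le \sum_{a=1}^{m}\frac1a-\frac12-\frac1{2m}.
\]
That inequality runs in the wrong direction, so instead I would use the midpoint rule: by convexity, $\frac1a\le\int_{a-1/2}^{a+1/2}\frac{dt}{t}$. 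Applying this for $a\ge 2$ gives
\[
H_{n-1}\le 1+\ln\frac{n-\frac12}{\frac32}\le 1+\ln n-\ln\tfrac32,
\]
and since $1-\ln\frac32\approx 0.595$, this yields $2H_{n-1}\le 2\ln n+1.19$, still slightly too big.

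To close the gap I would apply the midpoint bound only from $a\ge 3$, which gives
\[
H_{n-1}\le \tfrac32+\ln n-\ln\tfrac52.
\]
Since $\frac32-\ln\frac52\approx 0.584>\frac12$, this also falls short. The honest way out is the asymptotic $H_{n-1}=\ln n+\gamma-\frac{1}{2n}+O(n^{-2})$ with $\gamma\approx 0.577>\frac12$. This shows that $2H_{n-1}\le 2\ln n+1$ actually \emph{fails} for large $n$: it would require $\gamma\le \frac12$. So the claimed bound cannot hold as literally stated for all $n$. For example, at $n=100$ we get $2H_{99}\approx 10.35$, while $2\ln 100+1\approx 10.21$.

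I would therefore prove the corrected statement $T(n)=\frac{2H_{n-1}}{n}\le\frac{2\ln n+2}{n}$ via the integral comparison above. Any later use in the paper only needs $T(n)=O(\ln n/n)$, so this bound suffices. The main point to flag is that the constant $1$ must be replaced by $2$ (or by $2\gamma+o(1)$).
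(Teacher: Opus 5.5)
Your proposal is correct and follows the paper's own route: the paper also gets $T(n)=2H_{n-1}/n$ by partial fractions and then cites $H_k\le \ln k+1$. That only yields $T(n)\le \frac{2\ln n+2}{n}$, so the constant $1$ in the statement is an error in the paper and your diagnosis is right; in fact $2H_{n-1}-2\ln n$ increases to $2\gamma>1$, and the claimed bound fails for every $n\ge 7$, starting with $2H_6=4.9>2\ln 7+1\approx 4.892$. Your corrected bound $T(n)\le \frac{2(\ln n+1)}{n}$ is exactly what the paper uses in \cref{stime}, whose numerator contains $2(\ln n+1)$, so nothing downstream is affected.
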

\begin{proof}	
Since
	\[
	\frac{1}{a(n-a)} = \frac{1}{n}\left(\frac{1}{a} + \frac{1}{n-a}\right),
	\]
it follows that
	\[
	T(n) 
	= \frac{1}{n} \sum_{a=1}^{n-1} \left(\frac{1}{a} + \frac{1}{n-a}\right)
	= \frac{2}{n} \sum_{a=1}^{n-1} \frac{1}{a}
	= \frac{2\,H_{n-1}}{n},
	\]
where $H_k = \sum_{j=1}^{k} \frac{1}{j}$ is the $k$-th harmonic number. We conclude using the classical estimation 
 $H_k \leq \ln k + 1$ for all $k \geq 1$.
\end{proof}

\begin{lemma}
	\[
	S(n) = \sum_{\substack{a+b+c=n \\ a,b,c \geq 1}} \frac{1}{abc}\leq \frac{4(\ln n + 1)^2}{n}.
	\]
\end{lemma}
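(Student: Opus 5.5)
We reduce $S(n)$ to the two-variable sum $T$ of the previous lemma by fixing one of the three variables. Fix $c$ with $1\le c\le n-2$. The remaining pairs $(a,b)$ with $a,b\ge1$ satisfy $a+b=n-c$, so
\[
S(n)=\sum_{c=1}^{n-2}\frac{1}{c}\,T(n-c).
\]
Applying the previous lemma gives $T(m)\le \frac{2\ln m+1}{m}\le \frac{2(\ln n+1)}{m}$ for $2\le m\le n$. Hence
\[
S(n)\le 2(\ln n+1)\sum_{c=1}^{n-2}\frac{1}{c(n-c)}= 2(\ln n+1)\,T'(n),
\]
where $T'(n)=\sum_{c=1}^{n-2}\frac{1}{c(n-c)}$.

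To bound $T'(n)$, we repeat the partial fraction trick from the proof of the previous lemma:
\[
\frac{1}{c(n-c)}=\frac1n\Bigl(\frac1c+\frac1{n-c}\Bigr).
\]
Summing over $c$ from $1$ to $n-2$ gives
\[
T'(n)\le \frac1n\bigl(H_{n-2}+H_{n-1}\bigr)\le \frac{2H_{n-1}}{n}\le\frac{2(\ln n+1)}{n}.
\]
Alternatively, $T'(n)\le T(n)$ directly, since the summands are positive and $T(n)$ has one more term, and then we use the same harmonic estimate. Combining the two bounds yields
\[
S(n)\le \frac{4(\ln n+1)^2}{n}.
\]

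The edge cases need a brief check. For $n\le 2$ the sum is empty, so $S(n)=0$ and the bound holds trivially. The estimate $H_k\le \ln k+1$ is used for $k\ge1$, and $\ln(n-c)\le\ln n$ holds throughout the range of $c$.

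The only mildly delicate step is the uniform replacement of $2\ln(n-c)+1$ by $2(\ln n+1)$. This is safe because the logarithm is increasing and $2\ln m+1\le 2\ln m+2$. There is no real obstacle: the proof is the single-variable decomposition followed by a second application of the partial fraction identity.
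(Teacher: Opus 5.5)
Your argument is correct and is essentially the paper's. In both, you split off one variable to get $S(n)=\sum_{c=1}^{n-2}\frac1c\,T(n-c)$, bound harmonic numbers by $\ln n+1$, and use $\frac{1}{c(n-c)}=\frac1n\left(\frac1c+\frac1{n-c}\right)$. The only difference is that you pull out the uniform factor $2(\ln n+1)$ before applying partial fractions. That order is slightly cleaner than the paper's: its step $\sum_a\frac{2H_{n-a-1}}{a(n-a)}=\frac4n\sum_a\frac{H_{n-a-1}}{a}$ is really only an inequality (at $n=4$ the two sides are $\frac32$ and $2$).

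One correction: do not quote the previous lemma's displayed bound $T(m)\le\frac{2\ln m+1}{m}$, because it is false for every $m\ge 7$. For example, $T(7)=\frac{7}{10}>\frac{2\ln 7+1}{7}\approx 0.6988$, and in general $2H_{m-1}-2\ln m\to 2\gamma>1$, where $\gamma$ is Euler's constant. Use instead the identity $T(m)=\frac{2H_{m-1}}{m}$ established in that lemma's proof. It gives $T(m)\le\frac{2(\ln(m-1)+1)}{m}\le\frac{2(\ln n+1)}{m}$ for $2\le m\le n-1$. This is exactly the inequality your argument uses, so the rest goes through unchanged.
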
	
\begin{proof}
	\[
	S(n) = \sum_{a=1}^{n-2} \frac{1}{a} \cdot T(n-a)
	= \sum_{a=1}^{n-2} \frac{1}{a} \cdot \frac{2\,H_{n-a-1}}{n-a}= \frac{4}{n} \sum_{a=1}^{n-2} \frac{H_{n-a-1}}{a}.
	\]
Since $H_{n-a-1} \leq \ln n + 1$ for all $a=1,\ldots,n-2$:
	\[
	S(n) 
	\leq \frac{4(\ln n+1)}{n} \sum_{a=1}^{n-2} \frac{1}{a}
	= \frac{4(\ln n+1)}{n} \cdot H_{n-2}
	\leq \frac{4(\ln n+1)^2}{n}. \qedhere
	\]
\end{proof}
\begin{lemma}\label{stime}
Let $p_n$ be the probability that a randomly chosen element $x \in A_n$ satisfies $\langle x, (1,2,3)\rangle=A_n$.
Then $$p_n\leq \frac{1+2(\ln n+1)+4(\ln n+1)^2}{n}.$$
\end{lemma}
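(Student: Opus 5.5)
If $\langle x,(1,2,3)\rangle=A_n$, then in particular the subgroup is transitive on $\{1,\dots,n\}$. The plan is to bound $p_n$ by the probability that $\langle x,(1,2,3)\rangle$ is transitive, and to bound that probability through the cycle structure of $x$. Let $O$ be the union of the orbits of $\langle x\rangle$ meeting $\{1,2,3\}$. The subgroup $\langle x,(1,2,3)\rangle$ preserves $O$, since $(1,2,3)$ moves only points of $O$ and $x$ preserves each of its own orbits. So transitivity forces $O=\{1,\dots,n\}$: every cycle of $x$ must contain at least one of $1,2,3$. Hence $x$ has at most three cycles in total (fixed points counted as cycles), and each cycle meets $\{1,2,3\}$.

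We split by the number $k\in\{1,2,3\}$ of cycles of $x$ and bound each contribution, working with uniform permutations in $S_n$ and then doubling, because $|A_n|=|S_n|/2$.

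\textbf{Case $k=1$.} $x$ is an $n$-cycle. There are $(n-1)!$ such permutations, giving probability at most $2(n-1)!/n!=2/n$ in $A_n$. To reach the target constant $1/n$, note that an $n$-cycle lies in $A_n$ only when $n$ is odd, and then all $(n-1)!$ of them lie in $A_n$, so the count is really $(n-1)!/(n!/2)$. I will need to recheck this constant. It may be that we use instead the cruder bound $(n-1)!/(n!/2)\cdot\frac12$ coming from parity.

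\textbf{Case $k=2$.} The cycle lengths are $a+b=n$. For given $a,b$, the number of permutations of that type is at most $n!/(ab)$ (exactly $n!/(ab)$ when $a\neq b$, half that when $a=b$). Summing over ordered pairs and dividing by $n!$ gives at most $T(n)\le(2\ln n+1)/n$, which is bounded by $2(\ln n+1)/n$.

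\textbf{Case $k=3$.} The cycle lengths are $a+b+c=n$. The number of permutations with cycle type $\{a,b,c\}$ is $n!/(abc\cdot m)$, where $m$ accounts for repeated lengths. Summing over ordered triples overcounts each unordered type at least once, so the $S_n$-probability is at most $S(n)\le 4(\ln n+1)^2/n$.

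The requirement that each cycle meet $\{1,2,3\}$ only decreases these counts, so we discard it. Adding the three cases gives the stated form $\frac{1+2(\ln n+1)+4(\ln n+1)^2}{n}$. The main obstacle is the factor $2$ from passing from $S_n$ to $A_n$. Since the counts are bounded over all of $S_n$, naive doubling would give twice the stated bound. To save this factor, I will use the condition that each cycle contains one of $1,2,3$, rather than discarding it. For two cycles, the probability that $1,2,3$ are not all in one cycle introduces factors of roughly $3ab(\ldots)/n^2$ that are $O(1)$ but at most $1$. More directly, for $k=1$, parity shows that the $n$-cycles in $A_n$ have probability exactly $1/n$ when $n$ is odd (and $0$ otherwise). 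For $k=2,3$, the number of cycles fixes the parity, since $\mathrm{sgn}(x)=(-1)^{n-k}$. So either all such $x$ lie in $A_n$ or none do. If none do, the contribution is $0$. If all do, the $A_n$-probability is twice the $S_n$-probability. I would then check that for each parity of $n$ only the terms that match the bound survive, using that $k$ and $k+2$ share parity: $k=1,3$ occur together and $k=2$ occurs alone. If the constants still don't fit, I would refine the estimates of $T$ and $S$ by retaining the condition on $\{1,2,3\}$.
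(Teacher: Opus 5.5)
Your reduction is the paper's: transitivity of $\langle x,(1,2,3)\rangle$ forces every cycle of $x$ to meet $\{1,2,3\}$, so $x$ has at most three cycles, and one counts by cycle type via $T(n)$ and $S(n)$. Your justification of this first step is more careful than the paper's. The gap is the normalisation by $|A_n|=n!/2$: you flag it but do not close it, and the repairs you suggest fail.

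For odd $n$, all $(n-1)!$ of the $n$-cycles lie in $A_n$, so their proportion is $2/n$, not $1/n$; parity supplies no extra factor $\tfrac12$. The parity split does handle even $n$: only $k=2$ survives, and even $2T(n)\le\frac{2(2\ln n+1)}{n}$ fits under the target. But for odd $n$ your estimates give $p_n\le\frac2n+2S(n)\le\frac{2+8(\ln n+1)^2}{n}$. This exceeds $\frac{1+2(\ln n+1)+4(\ln n+1)^2}{n}$ for every $n$, even if you use your value $1/n$ in place of $2/n$. Retaining the condition that each cycle meets $\{1,2,3\}$ is left too vague to produce a definite factor.

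The missing observation is that the ordered sums overcount by an exact factor. An ordered $k$-tuple of pairwise disjoint cycles of lengths $a_1,\dots,a_k$ covering $\{1,\dots,n\}$ can be chosen in exactly $n!/(a_1\cdots a_k)$ ways. Each permutation with exactly $k$ cycles arises from exactly $k!$ such tuples. So the proportion of $S_n$ with exactly $2$ (resp.\ $3$) cycles is $T(n)/2$ (resp.\ $S(n)/6$). You noticed you were overcounting, but that slack is precisely the factor you need. Hence
\[
p_n\le 2\Bigl(\frac1n+\frac{T(n)}2+\frac{S(n)}6\Bigr)=\frac2n+T(n)+\frac{S(n)}3\le\frac1n+T(n)+S(n).
\]
The last step holds because, for $n\ge4$, $S(n)\ge\frac{3}{n-2}>\frac{3}{2n}$ from the three orderings of $(1,1,n-2)$. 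The bounds on $T(n)$ and $S(n)$ then finish the proof.

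Alternatively, with your parity split: for odd $n$, $p_n\le\frac2n+\frac{S(n)}3$, which fits because $2\le1+2(\ln n+1)$; for even $n$, $p_n\le T(n)$.

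The paper's one-line proof is this same count, bounding the number of admissible $x$ by $n!\bigl(\frac1n+T(n)+S(n)\bigr)$. It leaves the passage from $S_n$ to $A_n$ implicit, and the $k!$ overcount is what justifies it.
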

\begin{proof}If $\langle x, (1,2,3)\rangle=A_n$ then in particular  $\langle x, (1,2,3)\rangle$ is transitive, so
	$x$ is the product of at most 3 disjoint cycles, and therefore the number of choices for $x$ is at most
	$$n! \left(\frac{1}{n}+T(n)+S(n)\right).\qedhere$$
    \end{proof}	

\begin{proposition}Let $G=\prod_{n\geq 5}A_n$. If $g=(g_n)_{n\ge 5} \in G$, then $\mathbb O_g \neq \emptyset$ if and only if there are only finitely many $n\geq 5$ with $g_n=1$. Moreover if $\mathbb O_g \neq \emptyset,$ then it is dense in $G$.
\end{proposition}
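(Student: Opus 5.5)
The plan is to reduce openness of $\langle g,h\rangle$ in $G=\prod_{n\ge 5}A_n$ to a coordinatewise condition, namely
\[
\langle g,h\rangle\le_o G \iff \langle g_n,h_n\rangle = A_n \text{ for all but finitely many } n,
\]
and then read off both assertions from it. The tools are the structure of closed normal subgroups of a cartesian product of pairwise non-isomorphic non-abelian finite simple groups, and the classical $\tfrac32$-generation of finite simple groups due to Guralnick and Kantor. The latter says every non-trivial element of a finite simple group lies in a generating pair; it is an external result, not one from the paper.

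\emph{Step 1: structure.} First I show that every closed normal subgroup of $P=\prod_{n\in S}A_n$, for any $S\subseteq\{5,6,\dots\}$, has the form $\prod_{n\in T}A_n$ with $T\subseteq S$. A closed normal $M$ either projects trivially to $A_n$ or, by simplicity, onto $A_n$. In the latter case, commutators of $M$ with elements of $P$ supported only at $n$ show that $M\supseteq A_n$. Closedness then gives $M=\prod_{n\in T}A_n$. In particular, the open normal subgroups of $G$ are exactly $\prod_{n\in T}A_n$ with $T$ cofinite.

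Next I show that a closed subgroup $K\le\prod_{n\ge N}A_n$ which projects onto every factor is the whole product. For finite subproducts I use induction with Goursat's lemma: a subdirect product of $A_m$ and a subdirect product of pairwise non-isomorphic simple groups not involving $A_m$ must be the full direct product, since no non-trivial common quotient exists. I then pass to the inverse limit using closedness.

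\emph{Step 2: the criterion.} If $\langle g,h\rangle$ is open, it contains some $\prod_{n\ge N}A_n$, so $\langle g_n,h_n\rangle=A_n$ for $n\ge N$. Conversely, suppose $\langle g_n,h_n\rangle=A_n$ for all $n\ge N$, and put $H=\langle g,h\rangle$. By Step 1, $H$ projects onto $\prod_{n\ge N}A_n$. Let $H_0=H\cap\prod_{n\ge N}A_n$, which is the kernel of the projection of $H$ to the finite group $\prod_{n<N}A_n$. Then $H_0$ has finite index in $H$ and is normal in $H$. Its image in $\prod_{n\ge N}A_n$ is therefore a closed normal subgroup of finite index in the full product. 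By Step 1 this image equals $\prod_{n\in T}A_n$ with $T$ cofinite. But $H_0$ already lies in $\prod_{n\ge N}A_n$, so $H_0$ equals this image, and hence $H\supseteq\prod_{n\in T}A_n$ is open. I expect this step to be the main obstacle, specifically making the Goursat and inverse-limit argument and this normality bookkeeping clean.

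\emph{Step 3: conclusion.}
\begin{enumerate}
\item If $g_n=1$ for infinitely many $n$, then for those $n$ the group $\langle g_n,h_n\rangle$ is cyclic, hence proper in $A_n$, for every $h$. So $\mathbb O_g=\emptyset$ by the criterion.
\item If $g_n\ne1$ for all $n\ge N$, the Guralnick--Kantor theorem gives $h_n\in A_n$ with $\langle g_n,h_n\rangle=A_n$ for each $n\ge N$. Choosing $h_n$ arbitrarily for $n<N$ yields $h\in\mathbb O_g$ by the criterion.
\item For density, take a basic open set prescribing finitely many coordinates $h_n=a_n$ for $n<M$. Keep these prescribed values and choose generating partners $h_n$ for $n\ge\max(M,N)$. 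The resulting $h$ lies in $\mathbb O_g$ and in the given open set, so $\mathbb O_g$ is dense.
\end{enumerate}
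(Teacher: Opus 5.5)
Your proposal is correct and follows essentially the same route as the paper: the coordinatewise criterion for openness, the $\tfrac32$-generation of $A_n$ to produce generating partners, and density because membership in $\mathbb O_g$ is unaffected by changing finitely many coordinates. The only difference is that you prove the criterion, via the closed normal subgroups of $\prod A_n$ and Goursat's lemma, whereas the paper simply asserts it as clear.
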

\begin{proof}
Clearly $(y_n)_{n\ge 5}\in \mathbb O_g$ if and only if  for all $n$, except at most a finite number, $\langle g_n, y_n\rangle=A_n.$ Since one can find $y_n$ such that $\langle g_n, y_n\rangle = A_n$ if and only if $g_n \neq  1,$ it follows immediately that $\mathbb O_g \neq\emptyset,$ if and only if $g_n \neq 1$ for all but finitely many $n$. 

Let now $t\geq 5$ and pick $y \in \mathbb O_g$. Write $G=Q_t \times N_t$ where $\pi_t:G \to Q_t = \prod_{5\le n \le t} A_n$ is the natural projection and $N_t =\ker \pi_t$. For all $x\in Q_t$, we have that $x y\in \mathbb O_g$; because multiplying by an element of $Q_t$ only changes the first $t$ coordinates. Hence, $N_t \mathbb O_g =G$ and $\mathbb{O}_g$ is dense in $G$.
%
%$N_t=\ker \pi_5\cap\dots \cap \ker \pi_t.$ If $(y_5,\dots )\in \mathbb O_g$, then $(y_5+x_5,\dots,y_t+x_t,y_{t+1},\dots) \in \mathbb O_g$ for every $(x_5,\dots,x_t)$ and therefore, if $\mathbb O_g\neq \emptyset$, then $N_t\mathbb O_g=G.$
\end{proof}

We will need the following classical result from probability theory, known as the First Borel-Cantelli lemma.

\begin{theorem}[{\cite[pp.~389-392]{Ren70}}]\label{bc}
Let $\Omega$ be a probability space with probability $p$. $(X_n)_{n\in \mathbb N}$ be a sequence of events in $\Omega$ such that $\sum_{n\in \mathbb N} p(X_n)< \infty$.
Then, almost surely, only finitely many $X_n$'s will occur.
\end{theorem}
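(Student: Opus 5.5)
The statement is the first Borel--Cantelli lemma, a classical result from probability theory cited from \cite{Ren70}. I would give the standard short measure-theoretic argument.

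The idea is to express the event ``infinitely many $X_n$ occur'' as the limit superior of the events,
\[
X^\ast=\limsup_{n\to\infty} X_n=\bigcap_{N\in\mathbb N}\bigcup_{n\ge N} X_n ,
\]
and show that it has probability zero.

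First, $X^\ast$ is measurable, since it is a countable intersection of countable unions of measurable sets. By construction, an outcome $\omega$ lies in $X^\ast$ exactly when $\omega\in X_n$ for infinitely many $n$.

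Next, for every $N$ we have $X^\ast\subseteq\bigcup_{n\ge N}X_n$. Monotonicity and countable subadditivity of $p$ then give
\[
p(X^\ast)\le p\Bigl(\bigcup_{n\ge N}X_n\Bigr)\le\sum_{n\ge N}p(X_n).
\]
The right-hand side is the tail of a convergent series, so it tends to $0$ as $N\to\infty$. Hence $p(X^\ast)=0$, which means that almost surely only finitely many of the $X_n$ occur.

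There is no real obstacle here. The only point needing care is the correct set-theoretic description of ``infinitely many occur'' as the $\limsup$ of the events. Notably, no independence assumption is needed; that hypothesis is required only for the converse, the second Borel--Cantelli lemma.

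In the paper's intended application, the events would be $X_n=\{\langle x_n,(1,2,3)\rangle=A_n\}$, with $p(X_n)=p_n$ bounded as in \cref{stime}. Summability of $p_n$ is not automatic from that bound, since it is of order $(\ln n)^2/n$. So I expect the paper to apply the lemma along a sparse set of indices, or with a modified element, where the bound is summable.
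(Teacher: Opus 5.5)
Your proof is correct. It is the standard argument: write ``infinitely many occur'' as $\limsup_n X_n=\bigcap_N\bigcup_{n\ge N}X_n$ and bound its probability by the tail $\sum_{n\ge N}p(X_n)\to 0$. The paper gives no proof of its own and only cites R\'enyi for this classical fact.

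Your remark about the application is also right that the bound on $p_n$ is not summable. The paper gets around this by pairing factors rather than passing to a sparse subsequence: it takes events $X_m$ for odd $m$ on $A_m\times A_{m+1}$. Then $\mu(X_m)\le p_m p_{m+1}=O\bigl((\ln m)^4/m^2\bigr)$, which is summable.
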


\begin{theorem}
Let $G=\prod_{n\geq 5}A_n$ and consider $x=(x_n)_{n\ge 5}\in G$, with $x_n=(1,2,3)$ for every $n\geq 5.$ Then $\mathbb O_x$ is dense in $G$ and $\mu(\mathbb O_x)=0.$
\end{theorem}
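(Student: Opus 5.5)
Density is immediate from the preceding proposition: every coordinate $x_n=(1,2,3)$ is nontrivial, so $\mathbb O_x\neq\emptyset$ and hence $\mathbb O_x$ is dense in $G$. The real content is $\mu(\mathbb O_x)=0$, and I would prove it with the first Borel--Cantelli lemma (\cref{bc}) combined with the estimate of \cref{stime}.

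First I would describe $\mathbb O_x$ concretely. As in the proof of the previous proposition, $y=(y_n)_{n\ge5}$ lies in $\mathbb O_x$ if and only if $\langle (1,2,3),y_n\rangle=A_n$ for all but finitely many $n$. One direction holds because an open subgroup contains $N_t$ for some $t$ and therefore projects onto $A_n$ for all $n>t$. For the converse, I would invoke the fact that if a closed subgroup projects onto each simple factor $A_n$ for $n>t$ and the factors are pairwise non-isomorphic, then it contains $\prod_{n>t}A_n$ (a Goursat-type argument). This gives openness; I take it as already used in the preceding proof.

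Next, the Haar measure $\mu$ on $G$ is the product of the uniform probability measures on the $A_n$, so the coordinates $y_n$ of a Haar-random $y$ are independent and uniform. Define the event $E_n=\{y\in G\mid \langle (1,2,3),y_n\rangle=A_n\}$. By \cref{stime}, $\mu(E_n)=p_n\le \bigl(1+2(\ln n+1)+4(\ln n+1)^2\bigr)/n$. This bound is of order $(\ln n)^2/n$, and $\sum_n(\ln n)^2/n$ diverges, so Borel--Cantelli does not apply to the $E_n$ directly. This divergence is the main obstacle.

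To get around it, I would use the complementary events. The set $\mathbb O_x$ is contained in the set of $y$ for which $E_n$ holds for all $n\ge N$, for some $N$, so $\mathbb O_x=\bigcup_N\bigcap_{n\ge N}E_n$. By independence, $\mu\bigl(\bigcap_{n\ge N}E_n\bigr)=\prod_{n\ge N}p_n=0$, because $p_n\to0$ (indeed $p_n<1/2$ for large $n$). A countable union of null sets is null, so $\mu(\mathbb O_x)=0$. Equivalently, the second Borel--Cantelli lemma, which uses independence, shows that the complements $E_n^c$ occur infinitely often almost surely, since $\sum\mu(E_n^c)=\infty$. So the intended proof needs only $p_n\to0$ from \cref{stime} together with independence; the first Borel--Cantelli lemma would be required only under a summable bound, which \cref{stime} does not provide.
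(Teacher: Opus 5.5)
Your proof is correct, but it reaches $\mu(\mathbb O_x)=0$ by a different route from the paper.

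\textbf{The paper's route.} It keeps the first Borel--Cantelli lemma and removes the divergence you identify by grouping coordinates into disjoint pairs. For odd $m$ it takes the event $X_m$ that $y$ generates $A_m\times A_{m+1}$ together with $((1,2,3),(1,2,3))$. It notes that $\mu(X_m)\le p_mp_{m+1}=O\bigl((\ln m)^4/m^2\bigr)$, which is summable. Hence almost surely only finitely many $X_m$ occur, whereas every $y\in\mathbb O_x$ lies in all but finitely many $X_m$. So your remark that \cref{stime} gives no summable bound is true only for the single events $E_n$, not for paired ones.

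\textbf{Your route.} You use independence over a whole tail, $\mu\bigl(\bigcap_{n\ge N}E_n\bigr)=\prod_{n\ge N}p_n=0$. This only needs $\sum_n(1-p_n)=\infty$, which is much weaker than \cref{stime}. For instance, the trivial bound $p_n\le 1-\frac1n$ already suffices: if $y_n$ fixes the point $n$, then $\langle(1,2,3),y_n\rangle$ is intransitive. The same one-line argument also handles the procyclic example in the next subsection without choosing the blocks $H_m$, since there $\prod_{k\ge N}(1-1/p_k)=0$ directly.

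\textbf{What each buys.} Your approach is more elementary and needs almost no quantitative input on $p_n$. The paper's pairing argument gives slightly stronger information: $\mu(\limsup X_m)=0$, which implies your $\mu(\liminf E_n)=0$ because $\liminf E_n\subseteq\liminf X_m$. It also fits the Borel--Cantelli template used throughout that section.
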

\begin{proof} For $m$ odd, $m\geq 5,$ let $G_m=A_m\times A_{m+1}$ and let
	$$X_m=\{g=(g_n)_n\in G\mid\langle (g_{m},g_{m+1}),((1,2,3),(1,2,3))\rangle=G_m\}.$$
Recall that $p_n$ denotes the probability that a randomly chosen element $x \in A_n$ satisfies
$\langle x, (1, 2, 3)\rangle = A_n$.
Notice that $\mu(X_m) \le p_mp_{m+1}$, so by \cref{stime},
$\sum_m \mu(X_m)$ is convergent. Hence, by the First Borel-Cantelli Lemma, the probability that infinitely many of the events $X_m$ occur is 0, and this implies $\mu(\mathbb O_x)=0.$
\end{proof}

\subsection{Openizers in procyclic profinite groups}
A variation of the previous argument can also be applied applied to the procyclic  profinite group $G = \prod_{p \text{ prime}} C_p$.
Let $(p_n)_{n\in \mathbb N}$ be the sequence of prime numbers in increasing order. Since
$$\prod_n\left(1-\frac{1}{p_n}\right)=0,$$
there is a sequence $(r_m)_{m\in \mathbb N}$ of positive integers with the property that
$$\gamma_m:=\prod_{r_m<k\leq r_{m+1}}\left(1-\frac{1}{p_k}\right)\leq \frac{1}{m^2}.$$ Let 
$H_m=\prod_{r_m<k\leq r_{m+1}}C_{p_k}. $
Clearly $G=\prod_{m\in\mathbb N} H_m$ and $$\mathbb O_1=\{(h_m)_{m\in \mathbb N}\mid
\langle h_m\rangle=H_m \text { for all but finitely many $m$}\}.$$ Let $X_m=\{(h_m)_{m\in \mathbb N}\mid
\langle h_m\rangle=H_m\}.$ Since $\mu(X_m)=\gamma_m,$ the infinite sum $\sum_{m\in \mathbb N}\mu(X_m)$ is convergent, and it follows from the First Borel-Cantelli lemma that $\mu(\mathbb O_1)=0.$

\subsection{Openizers in profinite groups involving few primes}
The previous subsections might led us to believe that we need infinitely many primes dividing the order of our profinite group to construct dense openizers. Here we show that this is not necessary, as we can construct metabelian profinite groups whose order is divisible by exactly two primes with dense openizers.

Let $p$ and $q$ be two different primes. For every $n\in \mathbb N,$ let $a_n$ be the order of $q$ modulo $p^n.$
Let $V_n$ be the additive group of $\mathbb F_{q^{a_n}}$ and
	$x_n$ be an element of order $p^n$ in the multiplicative group of this field. We define an action of $\mathbb Z_p=\langle z\rangle$ on $V=\prod_{n\in \mathbb{N}} V_n$ by setting $v^z=x_nv$ for every $v\in V_n$ and $n\in \mathbb{N}$. Let $G=V\rtimes \mathbb Z_p.$ Then $(v_n)_{n\in \mathbb N}\in V\cap \mathbb O_z$ if and only if $v_n\neq0$ for only finitely many $n\in \mathbb N.$
	This implies that $\mathbb O_z$  is not a closed subset of $G$. 
    
    We will now show that $\mathbb{O}_z$ is dense in $G$. Fix $t\in \mathbb Z_p$ and let $\Omega_t=Vt \cap \mathbb O_z$. Then $\Omega_t t^{-1}$ contains the direct sum of the $V_n$, so $\overline{\Omega_t t^{-1}}=V$ and therefore $Vt$ is contained in the closure of $\mathbb O_z$. We conclude that $\mathbb{O}_z$ is dense in $G$, since $G=\bigcup_{t\in \mathbb{Z}_p} Vt$. %\commandrea{$Vt$ is contained in the closure for every $t$, so every element of $G$ is in the closure}

\subsection{Openizers in pro-\texorpdfstring{$p$}{p} groups}
As we noted in~\cref{lem:open_pos_measure}, openizers in pro-$p$ groups always have positive Haar measure. However, openizers do not need to be closed subsets. Here we just note that this is a consequence of~%\cref{cor:graph_uniform} 
\cref{prop:A} {\color{red}and \cref{prop:Q_L_1}: many} openizers in $\mathrm{SL}_2^1(\mathbb{Z}_p)$ are of measure one and they are not closed. In fact, if the openizer of one element were closed, its complement would be open and, hence, of positive measure.

%%%%%%%%%%%%%%%%%%%%%%%%%%%%%%%%%%%%%%%%%%%%%%%%%%%%%%%%%%

\section{\texorpdfstring{$p$}{p}-adic analytic groups}
\label{sec:p-adic-analytic}

%Throughout this section $G$ will be a $p$-adic analytic group, unless.
In this section, we deal with compact $p$-adic analytic groups. It is well known that a profinite group $G$ admits the structure of a $p$-adic analytic group if and only if $G$ is virtually a uniform pro-$p$ group, \cite[Theorem~8.18]{DDMS99}.

We start by studying uniform pro-$p$ groups using the Lazard correspondence \cite[Thm.~9.10]{DDMS99}. The connectedness of the {\color{red}proper} virtually generating graph $\Dv(U)$ of a uniform pro-$p$ group $U$ `translates' to the connectedness of the generating graph of the corresponding Lie algebra. %, which reduces to consider uniform subgroups with associated 2-generated $\mathbb{Q}_p$-Lie algebra. 
For such groups $U$, we prove that the diameter of $\Dv(U)$ is at most 2.% and the only isolated vertex of $\Gv(U)$ is the identity. 

Then, we prove that the graph $\Dv(G)$ is connected and of diameter at most $4$ for a $p$-adic analytic group $G$ with 2-generated $\mathbb{Q}_p$-Lie algebra, set of torsion elements of measure-zero {\color{red}and admitting a uniform subgroup $U$ such that $\Dv(U)=U\smallsetminus\{1\}$; see \cref{prop:improvement}. 

In \cref{sec:isolated_simple}, we characterise isolated vertices of $\Gv$ for certain compact $p$-adic simple Lie groups.  Finally, we prove \cref{propABC:virt_ab} parts (1) and (2) in \cref{Sec:maxClass} and \cref{ex:Q8}, respectively. %that the proper virtually generating graph for the pro-$p$ group of maximal class is connected and of diameter 2.
}

\subsection{Uniform pro-$p$ groups}
\label{sec:uniform-pro-p}

%We first deal with uniform pro-$p$ groups.
There is an exact correspondence between uniform pro\nobreakdash-$p$ groups and powerful Lie lattices over $\mathbb{Z}_p$, which assigns to a uniform group $U$ the $\mathbb{Z}_p$-powerful Lie lattice $\log(U)$, \cite[Theorem 9.10]{DDMS99}. 
%However, being powerful is not coherently inherited by subgroups so this correspondence is not enough to study open subgroups of a uniform pro-$p$ group. 

\begin{lemma}\label{lem:corresp}
    Let $U$ be a uniform pro-$p$ group and $H$ be a closed subgroup of $U$. Then, $H$ is open in $U$ if and only if $\log(H)$ generates the $\mathbb{Q}_p$-Lie algebra of $U$. 
\end{lemma}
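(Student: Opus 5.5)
The plan is to prove both implications by passing between closed subgroups of $U$ and $\mathbb{Q}_p$-Lie subalgebras of $\mathcal{L}=\mathbb{Q}_p\otimes_{\mathbb{Z}_p}\log(U)$, using the Lazard correspondence \cite[Thm.~9.10]{DDMS99} and the dimension theory of $p$-adic analytic groups. Throughout, $\log\colon U\to \log(U)\subseteq \mathcal{L}$ is the usual bijection, and ``$\log(H)$ generates'' means that the smallest $\mathbb{Q}_p$-Lie subalgebra of $\mathcal{L}$ containing $\log(H)$ is all of $\mathcal{L}$.

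\emph{($\Rightarrow$).} If $H\le_o U$, then $H\supseteq U^{p^k}$ for some $k$, since the subgroups $U^{p^k}$ form a base of neighbourhoods of $1$ in a uniform group. For uniform $U$ we have $\log(U^{p^k})=p^k\log(U)$ \cite[Ch.~4, Ch.~9]{DDMS99}. Hence $\log(H)\supseteq p^k\log(U)$, and the latter already spans $\mathcal{L}$ as a $\mathbb{Q}_p$-vector space. So $\log(H)$ generates $\mathcal{L}$.

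\emph{($\Leftarrow$).} Let $H$ be closed with $\log(H)$ generating $\mathcal{L}$. I will argue in three steps.
\begin{enumerate}
\item A closed subgroup of a compact $p$-adic analytic group is again $p$-adic analytic, so $H$ contains an open uniform subgroup $W$ \cite[Thm.~8.32, Cor.~4.3]{DDMS99}.
\item Apply Lazard's correspondence to $W$. Since $W\le U$, the same $\log$ map (via the Campbell--Hausdorff formula, equivalently the limit formulas for $+$ and $[\cdot,\cdot]$ in terms of $p$-power maps and commutators inside $W$) makes $\log(W)$ a $\mathbb{Z}_p$-Lie sublattice of $\log(U)$. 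Therefore $\mathfrak{h}=\mathbb{Q}_p\log(W)$ is a $\mathbb{Q}_p$-Lie subalgebra of $\mathcal{L}$ of dimension $\dim W=\dim H$.
\item Show $\log(H)\subseteq\mathfrak{h}$. Given $h\in H$, openness of $W$ in $H$ gives some $m$ with $h^{p^m}\in W$. Then $\log(h)=p^{-m}\log(h^{p^m})\in \mathfrak{h}$.
\end{enumerate}
Since $\log(H)$ generates $\mathcal{L}$ and $\mathfrak{h}$ is a Lie subalgebra containing it, $\mathfrak{h}=\mathcal{L}$. Hence $\dim W=\dim U$.

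To conclude, a closed subgroup of a compact $p$-adic analytic group with the same dimension is open. One way to see this: $\log(W)$ is then a full-rank $\mathbb{Z}_p$-sublattice of $\log(U)$, so $p^k\log(U)\subseteq\log(W)$ for some $k$. By Lazard this gives $U^{p^k}\subseteq W\subseteq H$, so $H$ is open.

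The main obstacle is step 2. One must check that the Lie structure on $\log(W)$ coming from $W$ is the restriction of that on $\log(U)$, i.e.\ that $\mathbb{Q}_p\log(W)$ is closed under the bracket of $\mathcal{L}$. I expect this to follow from the fact that the Lazard operations are defined intrinsically by limits of group operations, which are computed identically in $W$ and in $U$. If needed, I would instead work in the $\mathbb{Q}_p$-analytic setting: there the Lie algebra of the analytic subgroup $H$ is $\mathfrak{h}$, and the dimension count is immediate.
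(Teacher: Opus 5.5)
Your proposal is correct and follows essentially the paper's argument: pass to an open uniform subgroup of $H$ and use $\log(h^{p^m})=p^m\log(h)$ to see that $\log(H)$ and the logarithm of that subgroup generate the same $\mathbb{Q}_p$-Lie subalgebra, then conclude via the Lazard correspondence. You are simply more explicit than the paper. It calls the right-to-left direction ``automatic'' and does not discuss whether the Lazard structure on $W$ agrees with the restriction of that on $U$; you rightly flag this point, and it holds because $p$-power roots are unique in $U$.
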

\begin{proof}
If $H$ is a uniform subgroup of $U$, the result is automatic by the Lie-correspondence and so is the right-to-left implication.

Suppose now that $H$ is an open subgroup. Then, there is a uniform open subgroup $\tilde{H}\le H$. Moreover, there is $m \in \mathbb{N}$ such that $H^{p^m} \le \tilde{H}$. Since $p$-powering in $G$ corresponds to multiplication by $p$ in $L$ (see \cite[Lemma~4.14]{DDMS99}), the $\mathbb{Q}_p$-Lie subalgebras generated by $\log(H)$ and $\log(\tilde{H})$ are the same.
\end{proof}

%{\color{red}
%\begin{lemma}
%    Let $G$ be a $p$-adic analytic group with uniform open subgroup $U$ and let $H$ be a closed subgroup of $G$. Then $H$ is open if and only if $\log(H\cap U)$ generates the  $\mathbb{Q}_p$-Lie algebra of $U$.
%\end{lemma}
%\begin{proof}
%  First observe that $H$ is open if and only if $H\cap U$ is. Then the statement follows from the previous lemma
%\end{proof}
%}

By~\cref{lem:corresp}, to study the connectedness of the graph $\Dv(U)$, it is sufficient to study the generating graph of the associated $\mathbb{Q}_p$-Lie algebra.

Note that, by \cite[Prop.~4.1]{LM87} and the above argument, we just need to consider uniform pro-$p$ groups with $2$-generated associated $\mathbb Q_p$-Lie algebra.

%\subsection{Generating graphs of Lie algebras}
%\label{sec:openizers_p_adic_lie_algebras}

\begin{definition}
\label{def:openizers_p_adic_lie_algebras}
    For a $\mathbb{Q}_p$-Lie algebra $L$, the \emph{generating graph} $\Gamma(L)$ is the graph which has as vertices the elements of $L$ and two elements $X,Y\in L$ are adjacent if the $\mathbb{Q}_p$-Lie subalgebra $\langle X,Y\rangle_{Lie}$ generated by $X$ and $Y$ is the whole of $L$. We denote by $\Delta(L)$ the subgraph of $\Gamma(L)$ obtained by removing isolated vertices.
\end{definition}

In analogy with the concept of openizer, we define the following subsets of $L$.
\begin{definition}
    For $X\in L$ we define the \emph{Lie co-openizer} of $X$ as
\[
   \mathfrak{O}_X^c = \{Y\in L \mid \langle X,Y\rangle_{Lie} \neq L \}.
\]
\end{definition}

\begin{remark}
 A $\mathbb{Q}_p$-Lie algebra can be seen as an abelian profinite group and, hence, it admits a Haar measure $\mu$. However, in general the $\mathbb{Q}_p$-Lie algebra $L$ is not compact and $\mu$ is not bounded on $L$. However, it still makes sense to talk about subsets with Haar measure zero and this is the reason why we defined co-Lie openizers instead of `Lie openizers'.
\end{remark}

\begin{lemma}
\label{Lem:meraviglia}
    Let $L$ be a $2$-generated $\mathbb{Q}_p$-Lie algebra of dimension $d$.
    For every  vertex $X\in \Delta(L)$, the set $\mathfrak{O}_X^c$ has Haar measure $0$. Hence, $\mathrm{diam}(\Delta(L))\le 2$.
\end{lemma}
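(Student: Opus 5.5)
The plan is to show that, for fixed $X\in\Delta(L)$, the co-openizer $\mathfrak{O}_X^c$ is contained in the zero set of a nonzero polynomial function on $L\cong\mathbb{Q}_p^d$. Such proper Zariski-closed subsets have Haar measure zero. To see this, induct on $d$ via Fubini: a nonzero polynomial, viewed as a polynomial in the last coordinate with coefficients in the others, vanishes for only finitely many values of that coordinate outside the null set where all its coefficients vanish. Points have measure zero in $\mathbb{Q}_p$, since $\mathbb{Q}_p$ has no isolated points.

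To produce the polynomial, fix a basis of $L$. For $Y\in L$, the subalgebra $\langle X,Y\rangle_{Lie}$ is the linear span of all Lie monomials (iterated brackets) in $X$ and $Y$. Since $\dim L=d$, it is enough to take monomials of length at most $d$: the filtration by length must stabilise once one step adds nothing new, and this happens within $d$ steps. Let $w_1,\dots,w_N$ be the finitely many Lie monomials of length at most $d$ in two letters. Then $\langle X,Y\rangle_{Lie}=L$ if and only if the $d\times N$ matrix $M(Y)$, whose columns are the coordinate vectors of $w_i(X,Y)$, has rank $d$. This holds if and only if some $d\times d$ minor of $M(Y)$ is nonzero, or equivalently if the sum of squares of these minors is nonzero; a single minor suffices in fact. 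Each entry of $M(Y)$ is a polynomial in the coordinates of $Y$ (with $X$ fixed), so each minor $P_J(Y)$ is a polynomial on $\mathbb{Q}_p^d$. Hence
\[
\mathfrak{O}_X^c=\bigcap_J \{Y\mid P_J(Y)=0\}.
\]

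Since $X$ is a vertex of $\Delta(L)$, there exists $Y_0$ with $\langle X,Y_0\rangle_{Lie}=L$. So some minor $P_J$ satisfies $P_J(Y_0)\neq0$ and is a nonzero polynomial. Therefore $\mathfrak{O}_X^c\subseteq V(P_J)$ has measure zero by the first paragraph.

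For the diameter bound, let $X,X'$ be two vertices of $\Delta(L)$. The union $\mathfrak{O}_X^c\cup\mathfrak{O}_{X'}^c$ is null, while $L$ has infinite (in particular positive) Haar measure. So there is $Z\notin\mathfrak{O}_X^c\cup\mathfrak{O}_{X'}^c$, and $Z$ is adjacent to both $X$ and $X'$. This gives distance at most $2$ (if $X=X'$ there is nothing to prove). The main point needing care is the measure-zero claim for zero sets of nonzero $p$-adic polynomials; I would handle it by the Fubini induction above, using that $L$ is $\sigma$-compact so that restricting to compact boxes $p^{-n}\mathbb{Z}_p^d$ makes the Haar measure finite.
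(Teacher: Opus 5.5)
Your proposal is correct and is essentially the paper's argument: you encode $\langle X,Y\rangle_{Lie}=L$ as a rank-$d$ condition on a matrix of Lie monomials whose entries are polynomial in $Y$, use a neighbour $Y_0$ of $X$ to get a nonzero minor so that $\mathfrak{O}_X^c$ lies in a proper Zariski-closed set, show that set is null (by your Fubini induction, where the paper cites \cite[2.5.3]{Mar91}), and then pick a common neighbour outside the union of two null sets. The only slip is your aside that ``some minor is nonzero'' is equivalent to ``the sum of squares of the minors is nonzero'': this fails over $\mathbb{Q}_p$ (e.g.\ $-1$ is a square in $\mathbb{Q}_5$), but you never use it, since you rightly work with a single minor.
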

\begin{proof}
Let $X, Y\in L$ and define the sets~$S_1=\{X, Y\}$ and $S_{i+1}=S_i\cup [S_i,S_i]$ for $i\in \mathbb{N}$. One can show that, since the dimensions of the space is~$d$, there exists some $m$ %\leq 2\uparrow (d-1)$ (here we define inductively $2\uparrow 1 =2$ and $2\uparrow (n+1) = 2^{2\uparrow n}$ for $n\ge 1$) 
such that {\color{red}the subspace generated by $S_m$ coincides with the subspace generated by $ S_{m+j}$} for all $j\in \mathbb{N}_0$. %Indeed, put
%    \[
%    \alpha_1=2 \text{ and }\alpha_{i+1}=\binom{\sum_{k=1}^i \alpha_k}{2}
%    \]
%    for $i\leq d$ and $\sum_{k=1}^d \alpha_k\leq \alpha_{d-1}^2\leq 2\uparrow (d-1)$. Moreover, we are using the inequality $n+{ \binom{n}{2}} \le n^2$ for all $n\ge 2$.
Define the matrix~$M_X(Y)$ which has as columns the coordinates of the elements of the set~$S_m$ (considering the coordinates of $X$ as constants and those of $Y$ as variables). %This is a $d\times m$ matrix. 
Note that, for any $X\in L$,  we can rewrite the set $\mathfrak{O}_X^c$ as
\[
    \mathfrak{O}_X^c=\left\{Y\in L \mid \rk M_X(Y) <d\right\}.
    \]
 It is clear that this is a closed Zariski subset of $\mathbb{Q}_p^{N}$ for an appropriate $N\in \mathbb N$, as it is defined by the vanishing of all ${\color{red}d\times d}$ minors of $M_X(Y)$.   
    %Clearly %$X\in\mathcal{V}_X$ and we need to show $\mathcal{V}_X^c\neq \empty$. Since $L$ is 2-generated, there exists $X\in L$ not isolated in $\Gamma(L)$. Hence $\mathcal{V}_X\neq \empty$ for all $X$ not isolated in $\Gamma(L)$.
%Recalling that we removed isolated vertices from $V(L)$, 
By hypothesis, for a vertex $X\in V(\Delta(L))$, %there exists $Y\in L$ such that $\langle X,Y\rangle_{Lie}=L$. This implies that the subset $\mathcal{V}_X^c$ is non-empty. 
$L\smallsetminus\mathfrak{O}_X^c \neq \emptyset$.
Therefore, $\mathfrak{O}_X^c$ is a proper Zariski closed subset.
%     (Margulis "Semisimple algebraic groups")
 %   Since $\mathcal{V}_X$ is a closed Zariski set, by 
By \cite[2.5.3]{Mar91}, we can conclude that the Haar measure of $\mathfrak{O}_X^c$ is zero. 
 
 %If $V\subset \mathbb{Q}_p^N$ then $\mu(V)=0$.
  %  $X\in L$ not isolated iff exists $Y\in \mathcal{V}_X^c$. Hence $\mathcal{V}_X\subset  \mathbb{Q}_p^N$ and we conclude with Margulis. Hence $\mu(\mathcal{V}_X^c)=1$.
 
    Now, for two vertices in $X,Y\in V(\Delta(L))$, from the previous paragraph we can conclude that $\mathfrak{O}_X^c\cup \mathfrak{O}_Y^c$ has Haar measure zero. Hence, there exists $Z\in L$ such that $\langle X, Z\rangle_{Lie}=L$ and $\langle Y, Z\rangle_{Lie}=L$ and the diameter of $\Delta(L)$ is at most 2. 
\end{proof}

Now \cref{thm:A} is an immediate consequence of the Lazard correspondence \cite[Theorem~9.10]{DDMS99}.

%\begin{corollary}\label{cor:graph_uniform}
%    Let $U$ be a uniform pro-$p$ group with $2$-generated $\mathbb{Q}_p$-Lie algebra. Then $\Dv(U)%=\Gv(U)\smallsetminus\{1\}
%    $ is connected and of diameter at most %$2$.
%\end{corollary}

\subsection{Compact \texorpdfstring{$p$}{p}-adic analytic groups with measure-zero torsion}\label{sec:improvement}

Here we prove a small improvement of \cref{thm:A}. %We restate it for convenience.

\begin{proposition}\label{prop:improvement}
Let $G$ be a $p$-adic analytic group with lower rank at most $2$ and Haar measure of torsion elements equal to zero. Suppose additionally that, for a uniform subgroup $U$ of $G$, we have that $\Dv(U)=U\smallsetminus\{1\}$. Then, $\diam(\Dv(G))\leq4$.
\end{proposition}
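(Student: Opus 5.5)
We aim to apply \cref{lem:big_open}(1) to $G$. We must show that $B=\{x\in G \mid \mu(\mathbb{O}_x)>1/2\}$ has full measure; in fact we will show that almost every $x$ has $\mu(\mathbb{O}_x)=1$.

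\emph{Reduction to $U$.} Let $x\in G$ be non-torsion. Since $|G:U|=n$ is finite, some power $x^k$ lies in $U$, with $1\le k\le n$. Because $x$ has infinite order, $x^k\neq 1$. The hypothesis $\Dv(U)=U\smallsetminus\{1\}$ says $x^k$ is a non-isolated vertex of $\Gv(U)$. So its $U$-openizer is non-empty, and its Lie image is non-isolated in $\Gamma(L)$, where $L$ is the $\mathbb{Q}_p$-Lie algebra of $U$. (Here $L$ is $2$-generated by the lower rank hypothesis, via \cite[Prop.~4.1]{LM87} as in the paper.)

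\emph{Full-measure openizer.} Set $X=\log(x^k)$. By \cref{Lem:meraviglia}, $\mathfrak{O}_X^c$ has Haar measure $0$ in $L$. Since $\log\colon U\to\log(U)$ is a homeomorphism carrying Haar measure to a multiple of additive Haar measure, the set of $y\in U$ with $\langle X,\log y\rangle_{Lie}\neq L$ is null in $U$.

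For every other $y\in U$, \cref{lem:corresp} shows $\langle x^k,y\rangle$ is open in $U$, hence in $G$, hence $\langle x,y\rangle\le_o G$. To handle $y$ in another coset $gU$, use $y^{m}\in U$ for suitable $m\le n$. Applying the same argument to $\log(y^m)$ works provided the map $y\mapsto y^m$ on $gU$ sends null sets to null sets under preimage. This holds because it is an analytic map that is generically étale off the torsion, which I take to be the main technical point.

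A cleaner alternative is to replace $y$ by $y^{n!}$, which lies in $U$ for every $y\in G$. Then check $\langle X,\log(y^{n!})\rangle_{Lie}=L$ for almost all $y$. The condition $\rk M_X(\log y^{n!})<d$ is defined by analytic functions in $y$ on each coset, and they do not vanish identically, because within $U$ the $n!$-power corresponds to multiplication by $n!$ in $L$. Hence the zero set is null, by the measure-zero property of proper analytic subsets. Either way, $\mu(\mathbb{O}_x)=1$ for every non-torsion $x$.

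\emph{Conclusion.} Since torsion has measure zero, $B\supseteq G\smallsetminus\mathrm{tor}(G)$ has measure one. \cref{lem:big_open}(1) then gives $\diam(\Dv(G))\le 4$. Concretely, a torsion non-isolated vertex has an open, hence positive-measure, openizer by \cref{lem:open_pos_measure}, so it has a non-torsion neighbour in $B$, and any two elements of $B$ share a common neighbour.

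The main obstacle is the cross-coset measure statement: showing that for $y$ outside $U$ the bad set is still null. I would handle it via analyticity of the power map and the fact that a non-identically-zero analytic function on a $p$-adic ball has null zero set.
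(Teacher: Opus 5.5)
There is a genuine gap at the cross-coset step. The claim it is meant to support, namely $\mu(\mathbb{O}_x)=1$ for \emph{every} non-torsion $x$, is false under the hypotheses of the proposition.

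Take $p=2$, $G=\mathbb{Z}_2^2\rtimes\langle\sigma\rangle$ with $\sigma$ swapping the coordinates, and $U=\mathbb{Z}_2^2$. The hypotheses hold:
\begin{itemize}
\item $\mathrm{lr}(G)\le 2$, since every $H\le_o G$ contains $H\cap U\cong\mathbb{Z}_2^2$;
\item $\Dv(U)=U\smallsetminus\{1\}$;
\item since $((a,b)\sigma)^2=(a+b,a+b)$, the torsion set is $\{1\}\cup\{(a,-a)\sigma\}$, which is null.
\end{itemize}
Now let $x=(1,1)$. Every $y=(a,b)\sigma$ centralises $x$, so $\langle x,y\rangle\cap U=\langle x,y^2\rangle$ lies in the diagonal and is not open. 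Hence $\mathbb{O}_x\cap U\sigma=\emptyset$, $\mu(\mathbb{O}_x)=1/2$, and $x\notin B$.

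Both of your justifications fail in this example. Squaring maps the whole coset $U\sigma$ onto the null diagonal, so it is not ``generically \'etale'', and preimages of null sets need not be null. Likewise, the analytic functions expressing $\rk M_X(\log y^{n!})<d$ vanish identically on $U\sigma$. The fact that $n!$-powering is multiplication by $n!$ on $\log(U)$ only gives non-vanishing on the coset $U$ itself.

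Reducing both elements to powers in $U$ also loses too much to yield $\mu(B)=1$ by this route. For generic $x,y\in U\sigma$ the subgroup $\langle x,y\rangle$ is open: it contains $w=xy^{-1}\in U$ and $w^x=\sigma(w)$, which are independent. Yet $x^2$ and $y^2$ both lie on the diagonal, so your method only gives $\mu(\mathbb{O}_x)\ge 1/2$ for such $x$.

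The fix is to drop the requirement of full measure in $G$; your within-$U$ argument is already enough.
\begin{itemize}
\item For non-torsion $x_1,x_2$, the sets $\{y\in U \mid \langle x_i^{k_i},y\rangle\le_o U\}$ are conull in $U$, so they intersect. Any $u$ in the intersection is adjacent to $x_i^{k_i}$, hence to $x_i$. So non-torsion vertices are at distance at most $2$.
\item For a non-isolated torsion $g$, $\mathbb{O}_g$ is open by \cref{lem:open_pos_measure}, hence has positive measure, hence contains a non-torsion element. That lemma holds for virtually pro-$p$ groups, which matters here: \cref{lem:big_open} is stated only for pro-$p$ groups, and $G$ need not be pro-$p$.
\end{itemize}
This gives the bounds $2$, $3$, $4$. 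It is exactly the paper's proof, which uses $n!$-th powers and a common neighbour in $U$ for the first step. For the torsion step it uses the positive-measure set of generators, together with $g$, of a fixed open subgroup $\langle g,h\rangle$.
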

\begin{proof}[Proof of \cref{thm:A}]
    Let $U$ be the uniform open subgroup of $G$ and set $\lvert G:U \rvert=n$.
%{\color{red}Is it possible that: $g\in G$ has infinite order, $g$ is not isolated in $G$, but $g^{n!}$ is isolated in $U$? Probably in this situation $g$ is only adjacent to torsion elements?}
    If $g,h\in G$ have infinite order, then $g^{n!},h^{n!}\in U$ and, by {\color{red}hypothesis}, there exists $u\in U$ that is adjacent to both $g^{n!}$ and $h^{n!}$. It is now clear that $u$ is also adjacent to both $g$ and $h$.

    Let now $g\in G$ be a non-isolated torsion element. We claim that $g$ is adjacent to an element of infinite order. Indeed, if the claim holds, by the previous paragraph, $g$ is at distance at most 3 from any other element of infinite order and of distance at most 4 to any other non-isolated torsion element.

    %Suppose now that the element $g\in G$ has finite order and it is not isolated. If $g$ is connected to some element of infinite order, by the previous paragraph it is at distance at most 3 from any other element of infinite order. On the other hand, by a similar argument, two non-isolated vertices of finite order are at distance at most 4.

    To prove the claim, %To finish the proof, 
    suppose by contradiction that $g\in G$ has finite order and it is only adjacent %connected 
    to torsion elements; take such an $h\in G$ of finite order with $A=\langle g,h\rangle \le_o G$. Under our assumptions, the set 
    \[
     S_g= \{y\in G \mid \langle g,y \rangle=A\}
    \]
     contains only torsion elements. By hypothesis, $\mu(S_g)=0$. We will now reach a contradiction by showing that the measure of $S_g$ must be positive as well.
    
    Working in the Frattini quotient $A/\Phi(A) \cong \mathbb{F}_p^2$, the set $S_g$ contains all elements of $A$ that are not multiples of $g\Phi(A)$ modulo $\Phi(A)$; that is, the measure of $S_g$ in $A$ is at least $1-\frac{1}{p}$. Hence, the measure of $S_g$ in $G$ is at least $\left(1-\frac{1}{p}\right)\mu(A)>0$, a contradiction. 
\end{proof}

{\color{red}

%We will now provide some examples that satisfy the hypotheses of \cref{prop:improvement}. We need the following recent result.

%\begin{lemma}[{\cite[Prop.~3.2.11]{Tot22}}]\label{lem:tors_meas_zero}
%    The measure of the set of torsion elements in a non-virtually-solvable $p$-adic analytic group is zero. 
%\end{lemma}

%{\color{teal}In particular, combining \cref{lem:tors_meas_zero} and \cref{thm:A}, it follows that for any non-virtually solvable $p$-adic analytic group $G$ of lower rank at most $2$ we have that $\diam(\Dv(G))\le 4$.}

%\begin{remark}\label{rmk:hypotheses}
%Let $G$ be a compact subgroup of $\mathcal{G}(\mathbb{Q}_p)$\commargherita{Do we want $\mathcal{G}(K)$ where $K$ local non-Archimedean of char $p$?}, where $\mathcal{G}$ is a simple algebraic group, and let $U$ be a uniform subgroup of $G$. By \cite[\S~1.2.2]{Boi09} and \cite[Theorem~B]{Boi09}, the Lazard $\mathbb{Q}_p$-Lie algebra $L$ of $U$ satisfies $\Delta(L)=L\smallsetminus\{0\}$. In particular, combining \cref{lem:tors_meas_zero} and this observation, it follows that for any $G$ as above we have that $\diam(\Dv(G))\le 4$.
%\end{remark}\commargherita{is this then just worse than \ref{cor:barnea_larsen} }\commatteo{we have some solvable examples below}

We will now provide some examples of solvable $p$-adic analytic groups that satisfy the hypotheses of \cref{prop:improvement}.

\begin{proposition}\label{lem:metabelian_examples}
Let $K|\mathbb{Q}_p$ be a finite extension of degree $d$, and choose an integral primitive element $\alpha\in\mathcal O_K$, so that $K=\mathbb Q_p(\alpha)$. Define the $\mathbb{Q}_p$-Lie algebra $L_{K,\alpha}= K \rtimes \langle X\rangle$ with commutator relations $[v,w]=0$  and $[v,X]=\alpha v$ for all $v,w\in K$.
Then $\Delta(L_{K,\alpha})=L_{K,\alpha}\smallsetminus\{0\}$. In particular, the uniform pro-$p$ groups $U_{K,\alpha}$ associated to the $\mathbb{Z}_p$-lattices $\Lambda_{K,\alpha}=\mathcal{O}_K\rtimes \langle pX\rangle$ via the Lazard correspondence satisfy $\Dv(U_{K,\alpha})=U\smallsetminus\{1\}$.
\end{proposition}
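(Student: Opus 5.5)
The plan is to show directly that every nonzero $Z\in L=L_{K,\alpha}$ has an explicit partner $Y$ with $\langle Z,Y\rangle_{Lie}=L$, and then transfer this to $U_{K,\alpha}$ using \cref{lem:corresp}. The structural fact I will use throughout is this: since $K$ is an abelian ideal and $[v,X]=\alpha v$, for any $Z=v+aX$ with $v\in K$ and $a\in\mathbb{Q}_p$, the operator $\mathrm{ad}(Z)$ restricted to $K$ is multiplication by $\pm a\alpha$. Because $\alpha$ is primitive, $1,\alpha,\dots,\alpha^{d-1}$ is a $\mathbb{Q}_p$-basis of $K$. Consequently, for any $0\neq w\in K$, the elements $w,\alpha w,\dots,\alpha^{d-1}w$ span $Kw=K$.

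I split into two cases according to $Z=v+aX\neq 0$.

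\emph{Case $a\neq0$.} Take $Y=1\in K$. Iterated brackets $[\cdots[1,Z],\dots,Z]$ give $(\pm a\alpha)^k\cdot 1$ for all $k\ge 0$. Since $a\neq0$, the subalgebra therefore contains $\alpha^k$ for $0\le k<d$, hence all of $K$. It then contains $aX=Z-v$, so it also contains $X$. Hence it is all of $L$.

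\emph{Case $a=0$, so $0\neq v\in K$.} Take $Y=X$. Then $\mathrm{ad}(X)^k v=(\pm\alpha)^k v$, so the subalgebra contains $Kv=K$ together with $X$, which is all of $L$.

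So no nonzero vertex is isolated. The vertex $0$ is isolated whenever $d\ge1$, since one element spans a subalgebra of dimension at most $1<d+1$. This gives $\Delta(L_{K,\alpha})=L_{K,\alpha}\smallsetminus\{0\}$.

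For the group statement, I first check that $\Lambda=\Lambda_{K,\alpha}$ is a powerful $\mathbb{Z}_p$-Lie lattice. Indeed $[\Lambda,\Lambda]=[\mathcal O_K,pX]\subseteq p\alpha\mathcal O_K\subseteq p\Lambda$, using that $\alpha$ is integral; for $p=2$ one needs $4\Lambda$, and I would replace $pX$ by $p^2X$ if needed, which does not affect the argument. Hence $U=U_{K,\alpha}$ is uniform with $\log U=\Lambda$ and $\mathbb{Q}_p$-Lie algebra $L$, by the Lazard correspondence \cite[Theorem~9.10]{DDMS99}.

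Now let $1\neq u\in U$. Then $Z=\log u\neq 0$, so by the first part there is $Y\in L$ with $\langle Z,Y\rangle_{Lie}=L$. Replacing $Y$ by $p^kY$ for large $k$ keeps this property, since scalars do not change the generated $\mathbb{Q}_p$-subalgebra, and puts $Y\in\Lambda$. Set $y=\exp(Y)\in U$ and $H=\langle u,y\rangle$. Then $\log H$ contains $Z$ and $Y$, so it generates $L$. By \cref{lem:corresp}, $H$ is open in $U$, so $u$ is adjacent to $y$. Finally, $1$ is isolated because the identity generates the trivial subgroup together with any single $y$, i.e.\ a procyclic subgroup, which is not open when $\dim L=d+1\ge2$. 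Thus $\Dv(U_{K,\alpha})=U\smallsetminus\{1\}$.

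The only delicate point is the powerfulness check on $\Lambda$ (including the case $p=2$), so that Lazard applies with the lattice as stated. The Lie-algebra computation itself is routine once one notices that $\mathrm{ad}$ acts on $K$ by multiplication by a multiple of $\alpha$.
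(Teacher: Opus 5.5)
Your proposal is correct and follows essentially the paper's argument. The paper uses the same case split on $Z=v+aX$: it pairs $v$ with $X$ when $a=0$, and when $a\neq 0$ it pairs $Z$ with a nonzero $w\in K$ (you take $w=1$), using the iterated brackets $(a\alpha)^k w$ to recover $K$ and then $X=a^{-1}(Z-v)$. Your group-level transfer fills in what the paper leaves under ``in particular'': the powerfulness check on $\Lambda_{K,\alpha}$ (including the correct observation that for $p=2$ one needs $p^2X$ unless $\alpha\in 2\mathcal{O}_K$), and the appeal to \cref{lem:corresp}.
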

\begin{proof}
    Note that $L=L_{K,\alpha}$ is generated by $X$ and any nonzero $v\in K$, so it is $2$-generated. Moreover, it is of dimension $d+1$. 

    Pick a non-zero $z=v+ aX\in L$ for $a\in \mathbb{Q}_p$ and $v\in K$. If $a=0$, then $\langle v,X\rangle =L$. If $a\neq 0$, we can choose a $0\neq w\in K$ and calculate the commutator $[w,z,\ldots,z] = a^j \alpha^j w$, where $z$ appears $j$ times. It follows that $\langle z,w \rangle = K$. Hence, we have that $v\in \langle z,w \rangle$, so $X=a^{-1}(z-v)\in \langle z,w \rangle$ and it follows that $z$ is not isolated.
\end{proof}

We now exhibit some non-uniform $p$-adic analytic pro-$p$ groups to which we can apply \cref{prop:improvement}. Consider a field extension $K\vert \mathbb{Q}_p$ such that $\zeta_p\in K$ and consider the automorphism $\varphi$ of $L_{K,\alpha}$ defined by $X\mapsto X$ and $v\mapsto \zeta_p v$ for all $v\in K$. Then the group $G=U_{K,\alpha}\rtimes \langle \varphi\rangle$ is a metabelian $p$-adic analytic group with non-trivial torsion.

We will now show that the set of torsion elements in $G$ has Haar-measure zero. Setting $A=\exp(\mathcal{O}_K)$, we note that $G\cong A\rtimes (\mathbb{Z}_p \times C_p)$. Let $g=u\varphi^j \in G$ be a torsion element with $u\in U_{K,\alpha}$ and $j\in \{0,\ldots,p-1\}$. Then, projecting modulo $A$, we see that $u\in A$; thus, the set of torsion elements is contained in $A\langle \varphi\rangle$ and it has Haar-measure zero. 

One could additionally show that all torsion elements have order $p$ and that the set of torsion elements in $G$ is exactly the union of the cosets of $A\varphi^i$ for $i=1,\ldots,p-1$ and the identity.

}

\begin{remark}
    Note that there are $p$-adic analytic groups with set of torsion elements of positive measure. %: for instance, the group $\mathbb{Z}_p\rtimes C_2$ with $C_2$ acting by multiplication by $-1$. 
    By \cite{KOTWV26}, these groups are virtually-solvable.     
In the next example we show that we cannot apply the strategy of the proof of \cref{prop:improvement} to {\color{red}all} virtually solvable groups. In fact, in a virtually solvable group, it is possible that torsion elements are adjacent to only torsion elements in the virtually generating graph.
\end{remark}

\comm{
\begin{example}
 Let $V$ be the free $\mathbb{Z}_p$-module of rank $4$ with basis  $\{\mathds{1}, i, j, k\}$ and consider the representation of the quaternion group $Q_8$ on $V$ given by: for any $v\in V$
 \[
    \rho(-1).v = -v \qquad \text{ and } \qquad g.v = gv \text{\quad for } g\in Q_8\smallsetminus\{\pm1\}. 
 \]
\commatteo{I am not sure how to write this properly, what do you think about this?}
 Define the group $G=V \rtimes Q_8$ with $Q_8$ acting via the above action. Then, the element $(0,i)$ is torsion and it is only connected to other torsion elements. First or all, $(0,i)$ is not isolated, as it is connected to $(\mathds{1},j)$. In fact, $(\mathds{1},j)^2=(\mathds{1}+j,-1)$ and $(\mathds{1},j)^4=(0,1)$. On the other hand, $(0,i)^{-2} (\mathds{1},j)^{2} = (0,-1)(\mathds{1}+j,-1)=(\mathds{1}+j,1)$. It can be checked directly that $\{\mathds{1}+j,i(\mathds{1}+j),j(\mathds{1}+j),ij(\mathds{1}+j)\}=\{\mathds{1}+j,i+k,-\mathds{1}+j,i-k\}$ are linearly independent, it follows that $\langle(0,i),(\mathds{1},j)\rangle$ is open in $G$.
 
 To conclude, observe that any element of the form $(v,g)$ for $1\neq g\in Q_8$ is of finite order; in fact, $(v,g)^4=((1+g+g^2+g^3)v,1)=(0,1)$. So elements of infinite order in $G$ are of the form $(v,1)$. It is now clear that $\langle (0,i),(v,1)\rangle \cap V$ has dimension at most 2 and hence it is not open. 
% I think that this example has connected graph of diameter 3: isolated vertices are $(v,\pm 1)$ and the rest are connected in the triangle $i \to j \to k \to i$
\end{example}
}

\begin{example}\label{ex:Q8}
    Let $V$ be the free $\mathbb{Z}_p$-module of rank $4$ with basis
    $\{\mathds{1}, i, j, k\}$. The quaternion group $Q_8$ acts by left
    multiplication on the set $\{\pm 1, \pm i, \pm j, \pm k\},$ and this
    induces an action of $Q_8$ on $V$.
    Let $G=V \rtimes Q_8$ be the corresponding semidirect product. Write any
    element of $G$ in the form $(v,x)$ with $v\in V$ and $x\in Q_8$. Note
    that if $x\neq 1,$ then $(v,x)^4=((1+x+x^2+x^3)v,1)=(0,1)$ so if
    $g\notin V,$ then $|g|\leq 4.$
    Moreover if $x\neq 1,$ then $C_V(x)=\{0\}$ and this implies that, for
    every $v\in V$, $(v,x)$ and $(0,x)$ are conjugate in $G.$
    Consider the sets $\Omega_i, \Omega_j, \Omega_k$ of the elements
    $(v,x)\in G$ with, respectively, $x=\pm i, x=\pm j, x=\pm k.$ We want to
    determine $\mathbb{O}_g$ when $g\in \Omega_i.$ First assume $g=(0,i).$
    Clearly, for every $v \in V,$ $\langle (0,i),(v,1)\rangle \cap V$ has
    dimension at most $2$ and hence it is not open. Since
    $\langle (v,-1),(0,i)\rangle=\langle (v,-1)(0,i)^2,(0,i)\rangle
    =\langle (v,1), (0,i)\rangle$, it follows also that
    $V(0,-1)\cap \mathbb{O}_g=\emptyset.$ Now consider $y=(v,j).$ Then
    $$(0,i)^2(v,j)^2=(0,-1)((\mathds{1}-j)v,-1)=((j-\mathds{1})v,0)
    \in \langle g, y\rangle.$$ 
    If $v\neq 0,$ then $w=(j-\mathds{1})v\neq 0$ and $\langle g, y\rangle$ contains $\langle (w,0),(iw,0),(jw,0),(kw,0)\rangle$.     Assume in particular that $v=x_1\mathds{1}+x_2i+x_3i+x_4k.$ Then $w, iw, jw, kw$ are linearly independent if and only if 
    
$$\det\begin{pmatrix}x_1&x_2&x_3&x_4\\-x_2&x_1&-x_4&x_3\\-x_3&x_4&x_1&-x_2\\-x_4&-x_3&x_2&x_1\end{pmatrix}=(x_1^2+x_2^2+x_3^2+x_4^2)^2\neq 0.$$
    
    It follows that $\langle g,(v,j)\rangle$ is open in $G$ if $x_1^2+x_2^2+x_3^2+x_4^2\neq 0.$
    Hence, by \cite[2.5.3]{Mar91}, $\mu(\mathbb O_g\cap \Omega_j)=\mu(\Omega_j)$.
    Similarly $\mu(\mathbb O_g\cap \Omega_k)=\mu(\Omega_k).$
    Since every element of $\Omega_i$ is conjugate either to $(0,i)$ or to
    $(0,-i)$, we deduce that for every $z\in \Omega_i$, $\mathbb{O}_z$ is
    the complement in $\Omega_j\cup \Omega_k$ of a subset of zero measure.
    Clearly we have similar statements for the elements of $\Omega_j$ and
    $\Omega_k$.
    It is clear that there is no edge in $\Gv(G)$ joining two vertices
    in $V$. Finally notice that, for every $v\in V$,
    $(v,-1)=(w,i)^2$ where $w\in V$ is such that $v=(\mathds{1}-j)w.$
    Since no element of $V$ is adjacent to $(w,i)$, it follows that no
    element of $V$ is adjacent to $(v,-1).$ Putting everything together we
    reach the following conclusion:
    \begin{enumerate}
        \item the non-isolated vertices of $\Gv(G)$ are the elements of
        $\Omega_i \cup \Omega_j \cup \Omega_k$;
        \item for every non-isolated vertex $g$ of $\Gv(G)$,
        $\mu(\mathbb{O}_g)=1/2$ and $\mathbb{O}_g$ is not closed in $G$;
        \item the graph $\Dv(G)$ is connected with diameter $2$;
        \item torsion elements in $G$ are adjacent to only torsion elements in $\Dv(G)$.
    \end{enumerate}
\end{example}

\subsection{Isolated vertices in simple algebraic groups over \texorpdfstring{$\mathbb Q_p$}{Qp}}\label{sec:isolated_simple}

In light of \cref{prop:A} and \cref{thm:A}, we might wonder what are the isolated vertices of the graph $\Gv(G)$ for a compact $p$-adic analytic group $G$. In this section we show that we can exactly determine the isolated points in some cases. We need some notation from \cite{Wei96}.

Let $X$ be a simple simply-connected affine group scheme and let $K$ be a local field of characteristic zero, i.e.\ a finite extension of $\mathbb{Q}_p$, with ring of integers $\mathcal{O} \subset K$. We denote by $\mathfrak{p}$ the unique maximal ideal of $R$ and by $F=\mathcal{O}/\mathfrak{p}$ its residue field. It is well-known that the projection $\mathcal{O}\to F$ induces a short exact sequence
\begin{equation}\label{eq:ses}
   \mathcal{M} \hookrightarrow X(\mathcal{O}) \xtwoheadrightarrow[]{\pi} X(F). 
\end{equation}
Recall that a homomorphism $f:G\to H$ is said to be a Frattini $p$-cover if $\ker f \le \Phi(G)$.

\begin{theorem}[{\cite[Thm.~B]{Wei96}}]\label{thm:thomas}
If $K\vert\mathbb{Q}_p$ is unramified and $p\ge 5$, then the sequence \eqref{eq:ses} is a Frattini $p$-cover.  
\end{theorem}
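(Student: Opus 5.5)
The statement to prove is Weigel's Theorem~B: for $K\vert\mathbb{Q}_p$ unramified and $p\ge 5$, the reduction map $\pi\colon X(\mathcal{O})\to X(F)$ has kernel $\mathcal{M}$ contained in $\Phi(X(\mathcal{O}))$. Equivalently, no proper closed subgroup $H\le X(\mathcal{O})$ satisfies $\pi(H)=X(F)$.

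The plan is to argue by contradiction. Suppose $H\le X(\mathcal{O})$ is closed with $\pi(H)=X(F)$, and write $\mathcal{M}_n$ for the $n$-th congruence subgroup, so $\mathcal{M}=\mathcal{M}_1$. Since $K$ is unramified we may take $\mathfrak p=p\mathcal{O}$. The successive quotients are then
\[
\mathcal{M}_n/\mathcal{M}_{n+1}\cong \mathfrak{g}(F)=\mathrm{Lie}(X)\otimes F ,
\]
and $X(\mathcal{O})$ acts on each of them through $X(F)$ by the adjoint action.

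The first step is to show $H\mathcal{M}_2=X(\mathcal{O})$. The intersection $H\cap\mathcal{M}_1$ maps onto an $X(F)$-submodule of $\mathfrak g(F)$. For $p\ge5$ and $X$ simple simply connected, $\mathfrak g(F)$ is irreducible up to a central piece, and I expect the small exceptions to need a separate check. So the image is either $0$ or essentially everything. If it is $0$, then $H\mathcal{M}_2/\mathcal{M}_2$ is a complement to $\mathfrak g(F)$ in $X(\mathcal{O}/p^2)$, i.e.\ the extension $X(\mathcal O/p^2)\to X(F)$ splits. One must show it does not, and I expect this to be the main obstacle.

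To rule out the splitting, the first thing to try is to exhibit a root subgroup element $u=x_\alpha(1)$ of order $p$ in $X(F)$. Every lift of $u$ to $X(\mathcal O/p^2)$ then has order $p^2$: the computation $x_\alpha(1+pt)^p=x_\alpha(p+p^2t)\neq1$ works because $\mathfrak p=p\mathcal O$ (unramified), and the identity $(gm)^p=g^p\cdot\prod m^{g^i}$ combined with the fact that the norm map $1+\mathrm{Ad}(u)+\dots+\mathrm{Ad}(u)^{p-1}$ kills the relevant piece for $p\ge5$ (since $(\mathrm{Ad}(u)-1)^{p-1}=0$ when $p-1\ge 4>$ the nilpotency degree, with care for $G_2$). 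So a splitting would give an element of order $p$ lifting $u$, which is a contradiction. This is where both hypotheses $p\ge5$ and unramifiedness enter.

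The second step is the induction: if $H\mathcal{M}_n=X(\mathcal O)$ for $n\ge2$, then $H\mathcal M_{n+1}=X(\mathcal O)$. Since $H$ surjects modulo $\mathcal M_n$, it contains elements $h\equiv m \pmod{\mathcal M_n}$ for each $m\in\mathcal M_{n-1}$. Their $p$-th powers satisfy $h^p\equiv m^p$, and the map $\mathcal M_{n-1}/\mathcal M_n\to\mathcal M_n/\mathcal M_{n+1}$ given by $x\mapsto x^p$ is a bijection; this uses $p\mathcal O=\mathfrak p$ together with the usual power-map estimates valid for $p$ odd and $n\ge1$. Hence $H\cap\mathcal M_n$ maps onto $\mathcal M_n/\mathcal M_{n+1}$. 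Because $H$ is closed and $\bigcap_n \mathcal M_n=1$, we conclude $H=X(\mathcal O)$. Therefore $\mathcal M$ lies in every maximal subgroup, so $\mathcal M\le\Phi(X(\mathcal O))$, which is the required Frattini cover property.
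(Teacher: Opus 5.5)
The paper gives no proof of this statement. It imports it from Weigel \cite[Thm.~B]{Wei96} as a black box, and also notes that Weigel's result covers some extra cases with $p=2,3$. Your proposal is a genuinely different, self-contained route: essentially Serre's classical argument for $\mathrm{SL}_2(\mathbb{Z}_p)$, $p\ge5$, extended to Chevalley groups. It has two steps:
\begin{enumerate}
\item A long root element $u=x_\alpha(1)$ satisfies $(\mathrm{Ad}(u)-1)^{p-1}=0$. This kills the norm, so every lift of $u$ to $X(\mathcal O/\mathfrak p^2)$ has $p$-th power $x_\alpha(p)\neq 1$.
\item Bijectivity of $x\mapsto x^p$ from $\mathcal M_{n-1}/\mathcal M_n$ to $\mathcal M_n/\mathcal M_{n+1}$, i.e.\ $\Phi(\mathcal M_1)=\mathcal M_2$, pushes surjectivity down the congruence filtration.
\end{enumerate}
This is sound. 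It shows exactly where the hypotheses enter: $p\ge5$ gives the vanishing norm, and unramifiedness gives both $x_\alpha(p)\not\equiv 1 \bmod \mathfrak p^2$ and the power-map bijection. In the ramified case $X(\mathcal O/\mathfrak p^2)\cong X(F[\epsilon]/(\epsilon^2))\to X(F)$ visibly splits. The price is that you must supply the module-theoretic input on $\mathfrak g(F)$ yourself, and you do not recover the small-prime cases the citation covers.

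One step needs tightening. The dichotomy ``image $0$ or everything'' fails in type $A_n$ with $p\mid n+1$. The image of $H\cap\mathcal M_1$ is only an additive subgroup, and every $\mathbb F_p$-subspace of the centre $\mathfrak z(F)$ is $X(F)$-stable. So a nonzero central image is possible a priori, and your splitting argument does not exclude it.

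Your computation already proves more than non-splitting, so use it directly:
\begin{enumerate}
\item Pick $h\in H$ with $\pi(h)=x_\alpha(1)$, $\alpha$ long. Then $h^p\in H\cap\mathcal M_1$ and $h^p\equiv x_\alpha(p) \pmod{\mathcal M_2}$. Its image in $\mathfrak g(F)$ is the root vector $e_\alpha$, which is nonzero and non-central.
\item When $F\neq\mathbb F_p$ you need an $\mathbb F_p$-statement. Since $\alpha^\vee(\lambda)$ scales $e_\alpha$ by $\lambda^2$, and squares span $F$ additively for $p\ge5$, the additive span of the $X(F)$-orbit of $e_\alpha$ is an $F$-subspace.
\item That span is therefore the $F[X(F)]$-submodule generated by $e_\alpha$. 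This is all of $\mathfrak g(F)$, because for $p\ge5$ every submodule not contained in $\mathfrak z$ is everything; this is the precise form of your ``irreducible up to a central piece'' claim that you need.
\end{enumerate}
This handles all types uniformly, and taking $\alpha$ long removes any special care for $G_2$. Your Step~2 then goes through as written.
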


 It should be noted that the theorem as stated in \cite[Thm.~B]{Wei96} holds also in some additional cases for $p=2$ and $p=3$ and our argument below also holds for these.

%By \cite[Proposition 22.13.2]{FJ23}  if $p\geq 5$ then the map $\pi_1: \mathrm{SL}_2(\Zp) \to \mathrm{SL}_2(\mathbb F_p)$ is a Frattini $p$-cover.

\begin{theorem}\label{thm:isolated}
    Let $X$ be a simple simply-connected affine group scheme, $p\ge 5$ and $K\vert \mathbb{Q}_p$ be an unramified extension. Write $\mathcal{O} \subset K$ for the ring of integers of $K$ and $F$ for its residue field. Then, $x\in X(\mathcal{O})$ is isolated in $\Gv(X(\mathcal{O}))$ if and only if $x$ has finite order and $\pi(x)\in Z(X(F))\smallsetminus\{1\}$.
\end{theorem}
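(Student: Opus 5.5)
The plan is to split according to the image $\pi(x)\in X(F)$ and the order of $x$. Two facts will be used throughout. First, by \cref{thm:thomas}, $\mathcal M\le \Phi(X(\mathcal O))$, so a closed subgroup $H$ with $\pi(H)=X(F)$ already equals $X(\mathcal O)$. Second, for $K\vert\mathbb Q_p$ unramified and $p\ge 5$, the congruence kernel $\mathcal M$ is a torsion-free uniform pro-$p$ group; this is standard, since $p$ is odd and $e(K\vert\mathbb Q_p)=1$.

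\emph{Step 1: $\pi(x)\notin Z(X(F))$ implies $x$ is not isolated.} The group $X(F)$ is quasisimple for $p\ge5$. Every nontrivial element of a finite simple group lies in a generating pair (Guralnick--Kantor). Hence I would pick $\bar y$ with $\langle \pi(x),\bar y\rangle Z(X(F))=X(F)$. Since $X(F)$ is perfect with $Z(X(F))\le\Phi(X(F))$, this gives $\langle\pi(x),\bar y\rangle=X(F)$. Lifting $\bar y$ to $y$ and applying the Frattini property yields $\langle x,y\rangle=X(\mathcal O)$.

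\emph{Step 2: if $x$ has infinite order, then $x$ is not isolated.} Choose $k$ with $x^{p^k}\in\mathcal M$; then $A=\log(x^{p^k})\neq 0$ lies in the $\mathbb Q_p$-Lie algebra $L=\mathrm{Lie}(X)(K)$. This $L$ is simple over $\mathbb Q_p$, being the restriction of scalars of a $K$-simple algebra. By Bois' result that every nonzero element of a simple Lie algebra in characteristic $0$ lies in a generating pair, there is $B$ with $\langle A,B\rangle_{Lie}=L$. After scaling $B$ by a power of $p$, I would set $u=\exp(B)\in\mathcal M$. Then \cref{lem:corresp} shows that $\langle x^{p^k},u\rangle$ is open, and therefore so is $\langle x,u\rangle$. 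Together with Step 1, this proves that if $x$ is isolated, then $x$ has finite order and $\pi(x)$ is central. Since $\mathcal M$ is torsion-free, a finite-order $x$ with $\pi(x)=1$ is trivial; the identity is clearly isolated, because procyclic subgroups are not open. So the identity has to be handled as the degenerate case that the statement implicitly excludes.

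\emph{Step 3: $x$ of finite order with $\pi(x)=z\in Z(X(F))\smallsetminus\{1\}$ implies $x$ is isolated.} This is the main obstacle. I would argue in three stages.

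\begin{enumerate}
\item For $p\ge5$ the centre $Z(X(F))$ has order coprime to $p$: it has order at most $4$ outside type $A$, and $\gcd(n,q-1)$ in type $A_{n-1}$. Moreover, $Z(X(\mathcal O))$ maps isomorphically onto $Z(X(F))$, via Teichmüller lifts of roots of unity in the centre $\mu$ of $X$. Let $c$ be the central lift of $z$.
\item In the group $\langle c\rangle\mathcal M$, the normal Sylow pro-$p$ subgroup is $\mathcal M$. Both $\langle x\rangle$ and $\langle c\rangle$ are $p'$-complements to $\mathcal M$ there. By the profinite Schur--Zassenhaus theorem they are conjugate, and the conjugation matches $x$ with $c$ because both map to $z$. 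Hence $x$ is conjugate to the central element $c$, so $x$ itself is central.
\item For any $y$, the subgroup $\langle x,y\rangle$ is then abelian. An abelian subgroup cannot be open, because $X(\mathcal O)$ has simple non-abelian Lie algebra and so is not virtually abelian.
\end{enumerate}

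The delicate points are the coprimality of the centre and the identification of central lifts. If these fail for some type, I would instead use that $x$ acts on $\mathcal M$ with trivial action on $\mathcal M/\mathcal M^p$, and so has $p'$-order acting trivially.
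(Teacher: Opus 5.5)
Your proof is correct and follows the paper's argument in all three parts. Step 1 uses Guralnick--Kantor, $Z(X(F))\le\Phi(X(F))$ and Weigel's Frattini cover. Step 2 uses Bois plus the Lazard correspondence, as in the proof of \cref{prop:improvement}. Step 3 uses a finite-order central lift together with torsion-freeness of $\mathcal M$.

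There are two small points to fix.

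In Step~2, a $k$ with $x^{p^k}\in\mathcal M$ need not exist. $X(F)$ is not a $p$-group, so if $\pi(x)$ has order prime to $p$ then no $p$-power of $x$ lies in $\mathcal M$. Use $x^{|X(F)|}$ instead, as the paper does with $g^{n!}$.

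In Step~3, the Schur--Zassenhaus detour and the coprimality check for $|Z(X(F))|$ are unnecessary. Since $c$ is central of finite order and $xc^{-1}\in\mathcal M$, the element $xc^{-1}$ has finite order because $x$ and $c$ commute. Torsion-freeness of $\mathcal M$ then gives $x=c$ directly, which is exactly the paper's argument. So the ``delicate points'' you flag at the end do not arise: you only need a finite-order central lift, which the paper takes from Milne. Your explicit final step (a central $x$ makes $\langle x,y\rangle$ abelian, hence not open) is the reason behind the paper's unexplained ``contradiction''.

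Your remark about the identity is correct and is a genuine flaw in the statement as written. The element $1$ is isolated but is not covered by the right-hand side. The paper's assertion that elements of $\mathcal M$ are not isolated should read \emph{non-trivial} elements, and the theorem should exclude $x=1$.
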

\begin{proof}
{\color{red}By \cite[\S~1.2.2]{Boi09} and \cite[Theorem~B]{Boi09}, the Lazard $\mathbb{Q}_p$-Lie algebra $L$ of $\mathcal{M}$ satisfies $\Delta(L)=L\smallsetminus\{0\}$. Hence,} elements in $\mathcal{M}$ are not isolated. Moreover, by the same argument as in the proof of \cref{prop:improvement}, also elements of infinite order in $X(\mathcal{O})$ are not isolated. We will now prove that elements that project to non-central elements in $X(F)$ via $\pi$ are not isolated. 

First, we note that $X(F)$ is quasi-simple, i.e.\ perfect and simple modulo its center. We now show that its center $Z=Z(X(F))$ is contained in the Frattini subgroup $\Phi(X(F))$ of $X(F)$. In fact, if this were not the case, we would have $X(F)=Z\cdot M$ for a maximal subgroup $M$. Hence $X(F)'= M'$ and so $M=X(F)$ which would be a contradiction.

By \cite[Corollary]{GK00} and the argument from the previous paragraph, for any $\overline{x}\notin Z(X(F))$ there exists $\overline{y}\in X(F)$ such that $\langle \overline{x},\overline{y} \rangle = X(F)$. 

By \cref{thm:thomas}, the map $\pi: X(\mathcal{O}) \to X(F)$ is a Frattini $p$-cover.
So if $\pi(x)\notin Z(X(F))$, then there exists $y\in X(\mathcal{O})$ such that %$\langle \pi(x), \pi(y)\rangle = X(F),$ and consequently 
$\langle x, y\rangle=X(\mathcal{O})$. 

We will now show that, if $x$ has finite order and $\pi(x)\in Z(X(F))\smallsetminus\{1\}$, then $x$ is isolated in $\Gv(X(\mathcal{O}))$. Suppose by contradiction that this is not the case. By \cite[pg.~517]{Mil17}, the projection map restricted to the center $\pi_{\vert Z(X(\mathcal{O}))}:Z(X(\mathcal{O})) \to Z(X(F))$ is surjective. Hence, there exists a central element $z \in Z(X(\mathcal{O}))$ such that $\pi(x) = \pi(z)$. This implies that $\pi(x z^{-1}) = 1$, meaning the element $x z^{-1}$ lies in the congruence subgroup $\mathcal{M}$. Since $x$ has finite order and $z$ is central (and also of finite order, again by  \cite[pg.~517]{Mil17}), also $x z^{-1}$ is of finite order. But $\mathcal{M}$ is torsion-free, so $x = z$, which is a contradiction. 
\end{proof}

%The center of $X(\mathcal{O})$ is finite \cite[pag. 64]{PR94}
%{\color{blue}We reduce to the case $x\notin \mathcal{M}$ and $\pi(x)\in Z(X(F))$.}

\comm{

To boot, we now show that \cref{thm:thomas} can also be used to improve the bound in \cref{thm:A}. 

\begin{proposition}
  Let $X$ be a simple simply-connected affine group scheme and let $p\ge 5$ and $K\vert \mathbb{Q}_p$ be an unramified extension. Write $\mathcal{O} \subset K$ for the ring of integers of $K$ and $F$ for its residue field. Then, the graph $\Dv(X(\mathcal{O}))$ has diameter at most $3$. 
\end{proposition}
\begin{proof}
    By the proof of \cref{thm:A} and \cref{thm:isolated}, we only have to deal with non-isolated torsion elements in $X(\mathcal{O})$; let $x$ be such an element. By \cref{thm:isolated}, it follows that $\pi(x) \notin Z(X(F))$. By \cite[Thm.~1.2]{BGK08}, the subgraph induced by non-central elements of the generating graph of the quasisimple finite group $X(F)$ is connected with diameter $2$. %Therefore, by \cref{thm:thomas}, any two torsion and non-isolated elements in $G$ are at distance at most $2$ in $\Dv(X(\mathcal{O}))$.\commargherita{
    Therefore, by \cref{thm:thomas}, the element $x$ is at distance at most 2 in $\Dv(X(\mathcal{O}))$ from any other non-isolated torsion element of $X(\mathcal{O})$.
    By \cref{lem:tors_meas_zero} and \cref{lem:open_pos_measure}, any torsion element in $X(\mathcal{O})$ is also connected to an element of infinite order. The result follows from \cref{cor:graph_uniform}. 
\end{proof}

To conclude this section, we note that \cref{thm:thomas} does not cover some choices of primes and group schemes. However, we believe that it is possible to carry out a case-by-case analysis and to obtain a description of isolated elements also in this cases. As a proof of concept we will determine below the isolated vertices of $\mathrm{SL}_2(\mathbb{Z}_p)$ for $p=2$ and $p=3$. %As a proof of concept, we will show that we can still determine the isolated vertices in this case.

\begin{example}%[Isolated vertices in $\Gv(\mathrm{SL}_2(\mathbb{Z}_3))$]
%\commandrea{I think that for the 3-elements we may apply \cref{thm:A} to the 3-Sylow subgroups; in any case I added a new file with some proof obtained with AI - so they are quit redundant - confirming that all the elements of order 4 are conjugate, all the subgroups of order 3 are conjugate and every element of order 4 generates $\mathrm{SL}_2(\mathbb{Z}_3)$ with an element of order 3. Moreover the two generators $a$ and $b$ of order 3 and 4 have the property that their product has infinite order, so it follows that every torsion element different from the central element of order 2, is adjacent to a non-torsion element, and the diameter of the graph is at most 4}
\comm{. If $o(x)=2,$ then $\langle x\rangle= Z(\mathrm{SL}_2(\mathbb F_3))$ and $x$ is isolated in the virtually generating graph. The other 2-elements in $\mathrm{SL}_2(\mathbb{F}_3)$ have order 4 and are conjugated to $A=\begin{pmatrix}0&1\\-1&0\end{pmatrix}$. We will prove that there exists $B\in \mathrm{SL}_2^1(\mathbb{Z}_3)$ such that the images of $B$, $B^{A}$ and $B^{A^2}$  %$ \mathrm{Ad}_{A^i}(B)$ $(i\ge 1)$
 generate the $\mathbb{Q}_p$-Lie algebra of $\mathrm{SL}_2(\mathbb{Z}_3)$.
 
%The matrix of the adjoint action of $A$ on the Lie algebra with respect to the basis $\{H,E,F\}$ is
%\[
%  \begin{pmatrix}
%      -1 & 0 & 0 \\ 0 & 0 & 1 \\ 0 & -1 & 0   
%  \end{pmatrix}
%\]
The images of $X=xH + yE + zF$ via $\mathrm{Ad}_A$ and $\mathrm{Ad}_{A^2}$ are $-x H+ z E- y F$ and  $x H- y E- z F$, respectively. To find an element $X\in \mathfrak{sl}_2(\mathbb{Q}_3)$ such that $X$, $\mathrm{Ad}_{A}(X)$ and $\mathrm{Ad}_{A^2}(X)$ are linearly independent it is sufficient to choose $x,y,z\in \mathbb{Q}_3$ such that
\[
  \det \begin{pmatrix}
       x & -x & x\\ y & z & -y \\ z & -y & -z 
   \end{pmatrix} = -2x(y^2+ z^2)\neq 0.
\]
}
%\commatteo{I have tried to rewrite the main points of Andrea's argument. Have a look. Only (5) is a bit fishy... It could be done with the argument that is not hidden...}
%\commandrea{Indeed we don't need (4); it follows from (5), since the statement is symmetric: not only an element of order 4 generates with one of order 3, but also an element of order 3 generates with one of order 4; it is well known that $A$ and $B$ generate $\mathrm{SL}_2(\mathbb{Z})$; a proof is given in Robinson's book; it suffices to notice that $\mathrm{SL}_2(\mathbb{Z})$ is dense in $\mathrm{SL}_2(\mathbb{Z}_3)$}
Let $p=2$ or $p=3$. We will show that the only isolated vertices in $G=\mathrm{SL}_2(\mathbb{Z}_p)$ are $I$ and $-I$. Moreover, we will show that the graph $\Dv(G)$ is connected and of diameter at most $3$. The following facts can be obtained by straightforward calculations in the group $G$: consider the matrices 
\[
  A=\begin{pmatrix}0&1\\-1&0\end{pmatrix} \qquad \text{and} \qquad B= \begin{pmatrix} 0 & 1\\ -1 & -1 \end{pmatrix}.
\]
\begin{enumerate}
    \item The possible orders of elements of finite order in $G$ are $\{1,2,3,4,6\}$.
    \item The only element of order $2$ in $G$ is $-I$ and this is isolated in the virtually generating graph.
    \item The elements $A$ and $B$ generate $G$ \cite[\S~6.2, Example~III]{Rob96}.
    \item All cyclic subgroups of order $4$ in $G$ are conjugated to $\langle A\rangle$ and all cyclic subgroups of order $3$ in $G$ are conjugated to $\langle B\rangle$.
    %\item For the $3$-elements we may apply \cref{thm:A} to the $3$-Sylow subgroups.
    \item Any element of order $4$, being conjugated to $A$ or $A^{-1}$, generates $G$ together with a conjugate of $B$ or $B^{-1}$. Symmetrically for elements of order $3$. 
    \item By \cite[Thm.~3]{Luc20}, since $G$ is prosolvable, it follows that the distance in $\Dv(G)$ of two torsion elements is at most $3$. 
    \item Finally, the two generators $A$ and $B$ of order 4 and 3 have the property that their product has infinite order, so it follows that every torsion element different from the central element of order 2, is adjacent to a non-torsion element. 
    \item By the proof of \cref{thm:A}, elements of infinite order in $G$ are at distance at most $2$. Hence, the diameter of the graph is at most 3.
\end{enumerate}

\end{example}

}

\subsection{The graph \texorpdfstring{$\Dv$}{Dvirt} for the pro-\texorpdfstring{$p$}{p} group of maximal class}
\label{Sec:maxClass}
In this section we study the connectivity of the proper virtually generating graph of the pro-$p$ group of maximal class. Note that this group is virtually isomorphic to $\mathbb{Z}_p^{p-1}$, hence, it has lower rank equal to $p-1$. We will show that its proper virtually generating graph  is connected and of diameter 2. We will write $P_i(X)=1+X+\ldots+X^{i-1}$ for $i\ge 0$. We now recall the definition of the pro-$p$ group of maximal class.

Consider a primitive $p$-th root of unity $\zeta$ over $\mathbb{Q}_p$ and form the group $G=\Zp[\zeta] \rtimes \langle s\rangle$ with $s$ acting by multiplication by $\zeta$; i.e.\ $v\mapsto \zeta \cdot v$ for any $v\in \Zp[\zeta]$. Note that $P_p(\zeta)=0$%$1+\zeta + \ldots \zeta^{p-1}=0$
, so $\mathbb{Z}_p[\zeta]$ is a free $\mathbb{Z}_p$-module of dimension $p-1$ and $s$ acts on it via the companion matrix $A$ of the polynomial $P_p(X)$. 
%\[
%  A=\begin{pmatrix}
%      0 & 0 & 0 & \ldots & 0 &-1 \\
%      1 & 0 & 0 & \ldots & 0 &-1 \\
%      0 & 1 & 0 & \ldots & 0 &-1 \\
%      \vdots &  &  & \ddots & & \vdots \\
%      0 & 0 & 0 & \ldots & 1 &-1 \\
%  \end{pmatrix}.
%\]
Note that $V=\mathbb{Q}_p\otimes_{\Zp} \Zp[\zeta]$ is a simple $\mathbb{Q}_p[C_p]$-module with respect to the matrix $A$. So any non-zero $v\in V$ generates the whole of $V$. It follows that any $0\neq v\in \Zp[\zeta]$ generates an open $\mathbb{Z}_p[C_p]$-submodule.%; that is, the subgroup $\langle v,s\rangle$ is open. 

\comm{
Note that any element $u s \in G$ with $u\neq 0$ has order $p$: 
\[
   (us)^p= (u+ \zeta\cdot u + \ldots + \zeta^{p-1} \cdot u) s^p = [(I+A + \ldots+ A^{p-1})u] 1_{C_p} = 0_{\mathbb{Z}_p[\zeta]} \ 1_{C_p} = 1_G
\]
where we used the fact that $A$ satisfies its minimal polynomial by Cayley-Hamilton.
}
We work in the cosets $\Zp[\zeta]s^i$ of the uniform open subgroup $\Zp[\zeta]$, $i=0,\ldots,p-1$. 
First observe that no two elements $u,v\in \Zp[\zeta]$ are connected if $p\ge 3$, because $\mathbb{Z}_p[\zeta]$ is of rank $p-1$. 

We will now show that all elements $v_1 s^i$ and $v_2 s^j$ are connected when $(i,j)\neq (0,0)$ and one is not a power of the other. %Without loss of generality we can suppose that $i\neq 0$. 
Note that $P_i(X)\cdot(X-1)=X^i-1$. We compute
\[
   (v_1 s^i)^j = [(1+\zeta^i + \zeta^{2i} \ldots + \zeta^{(j-1)i})v_1] s^{ij} = [P_j(\zeta^i) v_1 ] s^{ij}
   %\left[ \frac{\zeta^{ij}-1}{\zeta^i-1} v_1 \right] s^{ij} 
\]
and similarly $(v_2 s^j)^i = [ P_i(\zeta^j) v_2] s^{ij}
%\left[\frac{\zeta^{ij}-1}{\zeta^j-1} v_2 \right] s^{ij} 
$. If we multiply $(v_1 s^i)^j(v_2 s^j)^{-i}$ the top components cancel out and we are left with $P_j(\zeta^i) v_1 - P_i(\zeta^j) v_2
    %\frac{\zeta^{ij}-1}{\zeta^i-1} v_1 - \frac{\zeta^{ij}-1}{\zeta^j-1} v_2 = (\zeta^{ij}-1) \left( \frac{v_1}{\zeta^i-1} - \frac{v_2}{\zeta^j-1}  \right) 
$. %Multiplying by $(\zeta^i-1)(\zeta^j-1)$ we obtain
%\[
%    (\zeta^{ij}-1)(\zeta^i-1)v_1 - (\zeta^{ij}-1)(\zeta^j-1)v_2 
%\]
Assume without loss of generality that $i\neq 0$. Since %any non-zero element in $\mathbb{Z}_p[\zeta]$ generates an open submodule and conjugation by $v s^k$ generates the same subspace as conjugation by $s$, 
$\langle v_1 s^i,v_2 s^j \rangle=\langle  (v_1 s^i)^j(v_2 s^j)^{-i}, v_1 s^i\rangle$, if $w=(v_1 s^i)^j(v_2 s^j)^{-i}\neq 0$, then $(v_1 s^i) w (v_1 s^i)^{-1}= \zeta^i \cdot w$ and it follows that this subgroup is open. Hence, we obtain that $v_1 s^i$ and $v_2 s^j$ generate an open subgroup whenever $%\label{eq:non-open}
   P_j(\zeta^i) v_1 \neq P_i(\zeta^j) v_2$.

We now show that this equation is not satisfied exactly when $v_1 s^i$ and $v_2 s^j$ are one a power of the other; at the same time this will imply that $v_1 s^i$ and $v_2 s^j$ do not generate an open subgroup in this case. Without loss of generality, suppose that $i\neq 0$, so $P_i(\zeta)\neq 0$. Moreover, there is $k\in \{1,\ldots,p-1\}$ such that $j=ik$. Thus, using the equality $P_{ik}(\zeta) = P_i(\zeta) \cdot P_k(\zeta^i)$, we have that
\[
  (v_1 s^i)^k = \left[ %\frac{\zeta^{ik}-1}{\zeta^i-1} 
  P_k(\zeta^i) v_1 \right] s^{ik} = \left[ %\frac{\zeta^{j}-1}{\zeta^i-1} 
  \frac{ P_j(\zeta)}{P_i(\zeta)}  v_1 \right] s^{j} = v_2 s^j.
\]
It follows that all vertices in $\Zp[\zeta]s^i$ for $i\neq 0$ are connected to all other vertices, while the vertices $x\in \Zp[\zeta]$ are only connected to vertices of the other cosets. %Note that $\mu(\mathbb{O}_x) = \frac{p-1}{p}<1$ for $x\in \Zp[\zeta]$.

%%%%%%%%%%%%%%%%%%%%%%%%%%%%%%%%%%%%%%%%%%%%%%%%%%%%%%%%%%

\section{Lie algebra methods}
\label{sec:Lie-algebra-methods}

Let $G$ be a pro-$p$ group with a filtration of open normal subgroups $\{G_n\}_{n\in\mathbb{N}}$ such that $[G_n,G_m]\le G_{n+m}$ and $G_n^p \le G_{pn}$ for all $n,m$. We can define a graded Lie algebra in the following way:
\[
   L(G)= \bigoplus_{n\in \mathbb{N}} G_n/G_{n+1}.
\]
We define the sum in $L(G)$ coordinate-wise. Moreover, we can define a Lie bracket on $L(G)$ using the commutator in $G$: for $xG_{n+1}\in G_n/G_{n+1}$ and $yG_{m+1} \in G_m/G_{m+1}$
\[
   [xG_{n+1},yG_{m+1}] = [x,y] G_{n+m+1}.
\]
Furthermore, there is a natural projection $\pi:G \to L(G)$ given by $x\mapsto xG_{n+1}$ for the unique $n$ such that $x\in G_n\smallsetminus G_{n+1}$.
For a subgroup $H\le G$, we define the graded Lie algebra associated to $H$ as 
\[
  L(H)= \bigoplus_{n\in \mathbb{N}} (H\cap G_n)G_{n+1}/G_{n+1}.
\]

It is well known that the graded Lie algebra of the group $G=\mathrm{SL}_2^1(\mathbb{F}_p[\![t]\!])$ is isomorphic to $\mathfrak{sl}_2(\mathbb{F}_p) \otimes_{\mathbb{F}_p} t\mathbb{F}_p[t] $ with the natural grading.

The next lemma is straightforward.

\begin{lemma}
\label{lem:grad-Lie-alg-open}
    Let $G$ be a pro-$p$ group and consider its graded Lie algebra $L=L(G)$. If the images of two elements $x,y\in G$ generate a graded sub-Lie algebra of finite co-dimension in $L$, then $\langle x,y\rangle \le_o G$.
\end{lemma}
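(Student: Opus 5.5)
We will compare the subgroup $H=\langle x,y\rangle$ with the filtration and show that $H$ contains some $G_N$; since each $G_N$ is open, this will give $H\le_o G$. The key object is $L(H)=\bigoplus_n (H\cap G_n)G_{n+1}/G_{n+1}$. We first check that $L(H)$ is a graded Lie subalgebra of $L(G)$. It is closed under addition within each degree, because $(H\cap G_n)G_{n+1}$ is a subgroup. It is closed under brackets because $h\in H\cap G_n$ and $k\in H\cap G_m$ give $[h,k]\in H\cap G_{n+m}$, and the bracket on $L(G)$ is induced by the commutator. Moreover $\pi(x)$ and $\pi(y)$ lie in $L(H)$. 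Hence the graded subalgebra generated by $\pi(x),\pi(y)$ is contained in $L(H)$, and by hypothesis $L(H)$ has finite codimension in $L(G)$.

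Next we use finite codimension degree by degree. We assume, as is implicit in the setting, that each $G_n/G_{n+1}$ is a finite elementary abelian $p$-group, which holds since the $G_n$ are open. We also assume the filtration is separating, $\bigcap_n G_n=1$, so that it forms a base of neighbourhoods of $1$; otherwise the lemma cannot hold in the stated form. Because the codimension is finite and $L(G)$ is graded, there is $N$ such that $(H\cap G_n)G_{n+1}=G_n$ for all $n\ge N$.

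The main step is a successive approximation argument in the compact group $H$. We aim to show $G_N\subseteq H$. Take $g\in G_N$. Then there is $h_N\in H\cap G_N$ with $g\in h_NG_{N+1}$, so $h_N^{-1}g\in G_{N+1}$. Iterating, we obtain $h_N,h_{N+1},\dots$ with $h_n\in H\cap G_n$ and $(h_N\cdots h_n)^{-1}g\in G_{n+1}$. The partial products $h_N\cdots h_n$ converge to $g$, because the $G_n$ form a base of neighbourhoods of the identity. Since $H$ is closed, $g\in H$. This gives $G_N\le H$, and therefore $H$ is open.

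The only delicate point is the implicit assumptions: the filtration must be separating (equivalently, a neighbourhood base) and have finite layers. We state these explicitly as the standing conventions for the filtrations used here. The hypotheses $G_n^p\le G_{pn}$ are not needed for this argument.
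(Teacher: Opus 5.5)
Your proof is correct and follows essentially the same route as the paper: the subalgebra generated by $\pi(x),\pi(y)$ lies in $L(H)$, so $(H\cap G_m)G_{m+1}=G_m$ for all large $m$, and hence $G_N\le H$. You also make explicit what the paper leaves implicit: that $L(H)$ is a graded subalgebra, the successive-approximation and closedness argument that yields $G_N\le H$, and the assumption that the filtration is separating (which the paper needs anyway for $\pi$ to be defined).
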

\begin{proof}
Set $H=\langle x,y\rangle$. If $\pi(H)=\langle \pi(x),\pi(y)\rangle_{Lie}$ is of finite co-dimension in $L$, then there is $n\in \mathbb{N}$ such that $\pi(H)$ contains $L_n=\bigoplus_{m\ge n} G_m/G_{m+1}$. It follows that $L_n \le \pi(H) \le L(H)$ and, thus, $(H\cap G_m) G_{m+1}=G_{m+1}$ for all $m\ge n$. In particular, we have that $H\ge G_n$ and $H$ is open in $G$. 
\end{proof}

In the rest of this section we denote by $L$ the graded Lie algebra associated to $G$ with respect to a fixed filtration and by $\pi:G\to L$ the natural projection. For a fixed $x\in G$, we consider the following set
\[
   \mathbb{LO}_x = \{y\in G \mid \langle \pi(x),\pi(y)\rangle_{Lie} \text{ has finite co-dimension in } L \},
\]
which can be thought as the \emph{Lie openizer} in the graded Lie algebra associated to $G$. 

\begin{remark}
    In \cref{sec:uniform-pro-p} we considered openizers in Lie algebras associated to $p$-adic analytic groups via the Lazard correspondence. We remark that in this section we consider openizers in \emph{graded Lie algebras} that can be associated to any pro-$p$ group. Of course, usually graded Lie algebras retain much less information. See for instance \cref{rmk:nottingham}.  
\end{remark}

\begin{lemma}\label{lem:prob_sl2}
 Let $0\neq X\in \mathfrak{sl}_2(\mathbb{F}_p)$. The probability that a random element $Y\in \mathfrak{sl}_2(\mathbb{F}_p)$ together with $X$ generates $\mathfrak{sl}_2(\mathbb{F}_p)$ is at least $\frac{(p-1)^2}{p^2}$.
\end{lemma}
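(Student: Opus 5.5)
The plan is to classify the proper subalgebras of $\mathfrak{sl}_2(\mathbb{F}_p)$ that contain $X$ and count the $Y$ lying in them. I assume $p$ odd, which is the setting where the lemma will be applied; for $p=2$ the structure of $\mathfrak{sl}_2$ is degenerate.

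\emph{Reduction to a linear-algebra criterion.} The first step is the observation that $\langle X,Y\rangle_{Lie}=\mathfrak{sl}_2(\mathbb{F}_p)$ if and only if $X$, $Y$ and $[X,Y]$ are linearly independent.
\begin{itemize}
\item If they are independent, they span the $3$-dimensional algebra, so they generate it.
\item If they are dependent, the span of $X$ and $Y$ is closed under the bracket. It is therefore a subalgebra of dimension at most $2$, hence proper.
\end{itemize}
So the set of bad $Y$ is the union of two pieces: the line $\mathbb{F}_pX$, and all $2$-dimensional subalgebras $W$ containing $X$.

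\emph{Classifying the $2$-dimensional subalgebras.} Next I will classify the $2$-dimensional subalgebras of $\mathfrak{sl}_2(\mathbb{F}_p)$.
\begin{itemize}
\item \emph{No abelian ones.} For $p$ odd, the centralizer of any nonzero element is $1$-dimensional. One checks this via $\ker(\mathrm{ad}\,X)$, using the standard basis $H,E,F$ and conjugating $X$ to a normal form.
\item \emph{Nonabelian ones are Borel.} A nonabelian $2$-dimensional algebra has a basis $A,B$ with $[A,B]=B$. Then $B$ is an eigenvector of $\mathrm{ad}\,A$ with nonzero eigenvalue, so $B$ is a nilpotent matrix. Hence $W$ lies in the normalizer of $\mathbb{F}_pB$, which is the Borel subalgebra stabilizing the line $\ker B\subset\mathbb{F}_p^2$. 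By dimension, $W$ equals that Borel.
\end{itemize}
The $2$-dimensional subalgebras containing $X$ are therefore exactly the Borels (upper triangular in some basis) containing $X$, that is, the stabilizers of the lines in $\mathbb{F}_p^2$ that are invariant under $X$.

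\emph{Case analysis on $X$.}
\begin{itemize}
\item \emph{$X$ nilpotent.} $X$ has a unique invariant line, so it lies in exactly one Borel. The bad set has size $p^2$, and the good probability is $1-1/p\ge (p-1)^2/p^2$.
\item \emph{$X$ semisimple with eigenvalues in $\mathbb{F}_p$.} $X$ has two invariant lines, so it lies in two Borels, which meet in $\mathbb{F}_pX$. The bad set has size $2p^2-p$, so the good probability is
\[
1-\frac{2p-1}{p^2}=\frac{(p-1)^2}{p^2}.
\]
This case attains the bound.
\item \emph{Characteristic polynomial of $X$ irreducible over $\mathbb{F}_p$.} $X$ has no invariant line, so the bad set is just $\mathbb{F}_pX$, of size $p$. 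The good probability is $1-1/p^2$.
\end{itemize}
Every nonzero $X\in\mathfrak{sl}_2(\mathbb{F}_p)$ falls into one of these three cases, because its characteristic polynomial is $T^2+\det X$, which either has a double root $0$ or has distinct roots (since $p$ is odd).

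\emph{Main obstacle.} The delicate step is the classification of $2$-dimensional subalgebras: showing that none is abelian and that every nonabelian one is a Borel. This is where $p$ odd is used. It must be done carefully, and for nilpotent $B$ the normalizer should be computed explicitly in the basis $H,E,F$.
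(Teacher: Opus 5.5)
Your proof is correct and follows essentially the same route as the paper. Both split via the characteristic polynomial $\lambda^2+\det X$ into nilpotent, split semisimple and non-split $X$, count the Borel subalgebras containing $X$ (one, two meeting in $\mathbb{F}_pX$, or none), and find that the split case attains the minimum $(p-1)^2/p^2$ by inclusion--exclusion. Your write-up also does three things the paper does not: it justifies what the paper takes for granted (that the non-generating $Y$ are exactly those in $\mathbb{F}_pX$ or in a Borel containing $X$); it correctly identifies the split case as $-\det X$, not $\det X$, being a nonzero square; and it restricts to odd $p$, which is genuinely necessary, since for $p=2$ the element $H=I$ is central in $\mathfrak{sl}_2(\mathbb{F}_2)$ and generates with no $Y$.
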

\begin{proof}
    Consider the characteristic polynomial of $X$, this is $\lambda^2 + \det(X)=0$. If $X$ is nilpotent, then $\det(X)=0$ and $X$ belongs to exactly one Borel subalgebra 
    %\commatteo{since it is nilpotent, its kernel has dimension exactly 1, hence it fixes exactly one line and so it is in the associated Borel. The point is that Borel <-> eigenvalues} 
    of $\mathfrak{sl}_2(\mathbb{F}_p)$; the elements that together with this $X$ do not generate are exactly the elements of this Borel subalgebra. Hence, the probability that another random element together with $X$ generates is $1-\frac{p^2}{p^3} = \frac{p-1}{p}$.

    If $\det(X)$ is a non-zero quadratic residue modulo $p$, then $X$ belongs to exactly two Borel subalgebras. By the inclusion-exclusion principle, the probability that another random element together with $X$ generates is $1-\frac{2p^2-p}{p^3} = \frac{(p-1)^2}{p^2}$.

    Finally, if $\det(X)$ is not a quadratic residue modulo $p$, then any element outside the subspace generated by $X$ generates together with $X$ the whole algebra.
    %\commatteo{it is not contained in any Borel}
    Hence, the probability that another random element together with $X$ generates is $1-\frac{p}{p^3} = \frac{p^2-1}{p^2}$.

    The claim follows after comparing the values obtained.
\end{proof}

 \comm{
    
    Let's look at generation in $\mathfrak{sl}_2(\mathbb{F}_p)$. A couple $X,Y$ will fail to generate the algebra if an only if they both Lie in a Borel subalgebra. There are $p+1$ Borel subalgebras. Each of them has $p^2$ elements and $\lvert B_1\cap B_2\rvert=p$.

    Fix $X$. If $X$ is not in any Borel subalgebra then any other linearly independent element generates the algebra and the probability is $1-1/p$.
    
    If $X$ is in exactly one Borel $B$, then any element $Y$ outside of $B$ will generate with $X$, so the probability of picking such a $Y$ is $(p^3-p^2)/p^3 > \ast$.

We are left with the case when $X\in B_1\cap B_2$. Then, any element $Y$ outside $B_1\cup B_2$ will generate $\mathfrak{sl}_2(\mathbb{F}_p)$ together with $X$. Since $\lvert B_1\rvert = \lvert B_2\rvert = p^2$, the number of such elements is $p^3-2p^2+p$ (inclusion-exclusion) and the probability of picking such an element is
\[
   \frac{p^3-2p^2+p}{p^3} = \frac{p(p-1)^2}{p^3} = (1-\frac{1}{p})^2. \qedhere
\]
}

\begin{proposition}\label{prop:Lie_SL2}
Let $p\ge 5$. For all $1\neq x\in G=\mathrm{SL}_2^1(\mathbb{F}_p[\![t]\!])$, we have that $\mu(\mathbb{LO}_x)>1/2$. In particular, the graph $\Dv(G)$ is connected and of diameter at most $2$.
\end{proposition}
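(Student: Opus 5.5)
We work with the congruence filtration $G_n=\mathrm{SL}_2^n(\mathbb{F}_p[\![t]\!])$, whose graded Lie algebra is $L=\mathfrak{sl}_2(\mathbb{F}_p)\otimes t\mathbb{F}_p[t]$; the summand $L_n$ is identified with $\mathfrak{sl}_2(\mathbb{F}_p)t^n$. Fix $1\neq x\in G$. Then $\pi(x)=Xt^n$ with $0\neq X\in\mathfrak{sl}_2(\mathbb{F}_p)$ and $n\ge1$.

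For a Haar-random $y\in G$, the event $y\in G_1\smallsetminus G_2$ has probability $1-p^{-3}$. In that case $\pi(y)=Yt$, where $Y$ is uniformly distributed in $\mathfrak{sl}_2(\mathbb{F}_p)\smallsetminus\{0\}$. The key step is the following claim: if $X$ and $Y$ generate $\mathfrak{sl}_2(\mathbb{F}_p)$, then $\langle Xt^n, Yt\rangle_{Lie}$ has finite codimension in $L$.

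To prove the claim, let $M=\langle Xt^n,Yt\rangle_{Lie}$ and set $M_k=M\cap \mathfrak{sl}_2(\mathbb{F}_p)t^k$. Let $A_k\subseteq\mathfrak{sl}_2(\mathbb{F}_p)$ be defined by $M_k=A_k t^k$. Then $[A_a,A_b]\subseteq A_{a+b}$, and we have $Y\in A_1$ and $X\in A_n$. Consequently $A_k\ni \mathrm{ad}(Y)^{k-n}X$ for $k\ge n$, and $A_k$ contains all brackets of length $m$ in $X,Y$ that involve exactly $r$ copies of $X$, placed in degree $rn+(m-r)$. Since $p\ge5$, $\mathfrak{sl}_2(\mathbb{F}_p)$ is simple, and the brackets of $X$ and $Y$ of bounded length span it. One then argues that for every sufficiently large $k$ the space $A_k$ is an $\mathrm{ad}$-invariant-type subspace containing enough elements to be everything.

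A cleaner way to run this argument: $\sum_k A_k$ is stable under $\mathrm{ad}(Y)$ shifting degree by one. So once $A_{k_0}\supseteq S$ for a set $S$ with $\langle S\rangle_{\mathrm{ad}\,X,\mathrm{ad}\,Y}=\mathfrak{sl}_2$, we can use the degrees $k_0+a+bn$ for all $a,b\ge0$, together with the Frobenius-type semigroup argument ($\gcd(1,n)=1$), to show that $A_k=\mathfrak{sl}_2(\mathbb{F}_p)$ for all $k\ge k_1$. Here $\mathfrak{sl}_2$ is spanned by iterated $\mathrm{ad}$-monomials in $X,Y$ applied to $X$ and $Y$, and every monomial of a given type lands in a computable degree. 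Filling all large degrees uses the fact that $A_{k+1}\supseteq[Y,A_k]$ and $A_{k+n}\supseteq[X,A_k]$: if $A_k=\mathfrak{sl}_2$ for $n$ consecutive values of $k$, then, since $[\mathfrak{sl}_2,Y]+[\mathfrak{sl}_2,X]$ should be handled with care, I would instead use $[\mathfrak{sl}_2, \mathfrak{sl}_2]=\mathfrak{sl}_2$ via $A_a$ and $A_b$ both full. So I first produce full $A_a$ and $A_b$ in two coprime-spaced families of degrees, and then conclude by $[A_a,A_b]\subseteq A_{a+b}$ and perfectness. I expect this bookkeeping to be the main obstacle. The first thing I would try is to show directly that some $A_k$ is full (bounded-length brackets suffice by finiteness), then that $A_{k'}$ is full for $k'$ in two residue patterns, and then to close under addition.

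Given the claim, \cref{lem:prob_sl2} yields
\[
\mu(\mathbb{LO}_x)\ \ge\ (1-p^{-3})\cdot\Big(\tfrac{(p-1)^2}{p^2}\Big)\cdot\tfrac{p^3}{p^3-1}\cdot\text{(correction for }Y\neq0)\ \ge\ \tfrac{(p-1)^2}{p^2}-\tfrac1{p^3},
\]
which exceeds $1/2$ for $p\ge5$. By \cref{lem:grad-Lie-alg-open}, $\mathbb{LO}_x\subseteq\mathbb{O}_x$, so $\mu(\mathbb{O}_x)>1/2$ for all $x\neq1$. Then \cref{lem:big_open}(1), in its strong form with $B=G\smallsetminus\{1\}$, gives that any two nontrivial elements have a common neighbour; since $1$ is isolated, $\Dv(G)=G\smallsetminus\{1\}$ has diameter at most $2$.
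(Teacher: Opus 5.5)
Your route is the paper's: restrict to $y\in G_1\smallsetminus G_2$, assert that $\langle X,Y\rangle=\mathfrak{sl}_2(\mathbb{F}_p)$ forces $\langle Xt^n,Yt\rangle_{Lie}$ to have finite codimension, then combine \cref{lem:prob_sl2}, \cref{lem:grad-Lie-alg-open} and \cref{lem:big_open}. The gap is that key claim. You leave it as unfinished bookkeeping, and the paper simply asserts it as an ``observation''. It is false, so no bookkeeping can close it.

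Let $X,Y$ generate $\mathfrak{sl}_2(\mathbb{F}_p)$ and put $Z=[X,Y]$. Then $X,Y,Z$ are linearly independent, since otherwise $\mathrm{span}(X,Y)$ would be a proper subalgebra. Invariance of the trace form, which is nondegenerate since $p\neq 2$, gives $\mathrm{tr}(XZ)=\mathrm{tr}(YZ)=\mathrm{tr}([X,Z]Z)=\mathrm{tr}([Y,Z]Z)=0$. Hence $\mathrm{span}(X,Y)=Z^{\perp}$ contains $[X,Z]$ and $[Y,Z]$. So $\mathfrak{sl}_2(\mathbb{F}_p)=\mathrm{span}(X,Y)\oplus\mathbb{F}_pZ$ is a $\mathbb{Z}/2$-grading.

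For odd $n$ this places $\langle Xt^n,Yt\rangle_{Lie}$ inside $\mathrm{span}(X,Y)\otimes t\mathbb{F}_p[t^2]\oplus\mathbb{F}_pZ\otimes t^2\mathbb{F}_p[t^2]$. This is a subalgebra of codimension one in every degree. In your notation, $A_k\subseteq\mathrm{span}(X,Y)$ for odd $k$ and $A_k\subseteq\mathbb{F}_pZ$ for even $k$. Generating $\mathfrak{sl}_2$ needs brackets of different lengths, and these land in different degrees, so no $A_k$ ever becomes full.

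Even $n$ does not rescue the claim. With $e,f,h$ the standard basis, take $X=e$, $Y=f$ and $n\ge 2$. A bracket with $r$ copies of $e$ and $s$ copies of $f$ has $\mathrm{ad}\,h$-weight $2(r-s)$ and degree $rn+s\equiv s-r \pmod{n+1}$. So every $A_k$ lies in a single weight space and $\dim A_k\le 1$.

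This in fact refutes the first assertion of the statement itself. Take $x\in G_1\smallsetminus G_2$. The case $n=1$ above shows that $\mathbb{LO}_x$ contains no element of $G_1\smallsetminus G_2$. Hence $\mathbb{LO}_x\subseteq G_2$ and $\mu(\mathbb{LO}_x)\le p^{-3}<1/2$, and such $x$ form a set of measure $1-p^{-3}$.

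The paper's proof has exactly the same defect. Its ``if and only if'' already fails for $n=m=1$, for instance for $et$ and $ft$, whose generated subalgebra meets degree $2$ only in $\mathbb{F}_p\,ht^2$. Its bound $(p-1)^2/p^2$ comes entirely from the $m=1$ term.

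Only the final conclusion $\diam(\Dv(G))\le 2$ is correct. It has to come from the genuine openizers $\mathbb{O}_x$ rather than Lie openizers, namely from \cref{prop:A}: $Q(G,2)=1$ by Barnea--Larsen, followed by \cref{prop:Q_L_1} and \cref{lem:big_open}(2).
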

\begin{proof}
 Write $L=\bigoplus_n \mathfrak{sl}_2(\mathbb{F}_p)\otimes t^n$ for the associated graded Lie algebra and $\pi:G\to L$ for the natural projection. Take $1\neq x\in G$ and write $\pi(x)=X\otimes t^n$, for $X\in \mathfrak{sl}_2(\mathbb{F}_p)$ with $X\neq 0$. Consider another element $y\in G$ and its projection $\pi(y)=Y\otimes t^m$.

    We observe that $\langle \pi(x),\pi(y)\rangle_{Lie}$ is of finite co-dimension in $L$ if and only if $n$ and $m$ are coprime and  $\langle X,Y \rangle=\mathfrak{sl}_2(\mathbb{F}_p)$. Therefore, using \cref{lem:prob_sl2} we obtain
    \begin{align*}
    \mu(\mathbb{LO}_x) & =  \underset{\mathrm{gcd}(n,m)=1}{\sum_{m=1}^\infty} \mu(\{ y\in G \mid \pi(y)=Y\otimes t^m \text{ and } \langle X,Y \rangle=\mathfrak{sl}_2(\mathbb{F}_p)  \}) \\ & = \underset{\mathrm{gcd}(n,m)=1}{\sum_{m=1}^\infty} \frac{1}{p^{3(m-1)}} \cdot \mu(\{Y\in \mathfrak{sl}_2(\mathbb{F}_p) \mid \langle X,Y \rangle= \mathfrak{sl}_2(\mathbb{F}_p)\}) \\ &\ge \frac{(p-1)^2}{p^2} \underset{\mathrm{gcd}(n,m)=1}{\sum_{m=1}^\infty} \frac{1}{p^{3(m-1)}}  \ge \frac{(p-1)^2}{p^2} >\frac{1}{2}.
    \end{align*}  
   In the second equality above we have used the fact that the set of elements in $G$ with projection of degree $m$ is the subgroup $\gamma_m(G)$ and this has index $p^{3(m-1)}$ in $G$.
    
    Using \cref{lem:grad-Lie-alg-open} and \cref{lem:big_open} we can conclude that the graph $\Dv(G)$ is connected and of diameter at most $2$.
\end{proof}

The fact that $\Dv(\mathrm{SL}_2^1(\mathbb{F}_p[\![t]\!]))$ is connected and of diameter 2 was already established in {%\cref{cor:barnea_larsen}
\color{red}\cref{prop:A}} with a different method. However, we will show that the approach from this section can be used to show that also the subgroups $\mathcal{Q}^1(s,r)$ of $\mathcal{N}(\mathbb{F}_p)$ ($0<r<p^s/2$) defined in \cite{Ers04} as analogues of the group $\mathrm{SL}_2^1(\mathbb{F}_p[\![t]\!])$ have connected proper virtually generating graph of diameter two.  We will not recall the definition of the groups $\mathcal{Q}^1(s,r)$ and we will only highlight the results from \cite{Ers04} that we need. First we recall a definition from \cite{Ers04}.

\begin{definition}
    Let $G$ and $H$ be pro-$p$ groups with fixed filtrations $\{G_n\}_{n\in \mathbb N}$ and $\{H_n\}_{n\in \mathbb N}$. A map $\varphi : G \to H$ is called an \emph{approximation} if
    \begin{itemize}
        \item[(a)] $\varphi$ is bijective and $\varphi(G_n) = H_n$ for all $n$;
        \item [(b)] there exists a positive integer $k$ such that for any $x \in G_m$ and $y \in G_n$ we have $\varphi(xy) \equiv \varphi(x)\varphi(y) \mod H_{m+n+k}$.
    \end{itemize}
\end{definition}

By \cite[Prop.~4.3]{Ers04}, there exists an approximation $\varphi_{s,r}: \mathrm{SL}_2^1(\mathbb{F}_p[\![t]\!]) \to \mathcal{Q}^1(s,r)$. In the next proposition we deduce some consequences of the existence of such a map.

\begin{lemma}\label{lem:approx}
     Let $G$ and $H$ be pro-$p$ groups with fixed filtrations $\{G_n\}$ and $\{H_n\}$ and let $\varphi : G \to H$ be an approximation. Then, for any measurable subset $X\subseteq G$, we have that $\mu_G(X)=\mu_H(\varphi(X))$.
\end{lemma}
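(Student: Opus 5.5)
The plan is to show that $\varphi$ maps each coset of $G_N$ onto a coset of $H_N$, for $N$ large relative to the level, and then compare measures of cosets. Both measures are determined by their values on cosets of a basis of open normal subgroups. So it suffices to prove that $\varphi$ induces, for each $n$, a bijection between cosets of $G_n$ and cosets of $H_n$, i.e.\ $\varphi(xG_n)=\varphi(x)H_n$. Once this holds, condition (a) and bijectivity give $|G:G_n|=|H:H_n|$. Hence $\mu_G(xG_n)=\mu_H(\varphi(x)H_n)$. Then $\mu_G$ and $\varphi^{-1}_*\mu_H$ (pull-back) agree on all cosets of the $G_n$, which generate the Borel $\sigma$-algebra and form a $\pi$-system. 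So they coincide on all measurable sets, giving the claim. (Measurability of $\varphi(X)$ follows once we know $\varphi$ is a homeomorphism, which also follows from the coset statement.)

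The key step is the coset identity. Take $x\in G_m$ (with $m$ possibly $1$) and $g\in G_n$. By (b), $\varphi(xg)\equiv\varphi(x)\varphi(g)\bmod H_{m+n+k}$, and $\varphi(g)\in H_n$ by (a). Hence $\varphi(xg)\in\varphi(x)H_n$, so $\varphi(xG_n)\subseteq\varphi(x)H_n$. Since $G_1$ may not be all of $G$, I would first treat $x\in G$ arbitrary. I would apply (b) with $x\in G_1=G$, assuming the filtration starts at the whole group as in the setting of \cite{Ers04}. If not, cosets of $G_1$ are handled by the fact that $\varphi(G_1)=H_1$ and countably many cosets.

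For equality I would use a counting argument. Both $\{xG_n\}$ and $\{yH_n\}$ are finite partitions, of $G$ and $H$ respectively. By (a), $|G:G_n|=|G_1:G_n|\cdot|G:G_1|$. I would need $|G_1:G_n|=|H_1:H_n|$. For this, apply the inclusion $\varphi(xG_{j+1})\subseteq\varphi(x)H_{j+1}$ to $x\in G_j$. This shows $\varphi$ induces a map $G_j/G_{j+1}\to H_j/H_{j+1}$, which is surjective since $\varphi$ is a bijection with $\varphi(G_j)=H_j$. The same holds for $\varphi^{-1}$, which is also an approximation: (b) for $\varphi^{-1}$ follows by applying $\varphi^{-1}$ and using the inclusion just proved at deeper levels. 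Hence the finite indices match. With equal numbers of cosets and each image of a $G_n$-coset contained in an $H_n$-coset, the images partition $H$. So every inclusion is an equality.

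I expect the main obstacle to be the careful justification that $\varphi^{-1}$ also satisfies (b), or equivalently that the index counts agree. The fallback is direct. Since $\varphi$ is a bijection and the images of the $|G:G_n|$ cosets are disjoint and cover $H$, each $H_n$-coset contains at least one image. Any $H_n$-coset $\varphi(x)H_n$ then equals a disjoint union of images $\varphi(x'G_n)$. Using $\varphi(G_n)=H_n$ at $x=1$ gives equality there. Translating via (b) with the finer subgroup $H_{n+k}$ should give equality in general.
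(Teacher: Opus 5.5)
Your reduction to the coset identity $\varphi(xG_n)=\varphi(x)H_n$ matches the paper. So do your proof of the inclusion $\varphi(xG_n)\subseteq\varphi(x)H_n$ from (b) with $x\in G_0=G$, and the measure-theoretic conclusion. The gap is the reverse inclusion, i.e.\ injectivity of the induced map $G/G_n\to H/H_n$, which is the only non-trivial part of the lemma.

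Your main argument, that $\varphi^{-1}$ is again an approximation, is circular. Take $u\in H_m$, $v\in H_n$ and set $x=\varphi^{-1}(u)$, $y=\varphi^{-1}(v)$. Condition (b) gives $uv\in\varphi(xy)H_{m+n+k}$. To deduce $\varphi^{-1}(uv)\in xyG_{m+n+k}$ you need the implication $\varphi(z)\equiv\varphi(z')\bmod H_j\Rightarrow z\equiv z'\bmod G_j$. That implication is exactly injectivity on cosets. The inclusion you have proved is the converse implication, so it gives nothing here at any depth. For the same reason, your layer maps $G_j/G_{j+1}\to H_j/H_{j+1}$ are only shown to be surjective, so you only get $|H_j:H_{j+1}|\le|G_j:G_{j+1}|$ and the index counts are not matched.

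The fallback has the same defect. To transport the case $x=1$ to a coset $xG_n$, you would apply (b) to the product $x\cdot(x^{-1}x')$. The error term lies in $H_{n+k}$ only if you already know $x^{-1}x'\in G_n$. A priori $x^{-1}x'\in G_0$, and (b) gives only precision $H_k$.

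The paper fills this gap by successive approximation. Suppose $\varphi(x_1)\equiv\varphi(x_2)\bmod H_n$. Using $\varphi(G_{n+ik})=H_{n+ik}$ and (b), it builds $y_i\in G_{n+ik}$ with $\varphi(x_1y_0\cdots y_i)\equiv\varphi(x_2)\bmod H_{n+(i+1)k}$. It then passes to the limit $y\in G_n$, gets $\varphi(x_1y)=\varphi(x_2)$, and concludes $x_2=x_1y$ from injectivity of $\varphi$.

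Within your framework there is a shorter repair, using the depth of $g=x^{-1}x'$. Suppose $g\in G_m\smallsetminus G_{m+1}$ with $m<n$.
\begin{itemize}
\item By bijectivity and (a), $\varphi(g)\notin H_{m+1}$.
\item By (b), $\varphi(x)^{-1}\varphi(x')\in\varphi(g)H_{m+k}$, and $\varphi(g)H_{m+k}\subseteq\varphi(g)H_{m+1}$ because $k\ge 1$.
\item Hence $\varphi(x')\notin\varphi(x)H_{m+1}\supseteq\varphi(x)H_n$.
\end{itemize}
Either route works. This is the step where bijectivity of $\varphi$ and positivity of $k$ are genuinely used, and your proposal does not supply it.
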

\begin{proof}
    It is sufficient to show that $\varphi(xG_n)=\varphi(x)H_n$ for all $x\in G=G_0$ and all $n$.

    By hypothesis, for $y\in G_n$, $\varphi(xy)=\varphi(x)\varphi(y)h$ for some $h\in H_{n+k}$. Hence, $\varphi(xG_n) \subseteq \varphi(x)H_n$. Now, writing $G=\bigsqcup_{i=1}^d x_i G_n$ and applying $\varphi$ to this equality we obtain
    \[
      H=\varphi(G) = \bigcup_{i=1}^d \varphi(x_i G_n)\subseteq \bigcup_{i=1}^d \varphi(x_i) H_n
    \]
    and it follows that $ \lvert H:H_n\rvert \le \lvert G:G_n\rvert$. We will now prove that different cosets modulo $G_n$ map to different cosets modulo $H_n$ via $\varphi$ and this will conclude the proof.

    Suppose that $x_1,x_2\in G=G_0$ and $\varphi(x_1)\equiv \varphi(x_2) \mod H_{n}$. Then, for the element $w_0=\varphi(x_1)^{-1} \varphi(x_2) \in H_n$ there exists $y_0\in G_n$ such that $\varphi(y_0)=w_0$. It follows that 
    \[
      \varphi(x_1 y_0) \equiv \varphi(x_1) \varphi(y_0) \equiv \varphi(x_2)\mod H_{n+k}
    \]
    In the same way, we can inductively construct elements $y_i \in G_{n+ik}$ such that $\varphi(x_1 y_0 \cdots y_i) \equiv \varphi(x_2) \mod H_{n+ik}$. By construction, the sequence $(y_0\cdots y_i)_{i\in \mathbb{N}}$ converges to an element $y$ in the closed subgroup $G_n$ and 
    \[
      \varphi(x_1 y) \equiv \varphi(x_2) \mod \bigcap_{i\in \mathbb N} H_{n+ik} \Longrightarrow \varphi(x_1 y) = \varphi(x_2).
    \]
    
    Since $\varphi$ is bijective, we must have that $x_1 y=x_2$; that is, $x_1 G_n = x_2 G_n$, as claimed.
    \end{proof}

\begin{corollary}
\label{cor:groupsQ1}
  Let $p\ge 5$, then the graph $\Dv(\mathcal{Q}^1(s,r))$ is connected and of diameter at most $2$.
\end{corollary}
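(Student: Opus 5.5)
The plan is to transport the argument of \cref{prop:Lie_SL2} from $G=\mathrm{SL}_2^1(\mathbb{F}_p[\![t]\!])$ to $H=\mathcal{Q}^1(s,r)$ through the approximation $\varphi=\varphi_{s,r}\colon G\to H$ of \cite[Prop.~4.3]{Ers04}. The first step is to show that $\varphi$ induces an isomorphism of graded Lie algebras $L(G)\to L(H)$, up to a possible shift or rescaling of degrees that we will fix once and for all. Indeed, condition (a) says that $\varphi$ maps $G_n$ bijectively onto $H_n$. Condition (b), together with \cref{lem:approx}, gives that $\varphi$ induces well-defined linear bijections $G_n/G_{n+1}\to H_n/H_{n+1}$. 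We also need to know that these bijections respect commutators modulo higher terms. We expect to extract this from Ershov's construction: the point of the approximation in \cite{Ers04} is precisely that $L(\mathcal{Q}^1(s,r))\cong L(\mathrm{SL}_2^1(\mathbb{F}_p[\![t]\!]))$. We would check it directly from (b) by expanding $\varphi([x,y])$ as a product of the images of $x$, $y$ and their inverses, each step holding modulo a filtration term of strictly larger degree. With this in hand, $\pi_H(\varphi(x))$ corresponds to $\pi_G(x)$ under the Lie isomorphism. Consequently $\varphi(\mathbb{LO}_x)=\mathbb{LO}_{\varphi(x)}$, where the left-hand set is computed in $G$ and the right-hand one in $H$.

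Next, fix $1\neq h\in H$ and write $h=\varphi(x)$ with $1\neq x\in G$; this uses that $\varphi$ is bijective and that $\varphi(1)=1$, the latter following from (b). By \cref{prop:Lie_SL2} we have $\mu_G(\mathbb{LO}_x)>1/2$. The set $\mathbb{LO}_x$ is measurable, being a union of cosets of the $G_m$. Applying \cref{lem:approx} gives $\mu_H(\mathbb{LO}_h)=\mu_G(\mathbb{LO}_x)>1/2$. By \cref{lem:grad-Lie-alg-open} applied in $H$, every element of $\mathbb{LO}_h$ lies in $\mathbb{O}_h$. Hence $\mu(\mathbb{O}_h)>1/2$ for every $h\in H\smallsetminus\{1\}$.

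To conclude, note that $1$ is isolated in $\Gv(H)$, since $H$ is not procyclic. So $B=H\smallsetminus\{1\}$ in \cref{lem:big_open}(1). Any two non-identity elements then have intersecting openizers, hence a common neighbour, and so the diameter is at most $2$. The main obstacle is the first step: verifying that the degree-wise bijections induced by $\varphi$ are compatible with the Lie brackets. The difficulty is that (b) only controls products modulo $H_{m+n+k}$, and that $k$ may be larger than one. If the bracket compatibility cannot be read off from (b) alone, we will instead invoke Ershov's explicit identification of $L(\mathcal{Q}^1(s,r))$ with $\mathfrak{sl}_2(\mathbb{F}_p)\otimes t\mathbb{F}_p[t]$, possibly with respect to a regraded filtration. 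We would then redo the measure computation of \cref{prop:Lie_SL2} directly in $H$, using \cref{lem:approx} only to compute the indices $\lvert H:H_m\rvert$.
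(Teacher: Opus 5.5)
Your proposal is correct and follows the paper's proof. The paper likewise transports the bound $\mu(\mathbb{LO}_x)>1/2$ from the proof of \cref{prop:Lie_SL2} along the approximation $\varphi_{s,r}$ via \cref{lem:approx}, taking the isomorphism $L(\mathrm{SL}_2^1(\mathbb{F}_p[\![t]\!]))\cong L(\mathcal{Q}^1(s,r))$ directly from \cite[Prop.~4.2(1)]{Ers04}, and concludes with \cref{lem:big_open}. The step you flag as the main obstacle is in fact immediate from (b): apply (b) to $xy$, $yx$ and $yx\cdot[x,y]$, using $xy=yx[x,y]$, to get $\varphi([x,y])\equiv[\varphi(x),\varphi(y)]$ modulo $H_{m+n+k}\subseteq H_{m+n+1}$, so a larger $k$ only helps. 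Also, for $1$ to be isolated you need $\mathcal{Q}^1(s,r)$ to be not virtually procyclic, not merely not procyclic.
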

\begin{proof}
    The map $\varphi_{s,r}: \mathrm{SL}_2^1(\mathbb{F}_p[\![t]\!]) \to \mathcal{Q}^1(s,r)$ is an approximation by \cite[Prop.~4.3]{Ers04}. By \cite[Prop.~4.2(1)]{Ers04}, the approximation $\varphi_{s,r}$ induced an isomorphism of graded Lie algebras $L(\mathrm{SL}_2^1(\mathbb{F}_p[\![t]\!]))\cong L(\mathcal{Q}^1(s,r))$. The result follows from \cref{lem:approx} and the proof of \cref{prop:Lie_SL2}.
\end{proof}

%By Theorem~1.1 in Misha's paper \cite{Ers04} \url{https://m-ershov.github.io/Research/nottf.pdf}, the graded Lie algebra of $\mathcal{Q}^1(r,s)$ is isomorphic to that of $\mathrm{SL}_2^1(\mathbb{F}_p[\![t]\!])$, so the same result holds for these groups. {\color{red}define the groups, maybe this section should go after the Nottingham group one...}

\begin{remark}\label{rmk:nottingham}
  We note here that the above argument for $\mathcal{Q}^1(s,r)$ does not work for the Nottingham group. In fact, it can be shown that elements $g\in \mathcal{N}$ such that $D(g)\equiv_p 0$ have Lie openizer $\mathbb{LO}_g$ of measure zero. We will prove that the graph $\Dv(\mathcal{N})$ is connected using a different method in the next section. 
\end{remark}

%%%%%%%%%%%%%%%%%%%%%%%%%%%%%%%%%%%%%%%%%%%%%%%%%%%%%%%%

\section{Nottingham group}\label{sec:Nottingham}

The Nottingham group over $\mathbb{F}_p$, denoted by $\mathcal{N}=\mathcal{N}(\mathbb{F}_p)$, is the group of formal power series of the form 
    \[
        f= t +\sum_{n=1}^\infty f_n t^{n+1}\in\mathbb{F}_p[\![t]\!]
    \]
under the formal substitution. Equivalently, it is the Sylow pro-$p$ subgroup of the local field $\mathbb{F}_p(\!(t)\!)$. We assume for simplicity that $p\geqslant5$. In this section we prove that the proper virtually generating graph $\Dv(\mathcal{N})$ of the Nottingham group $\mathcal{N}$ is connected of diameter $2$. Our argument builds on some ideas presented in Section~5 of~\cite{Cam00}, which, in turn, were used by Mori~\cite{Mor22} to prove that diameter is at most 4, and takes the analysis somewhat further.

First, we recall some definitions and basic properties of the Nottingham group, see~\cite{Cam00} for more details. For every $k\in\mathbb{N}$, let
    \[
        \mathcal{N}_k=\{f\in\mathcal{N}\mid f\equiv t\mod{t^{k+1}}\}.
    \] 
Then  $\{\mathcal{N}_k\}_{k\in \mathbb{N}}$ is a filtration of open normal subgroups of $\mathcal{N}$ satisfying $|\mathcal{N}:\mathcal{N}_k|=p^{k-1}$. It can be shown that $\mathcal{N}$ is the inverse limit of the quotients $\mathcal{N}/\mathcal{N}_k$.
For any non-trivial $f\in \mathcal{N}$, the minimal $n\in\mathbb{N}$ such that $f\in\mathcal{N}_n\smallsetminus\mathcal{N}_{n+1}$ is called the \emph{depth} of~$f$, and we denote it by~$D(f)$. The depth of the identity is defined to be~$\infty$. 
One can show that for each non-trivial $f\in\mathcal{N}$ with $D(f)=n$, we have $\langle f, \mathcal{N}_{n+1}\rangle=\mathcal{N}_n$, hence $\mathcal{N}_k$ is generated by any set of elements of $\mathcal{N}$ containing an element of every depth greater than $k-1$. Consequently, given $f,g\in\mathcal{N}$, the subgroup $\langle f,g\rangle$ is open if there exists $k\in\mathbb{N}$ such that, for all integers $\overline{m}\geqslant k$, there exists an element $h\in\langle f,g\rangle$ with $D(h)=\overline{m}$.

A key step in producing elements of arbitrary large depth is taking commutators. Indeed, if  $f,g\in\mathcal{N}$ with $D(f)=n$ and $D(g)=m$, one has
\begin{equation}
    \label{eq:DepthComm}
    D([f,g])=n+m\quad \text{if }n\not\equiv m\pmod{p}.
    %D([f,g])&>n+m\quad \text{if }n\equiv m\pmod{p}.   
\end{equation}

 Similarly as in the proof of \cite[Theorem 7]{Cam00}, motivated by (\ref{eq:DepthComm}), we introduce the following combinatorial notion which will allow us to control how depths combine under repeated commutators of two elements with depths $q$ and $r$.

%{\color{red}To study the virtually generating graph of $\mathcal{N}$, we will rely on the following combinatorial definition, which arose in the proof of the fact that $\mathcal{N}$ has lower rank $2$ (cf.\ \cite[Theorem 7]{Cam00}).}

\begin{definition}
Let $\overline{m}$, $q$, $r \in \mathbb N$ and $p$ a prime. A sequence of integers $\{b_1,\ldots,b_k\}$ is called a \emph{$(q, r)$-allowable partition for
$\overline{m}$} if the following conditions hold:

\smallskip

\begin{enumerate}
    \item[(A1)] $b_i$ is either $q$ or $r$, for all $i\in \{1,\dots,k\} $.
    \item[(A2)] If $s_j =\displaystyle \sum_{i=1}^j b_i$ then $s_j \not \equiv b_{j+1}\pmod{p}$, for $j\in\{1,\dots, k-1\}$.
    \item[(A3)] $\displaystyle\sum^k_{i=1} b_i = \overline{m}$.
\end{enumerate}
\end{definition}

%In order to study the virtually generating graph, we want to understand what conditions on the depths $q$ and $r$ of $f,g\in\mathcal{N}$ ensure that $\langle f, g\rangle$ is an open subgroup.
%If there exists $N\in\mathbb{N}$ such that $m$ has a $(q,r)$-allowable partition for all $m\geqslantN$ the subgroup generated ba an element of depth $q$ and one of depth $r$ is open in $\mathcal{N}$.
%{\color{red}Later, we will search for $(q,r)$-allowable partitions for the depths $q=D(g)$ and $r=D(h)$ of two elements $g,h\in \mathcal{N}$. We will show that, under certain conditions on $q$ and $r$, any $m$ large enough admits a $(q,r)$-allowable partition (cfr.\ \cref{prop:qr}). From this it will follow that elements of certain degrees always generate an open subgroup in $\mathcal{N}$ (cfr.\ \cref{rem:Closed-Open_Nott}).}
The following lemma was mentioned in \cite[\S 5]{Cam00} without proof. Its proof was given explicitly in Italian in \cite{Mor22}. We report it here for completeness. 

\begin{proposition}\label{prop:qr}
 Let $p \ge 5$ be a prime and let $q$ and $r$ be natural numbers such that
 %\begin{enumerate}
    $$ (1)\ q, r \not\equiv_p 0. \qquad (2)\ \mathrm{gcd}(q, r) = 1. \qquad (3)\ q \not\equiv_p r. \qquad (4)\ q + r \not\equiv_p 0.$$%\commatteo{number 4 is not in Rachel's article, I wounder if it is needed... It is needed! Otherwise $k$ below could be $=1$!}
 %\end{enumerate}
Then there exists an $N\in \mathbb{N}$ such that for all $\overline{m}\ge N$, $\overline{m}$ admits a $(q, r)$-allowable partition.
\end{proposition}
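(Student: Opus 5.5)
The plan is to study the set $A\subseteq\mathbb{N}$ of integers that admit a $(q,r)$-allowable partition, and to show it contains every sufficiently large integer. Condition (A2) for appending a new part depends only on the current partial sum modulo $p$. So if $s\in A$ is the total of an allowable partition, appending $q$ gives $s+q\in A$ whenever $s\not\equiv_p q$, and appending $r$ gives $s+r\in A$ whenever $s\not\equiv_p r$. Since $q\not\equiv_p r$ by (3), at every stage at least one of the two moves is legal. The strategy is to find two fixed ``block'' increments $c_1,c_2$ with $\gcd(c_1,c_2)=1$ such that $s\in A$ implies $s+c_1\in A$ and $s+c_2\in A$ for \emph{every} $s\in A$. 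Then, starting from any single element $s_0\in A$ (for instance $s_0=q$), we get $s_0+\alpha c_1+\beta c_2\in A$ for all $\alpha,\beta\ge 0$. By the Frobenius coin problem this covers every integer $\ge s_0+(c_1-1)(c_2-1)$, which gives $N$.

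\textbf{First block, $c_1=q+r$.} Let $s\in A$; I check that one of the orders $q,r$ or $r,q$ is legal.
\begin{itemize}
\item If $s\equiv_p q$, append $r$ first. This is legal because $q\not\equiv_p r$. Then $s+r\equiv_p q+r$, and this is $\not\equiv_p q$ because $r\not\equiv_p 0$ by (1), so $q$ can be appended.
\item Symmetrically, if $s\equiv_p r$, use the order $q,r$.
\item If $s\not\equiv_p q,r$, suppose both orders fail. Then $s+q\equiv_p r$ and $s+r\equiv_p q$. Adding gives $2s\equiv_p 0$, so $s\equiv_p 0$ (as $p$ is odd), and then $q\equiv_p r$, contradicting (3).
\end{itemize}
Hence $s+q+r\in A$ for all $s\in A$.

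\textbf{Second block, $c_2=2q+r$ (or $q+2r$).} Note that $\gcd(q+r,2q+r)=\gcd(q+r,q)=\gcd(r,q)=1$ by (2), which is where hypothesis (2) enters. For $s\in A$ I will examine the three orders $qqr$, $qrq$, $rqq$ and list when each fails:
\begin{itemize}
\item $qqr$ fails iff $s\equiv q$, or $s\equiv 0$, or $s\equiv r-2q$;
\item $qrq$ fails iff $s\equiv q$, or $s\equiv r-q$, or $s\equiv -r$;
\item $rqq$ fails iff $s\equiv r$, or $s\equiv q-r$, or $s\equiv -r$.
\end{itemize}
Here all congruences are modulo $p$. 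A residue $s$ is bad only if it lies in all three failure sets simultaneously. I will run through the combinations and use (1), (3), (4) and $p\ge5$ (so that $2,3\not\equiv_p0$ and the coincidences like $2q\equiv_p 0$ or $3q\equiv_p r$-type collapses can be controlled) to show no such $s$ exists. For example, the case $s\equiv_p -r$ is already fine: $qqr$ fails there only if $-r\equiv_p q$, which contradicts (4); or if $r\equiv_p 0$, which contradicts (1); or if $q\equiv_p r$, which contradicts (3).

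\textbf{Main obstacle.} The main obstacle is this case analysis for the second block. Some residue class may remain where all three orders of $\{q,q,r\}$ fail, for instance when a relation like $3q\equiv_p 2r$ holds. If that happens, I would fall back on the block $q+2r$, which is also coprime to $q+r$, or on a longer block such as $2q+2r+q$. Alternatively I would allow the choice of block to depend on the residue of $s$, and argue separately that the reachable residues modulo $q+r$ still cover everything, using that each such block shifts by $\pm q$ modulo $q+r$. It is also possible that the failure set for one block is a single residue class, which can be avoided by first inserting a $(q+r)$-block to move off it. I would try this adjustment first before resorting to a different block.
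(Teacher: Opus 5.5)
Your approach is correct, and the step you flagged as the main obstacle closes at once, so none of the fallbacks are needed. Modulo $p$ the failure sets are $F_{qqr}=\{q,0,r-2q\}$, $F_{qrq}=\{q,r-q,-r\}$ and $F_{rqq}=\{r,q-r,-r\}$. By (1) and (3), $0\notin F_{qrq}$. Next, $r-2q\in F_{qrq}$ forces $r\equiv_p 3q$, because the other two coincidences give $q\equiv_p 0$ or $2q\equiv_p 2r$; and then $r-2q\equiv_p q$. Hence $F_{qqr}\cap F_{qrq}=\{q\}$. Finally $q\notin F_{rqq}$, since $q\not\equiv_p r$, $r\not\equiv_p 0$ and $q+r\not\equiv_p 0$. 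So the triple intersection is empty: from every $s\in A$ some ordering of $q,q,r$ is legal. Hypothesis (4) is used exactly at $s\equiv_p q$, where only $r,q,q$ is available, and relations such as $3q\equiv_p 2r$ play no role. You also have $q\in A$ (the one-term partition, where (A2) is vacuous) and $\gcd(q+r,2q+r)=1$. With the coin bound you quote, every $\overline{m}\ge q+(q+r-1)(2q+r-1)$ then admits a $(q,r)$-allowable partition.

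This is a genuinely different organisation from the paper's. The paper first uses B\'ezout to write $\overline{m}=uq+vr$ with $0\le u-v\le q+r$. It then builds a prefix with the greedy pattern $q,r,q^k,r,q^k,\dots$, where $k\ge 2$ is minimal with $kq+r\equiv_p 0$; this is where (4) enters, and it lets the prefix realise the surplus $u-v$ of $q$'s. Finally it appends the remaining equal numbers of $q$'s and $r$'s in pairs, using precisely your $(q+r)$-block argument. Your extra block $2q+r$ together with the coin problem replaces both the B\'ezout step and the greedy prefix. As a result your proof is shorter, avoids the paper's somewhat informal ``after a bounded number of steps'' count, and yields an explicit $N$. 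The paper's route, in exchange, makes transparent why (4) cannot be dropped: with $k=1$ the greedy pattern degenerates to $q,r,q,r,\dots$ and the surplus of $q$'s never grows.
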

\begin{proof}
 We first show that there is a natural number $N$ such that for all $\overline{m}\ge N$, $\overline{m} = uq + vr$, where $u, v \in \mathbb{N}_0$, $u \ge v$ and $u - v \le q+r$.
 
Since $\mathrm{gcd}(q,r)=1$, there are $\alpha, \beta \in \mathbb Z$ such that $1 = \alpha q + \beta r$ and, without loss of generality, we can suppose that $\alpha < 0$.
Dividing $\overline{m}$ by $q$ we get $\overline{m} = xq + d$ with $0\le d < q$, $x \in \mathbb N$.
Hence:
\[
\overline{m} = xq + d(\alpha q + \beta r) = (x + d\alpha)q + (d\beta)r.
\]
Now, noting that $d<q$ for any $\overline{m}$ and $\alpha,\beta$ are fixed, if $\overline{m} \ge N=q^2(\beta -\alpha+1)$, then with the above notation $x+d\alpha \ge d\beta$. Take $u=x+d\alpha$ and $v=d\beta$.  %\commatteo{$q(\beta-q\alpha+d\alpha) \ge d\beta$}

Now, we can repeatedly replace $u$ by $u-r$ and $v$ by $v+q$ without changing the sum, but maintaining $u\ge v \ge 0$. Repeating this procedure as long as $u\ge v\ge 0$, at the last step we obtain $u-v\le q+r$.%\commatteo{the next $+q$ and $-r$ would swap them, it means that they are at most $q+r$ apart}

We are ready to construct a $(q,r)$-allowable partition for $\overline{m}$. Construct a sequence according to (A1) and (A2) above, choosing $q$ whenever possible. By hypothesis (4), there exists a minimal $k>1$ such that $kq+r\equiv_p 0$.
The choice will go as follows:
\[
  q,\ r,\ \underbrace{q,\ q,\ldots,q}_k,\ r,\ \underbrace{q,\ q,\ldots,q}_k,\ r,\ q\ldots
\]%\commatteo{in the thesis $k=2$ is possible, but then the number of $q$'s wrt $r$'s does not increase! Can we put $k$ $q$'s there? I think YES, by minimality $iq+r$ all have different congruence mod $p$, then $kq+r\equiv 0$, so the next can be $q\not\equiv 0$ and then $(k+1)q+r \equiv q$, so I have to put $r$, okay}
Thus, after a bounded number of steps, say $s$, the number of $q$'s will exceed the number of
$r$'s by $u - v$. As $u - v$ is bounded independently of $\overline{m}$, by choosing $\overline{m}$ sufficiently
large we may assume that the sum $\sum_{i=1}^s b_i$ of the terms chosen so far is less than $\overline{m}$. Now, complete the sequence so that (A3) is satisfied by taking the same appropriate number of $q$'s and $r$'s. We denote the sequence just obtained by $\underline{b}=\{b_1,\ldots,b_k\}$.

Finally, we will now prove that we can rearrange the last $k-s$ terms $b_{s+1},\ldots,b_k$ of the sequence $\underline{b}$ so that also (A2) is satisfied; we proceed by adjusting two consecutive terms each the time. By construction, the sequence $\{b_1,\ldots,b_s\}$ satisfies (A2). Suppose that $\ell\ge s$ and that $\{b_1,\ldots,b_{\ell}\}$ satisfies (A2). Denote the total sum of the sequence so far by $x=b_1+\ldots+b_{\ell}$. We choose the next two terms $b_{\ell+1}$ and $b_{\ell+2}$ as follows:
\begin{itemize}[leftmargin=15pt]
    \item If $x\not\equiv_p q,r$ and $x+r \not\equiv_p q$, then we choose $b_{\ell+1}=r$ and $b_{\ell+2}=q$.
    \item If $x\not\equiv_p q,r$ and $x+r \equiv_p q$, then by (3) we have that $x+q \not\equiv_p r$. Thus, we can continue the sequence with $b_{\ell+1}=q$ and $b_{\ell+2}=r$.
    \item If $x\equiv_p q$, then, by (1), $x+r \not\equiv_p q$ and we can continue the sequence with $b_{\ell+1}=r$ and $b_{\ell+2}=q$. 
    \item If $x\equiv_p r$, then, by (1), $x+q \not\equiv_p r$ and we can continue the sequence with $b_{\ell+1}=q$ and $b_{\ell+2}=r$.
\end{itemize}
%since if $x \equiv_p q$ or $x + q \equiv_p r$, then $x \not\equiv_p r$ and $x +r \not\equiv_p q$. 
It follows that the chosen sequence satisfies conditions (A1), (A2) and (A3).
\end{proof}

\comm{

$$ \mathcal{N}_m =\{g\in \mathcal{N} \mid g \equiv t \mod t^{m+1}\}$$

\[
   \mathcal{N}_1=\mathcal{N}
\]

\begin{lemma}
    The set $X=\{g\in \mathcal{N} \mid D(g-\mathrm{LT}(g))-D(g)\ge 3\}$ has measure $\frac{1}{p^2}$.
\end{lemma}
\begin{proof}
    The condition just means that these are the elements $g=t(1+\sum_{i=k}^\infty a_i t^{i})$ with $a_k\neq 0$ and $a_{k+1}=a_{k+2}=0$. {\color{red}Since the power series are homeomorphic to a cartesian product of $\mathbb{F}_p$'s, the Haar measure on $\mathcal{N}$ can be calculated coefficient by coefficient.} If we define $E_n=\mathcal{N}_n \smallsetminus \mathcal{N}_{n+1}$, it follows that for $n\ge 1$ 
    \[
    \mu(E_n) = \left(\frac{1}{p}\right)^{\hspace{-0.1cm}n-1} \cdot \frac{p-1}{p} 
    \]
     \[
     \mu(X\cap E_n) = \left(\frac{1}{p}\right)^{\hspace{-0.1cm}2} \left(\frac{1}{p}\right)^{\hspace{-0.1cm}n-1} \cdot \frac{p-1}{p} = \frac{p-1}{p^{n+2}}.
    \]
    Therefore, $$\mu(X) = \sum_{n=1}^{\infty} \frac{p-1}{p^{n+2}} = \frac{p-1}{p^3} \sum_{n=0}^{\infty} \frac{1}{p^{n}}=\frac{p-1}{p^3} \frac{1}{1-\frac{1}{p}} =\frac{1}{p^2}. \qedhere$$
\end{proof}

}

%{\color{red}Given a non-trivial element $g\in\mathcal{N}$, we want to compute a lower bound for the measure of the openiser. To do so, we will show that for any non-trivial $g\in \mathcal{N}$ we can find certain elements of degree one whose commutators with $g$ have degrees that are allowable partitions as in the previous Proposition. }

We collect the results of the previous discussion in the following.
\begin{remark}
    \label{rem:Closed-Open_Nott}
    Combining equation (\ref{eq:DepthComm}) and \cref{prop:qr}, every closed subgroup which contains two elements $f,g\in\mathcal{N}$ of depth $D(f)=q$ and $D(g)=r$ respectively satisfying the hypotheses of \cref{prop:qr}, contains $\mathcal{N}_N$ for some $N\in\mathbb{N}$ and so it is open.
\end{remark}

We now want to compute a lower bound for the measure of the openizers of non-trivial elements of $\mathcal{N}$. To do so, we need to compute the first few terms of some  commutators. This was done by York \cite{Yor90}, whose thesis is difficult to access; we refer the reader to the more accessible \cite[Lemma 2.1]{Klo00} and report it here, as we will use it repeatedly.

\begin{lemma}
\label{lem:coeff-Comm-Nott}
Let $f,g$ be two elements of $\mathcal{N}$, and suppose that there are $n,m,l,k\in\mathbb{N}$ such that 
    \begin{align*}
        f &\equiv t+ f_nt^{n+1}+f_{n+l}t^{n+l+1}\pmod{\mathcal{N}_{n+l+1}}\\
        g &\equiv t+ g_mt^{m+1}+g_{m+k}t^{m+k+1}\pmod{\mathcal{N}_{m+k+1}}.
    \end{align*}
Let $s:=\min \{ n+m+l,\ n+2m,\ n+m+k,\ 2n+m\}$ and
\begin{align*}
    \gamma_1 & =\begin{cases}
                    (n+l-m) f_{n+l}g_m&  \text{if }  s= n+m+l\\
                    0& \text{otherwise}
                \end{cases}
\\
        \gamma_2 & =\begin{cases}
                    \left(\binom{n+1}{2}-(m+1)(n-m)\right) f_{n}g_m^2&  \text{if } s= n+2m\\
                    0& \text{otherwise}
                \end{cases}
\\
    \gamma_3 & =\begin{cases}
                    -(m+k-n) f_{n}g_{m+k}&  \text{if } s= n+m+k\\
                    0& \text{otherwise}
                \end{cases}
\\
    \gamma_4 & =\begin{cases}
                    -\left(\binom{m+1}{2}-(n+1)(m-n)\right) f_{n}^2g_m&  \text{if } s= 2n+m\\
                    0& \text{otherwise}
                \end{cases}
\end{align*}
and put $\gamma=\gamma_1+\gamma_2+\gamma_3+\gamma_4$. Then
\[ [f,g]\equiv t+(n-m)f_ng_m t^{n+m+1}+\gamma t^{s+1}\pmod{\mathcal{N}_{s+1}}. \]
\end{lemma}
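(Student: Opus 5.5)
The plan is a direct computation of $[f,g]=f^{-1}g^{-1}fg$ modulo $t^{s+2}$, i.e.\ modulo $\mathcal{N}_{s+1}$, keeping only terms of total degree at most $s+1$. Here composition is written so that $fg$ means first $f$ then $g$, matching the convention of \cite{Klo00}. Write $f=t+F$ and $g=t+G$, with
\[
F\equiv f_nt^{n+1}+f_{n+l}t^{n+l+1},\qquad G\equiv g_mt^{m+1}+g_{m+k}t^{m+k+1}.
\]
The basic tool is the Taylor expansion of a substitution,
\[
h(t+G)=h(t)+h'(t)G+\tfrac12h''(t)G^2+\cdots .
\]
For $h=t+H$ with $H$ of order $t^{a+1}$ and $G$ of order $t^{b+1}$, the term $H'G$ has order $t^{a+b+1}$. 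The term $\tfrac12H''G^2$ has order $t^{a+2b+1}$. All further terms are of strictly higher order.

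First I reduce to a congruence that avoids computing inverses. Instead of expanding $f^{-1}$ and $g^{-1}$, I use that $[f,g]=c$ is equivalent to $fg=gf\circ c$ in the appropriate order. So I compare $f\circ g$ with $g\circ f$ modulo $t^{s+2}$. Both equal $t+F+G$ plus cross terms. The cross terms are:
\begin{itemize}
\item $F'(t)G$ and $\tfrac12F''G^2$ on one side;
\item $G'(t)F$ and $\tfrac12G''F^2$ on the other.
\end{itemize}
Their difference, up to an error of higher degree, is the leading part of $[f,g]-t$.

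Next I extract the coefficients degree by degree.
\begin{enumerate}
\item In degree $n+m+1$ only the leading terms contribute, through $F'G-G'F$. This gives
\[
\bigl((n+1)-(m+1)\bigr)f_ng_m=(n-m)f_ng_m,
\]
which is the first coefficient.
\item The pairing $f_{n+l}$ with $g_m$ contributes $(n+l-m)f_{n+l}g_m$ in degree $n+m+l+1$. This is $\gamma_1$.
\item Symmetrically, $f_n$ with $g_{m+k}$ gives $-(m+k-n)f_ng_{m+k}$. This is $\gamma_3$.
\item The second-order terms give the quadratic contributions. From $\tfrac12F''G^2$ we get the coefficient $\binom{n+1}{2}f_ng_m^2$ in degree $n+2m+1$.
\end{enumerate}

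The remaining piece of the $f_ng_m^2$ coefficient comes from converting the comparison $fg$ versus $gf$ into the commutator itself. The correction $c$ of depth $n+m$ gets composed with $g$, or with $f$, and yields a further term. In degree $n+2m$ this term is $-(m+1)(n-m)f_ng_m^2$, obtained by applying the Taylor rule once more to $c\circ g$. Combined with step 4, this gives $\gamma_2$. The term $\gamma_4$ follows by the symmetric argument, using antisymmetry of the roles of $f$ and $g$.

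Each $\gamma_i$ is only relevant when its degree equals the minimum $s$. All other terms lie in $\mathcal{N}_{s+1}$, so they are discarded. This is why each $\gamma_i$ is set to $0$ unless its index attains $s$.

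I expect the main obstacle to be the bookkeeping of the quadratic terms $\gamma_2$ and $\gamma_4$. There the contributions of the inverses, or equivalently of rewriting $fg=gf\circ c$, mix with the second-derivative terms, and signs and conventions must be tracked carefully. I would first verify these two formulas on the special case $l=k=\infty$, i.e.\ on monomial-type elements, and then check that the extra terms $f_{n+l}$ and $g_{m+k}$ only enter linearly at degrees $n+m+l$ and $n+m+k$. Alternatively, since the statement is exactly \cite[Lemma~2.1]{Klo00}, one may simply cite it.
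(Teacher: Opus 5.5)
Your computation is correct and reproduces the stated formula, but it takes a different route from the paper. The paper gives no argument for this lemma: it simply quotes the congruence from \cite[Lemma~2.1]{Klo00} (going back to York's thesis), whereas you derive it.

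Concretely, put $C=[f,g]-t$ and use $f\circ g=(g\circ f)\circ[f,g]$. Taylor expansion then gives $C=(F'G-G'F+\tfrac12F''G^2-\tfrac12G''F^2)-(F'+G')C+\cdots$. The first bracket yields $(n-m)f_ng_m$, $\gamma_1$, $\gamma_3$, and the binomial parts of $\gamma_2$ and $\gamma_4$. Inserting the leading part $(n-m)f_ng_mt^{n+m+1}$ of $C$ into $(F'+G')C$ yields $-(m+1)(n-m)f_ng_m^2$ and $-(n+1)(n-m)f_n^2g_m$. Every other contribution has degree $>s+1$, because $s+1\le\min(n+2m,2n+m)+1<2n+2m+1$. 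This covers the higher coefficients of $f$ and $g$, the products $f_{n+l}g_{m+k}$, the cubic Taylor terms, $(g\circ f)''C^2$, and the remaining part of $(g\circ f)'-1$ multiplied by $C$. Your antisymmetry shortcut for $\gamma_4$ is also legitimate, since inverting $t+C$ only changes coefficients of degree at least $2(n+m)+1$. The citation buys brevity; your computation buys a self-contained check that explains each term and fixes the conventions.

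A few points to tighten.
\begin{itemize}
\item State the convention explicitly as $(fg)(t)=f(g(t))$, which is what your expansions actually use. With $(fg)(t)=g(f(t))$ the leading coefficient would be $(m-n)f_ng_m$, so the phrase ``first $f$ then $g$'' is ambiguous.
\item The correction $-(m+1)(n-m)f_ng_m^2$ comes from the term $G'(t)\,C$ in $g(f(c(t)))$, i.e.\ from $g$ applied after $c$. In your own notation $gf\circ c$, the expression ``$c\circ g$'' would mean $c(g(t))$. Its expansion gives $C'G$, with the wrong factor $n+m+1$.
\item That term lives at $t^{n+2m+1}$ (depth $n+2m$), not in degree $n+2m$.
\item Since the lemma is over $\mathbb{F}_p$, read $\tfrac12F''$ as the Hasse derivative $\sum_j\binom{j+1}{2}f_jt^{j-1}$, or compute over $\mathbb{Z}$ and reduce. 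This only matters for $p=2$.
\end{itemize}
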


We now carry out a case-by-case analysis according to congruences classes modulo~$p$ of the depth of a given element $x\in\mathcal{N}$.

\begin{lemma}\label{lem:Nott10-1}
    Let $n\in\mathbb{N}$ and suppose that $n$ is congruent to either $1$, $0$, or $-1$ modulo~$p$. Let $x\in \mathcal{N}$ with $D(x)=n$, then $\mu(\mathbb{O}_x)\ge (p-1)^2\mu(\mathcal{N}_3)$.
\end{lemma}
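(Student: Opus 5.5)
The plan is to exhibit a union of $(p-1)^2$ pairwise disjoint cosets of $\mathcal{N}_3$ inside $\mathbb{O}_x$. These cosets will be made of elements $y$ of depth $1$, i.e.\ $y\equiv t+y_1t^2+y_2t^3 \pmod{\mathcal{N}_3}$ with $y_1\neq 0$. The coefficient $y_2$ will range over a set of $p-1$ admissible values, giving $(p-1)\cdot(p-1)$ classes modulo $\mathcal{N}_3$. Membership of $y$ in $\mathbb{O}_x$ will be checked via \cref{rem:Closed-Open_Nott}. It suffices to find two elements of $\langle x,y\rangle$ whose depths $q,r$ satisfy conditions (1)--(4) of \cref{prop:qr}. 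The candidates are $y$ itself (depth $1$), $[x,y]$, and $[x,y,y]$. Since depth $1$ is coprime to everything and $1\not\equiv_p 0$, pairing $y$ with an element of depth $r$ only requires $r\not\equiv_p 0,1,-1$.

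\textbf{Case $n\equiv_p 1$.} Here $n\not\equiv_p 1$ fails, so \eqref{eq:DepthComm} does not directly give $D([x,y])$. I will compute $[x,y]$ with \cref{lem:coeff-Comm-Nott}, taking $f=x$ and $g=y$ with $m=1$, $k=1$, and $f_{n+l}$ the next nonzero coefficient of $x$. The leading term $(n-1)x_ny_1$ vanishes. For generic $y$ the depth is then $s=n+2$, coming from $\gamma_2$ and $\gamma_3$ (and possibly $\gamma_1$). The coefficient $\gamma_3=-(2-n)x_ny_2=-x_ny_2 \pmod p$ is linear in $y_2$, so for all but at most one value of $y_2$ the total $\gamma$ is nonzero. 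Then $D([x,y])=n+2\equiv_p 3$, which is $\not\equiv_p 0,\pm1$ for $p\ge5$, and the pair $(1,n+2)$ satisfies (1)--(4).

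\textbf{Case $n\equiv_p 0$.} Again the leading coefficient $n\,x_ny_1$ vanishes mod $p$, and the same expansion gives $D([x,y])=n+2$ for $y_2$ outside at most one bad value. Now $n+2\equiv_p 2$, which is fine.

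\textbf{Case $n\equiv_p -1$.} Here $n\not\equiv_p 1$, so $D([x,y])=n+1\equiv_p 0$, which is not usable. I will take $z=[x,y]$ and then $[z,y]$. Since $D(z)\equiv_p 0\not\equiv_p 1$, we get $D([z,y])=n+2\equiv_p 1$, also not usable with depth $1$. Instead I will compute the second coefficient of $[x,y]$ using \cref{lem:coeff-Comm-Nott} and choose $y_2$ so that $z$ has next coefficient in a controlled position. An alternative is to use the pair $([x,y,y],[x,y,y,y])$, of depths $n+2$ and $n+3$, which are $\equiv_p 1,2$ and coprime. Their sum is $\equiv_p 3$, so (1)--(4) hold; but this works only if each step has the expected depth, and the step from $n+2\equiv 1$ to $n+3$ is again degenerate. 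So the workable pair is $([x,y],[x,y,y])$ with depths $n+1$ and $n+2$, after checking which residues avoid $0$. This is the step I would settle by a careful computation with the lemma. In each case the outcome is at most one excluded residue for $y_2$, i.e.\ $p-1$ good values.

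The main obstacle is this degenerate bookkeeping. In every case the relevant commutator coefficient must be shown to be a nonconstant (in fact affine) function of $y_2$ with $y_1\ne0$ fixed. The subcases $l=1$ versus $l\ge2$ for $x$ change which $\gamma_i$ contribute to $\gamma$. In all subcases, $\gamma_3$ supplies the $y_2$-dependence, because its integer coefficient is a unit mod $p$ in each case. Once this is established, membership depends only on $y$ modulo $\mathcal{N}_3$, so the good set is a union of $(p-1)^2$ cosets of $\mathcal{N}_3$. This gives $\mu(\mathbb{O}_x)\ge(p-1)^2\mu(\mathcal{N}_3)$.
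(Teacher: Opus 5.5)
Your case $n\equiv_p 1$ and the final coset count match the paper. The other two cases do not go through as written. For $n\equiv_p 0$ you assert that the leading coefficient of $[x,y]$ vanishes, and it does not. In \cref{lem:coeff-Comm-Nott} the leading coefficient is $(n-m)f_ng_m$ with $m=D(y)=1$, i.e.\ $(n-1)x_ny_1\equiv_p -x_ny_1\neq 0$; equivalently, \eqref{eq:DepthComm} applies because $n\not\equiv_p 1$. Hence $D([x,y])=n+1\equiv_p 1$ for every $y_2$. This is the same residue as $D(y)$, so condition (3) of \cref{prop:qr} fails and the pair $(1,n+1)$ is useless. You need one more commutator. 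From $[x,y]\equiv t-x_ny_1t^{n+2}+2x_n(y_1^2-y_2)t^{n+3}\pmod{\mathcal{N}_{n+3}}$, the leading term of $[[x,y],y]$ vanishes, because now the two depths are congruent. The lemma then gives $[[x,y],y]\equiv t+x_ny_1(y_1^2-y_2)t^{n+4}\pmod{\mathcal{N}_{n+4}}$. So $D([[x,y],y])=n+3\equiv_p 3$ whenever $y_2\neq y_1^2$, and the pair $(1,n+3)$ satisfies (1)--(4).

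For $n\equiv_p -1$ the argument is left unresolved, and the pair you settle on, with depths $n+1$ and $n+2$, violates condition (1) since $n+1\equiv_p 0$. These two depths also do not depend on $y_2$ at all, so no ``excluded residue'' arises there. The missing step is the third commutator. One has $D([x,y])=n+1$ and $D([[x,y],y])=n+2\equiv_p 1$. The next commutator is degenerate: $[[[x,y],y],y]\equiv t+2x_ny_1^2(y_2-y_1^2)t^{n+5}\pmod{\mathcal{N}_{n+5}}$, which has depth $n+4\equiv_p 3$ exactly when $y_2\neq y_1^2$, and the pair $(1,n+4)$ then works. To get this you must carry the second coefficients through the lemma: $4x_ny_1^2-3x_ny_2-\delta_{l,1}x_{n+1}y_1$ for $[x,y]$, and $4x_ny_1(y_2-y_1^2)$ for $[[x,y],y]$. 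At the degenerate steps the $y_2$-dependence enters through $\gamma_1$, inherited from the previous commutator, as well as through $\gamma_3$. So your heuristic that $\gamma_3$ alone supplies a unit coefficient of $y_2$ is not enough. You have to check that the combined coefficient of $y_2$ is nonzero: it is $-x_ny_1$ when $n\equiv_p 0$ and $2x_ny_1^2$ when $n\equiv_p -1$.
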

\begin{proof}
There exist $l\in \mathbb{N}$, $a_n,a_{n+l}\in \mathbb{F}_p$ such that $a_n\neq 0$ and
    \begin{align*}
    x &\equiv t+a_n t^{n+1}+a_{n+l} t^{n+l+1}\pmod{ \mathcal{N}_{n+l+1}}.
    \end{align*}
Consider an element $y\in\mathcal{N}$ of depth 1, such that 
  \[
    y  \equiv t+\alpha t^2 + \beta t^3 \pmod{\mathcal{N}_{3}}
  \]
for some $(\alpha,\beta)\in\mathbb{F}_p^\ast\times\mathbb{F}_p$. We will show that, for many choices of $\alpha$ and $\beta$, the subgroup $\langle x,y\rangle$ is open.

    \smallskip

    \noindent\textit{Case 1:} $n\equiv_p 1$. By \cref{lem:coeff-Comm-Nott}, we have 
        \[
            [x,y] \equiv t +  \left( \delta_{l,1} \, a_{n+1} \alpha  +a_n \alpha^2 -a_n \beta - \delta_{n,1} a_{ n}^2 \alpha\right) t^{n+3} \pmod{\mathcal{N}_{n+3}},
        \]
    where $\delta_{i,j}$ denotes the Kronecker delta.
    Since $a_n$ is non-zero, for each $\alpha\in\mathbb{F}_p^\ast$ there are $(p-1)$ choices for $\beta\in\mathbb{F}_p$ such that $\delta_{l,1} a_{n+1}\alpha  +a_n \alpha^2 -\delta_{n,1}a_ {n}^2 \alpha\neq a_n \beta$.
    Using \cref{rem:Closed-Open_Nott}, the subgroup $\langle x,y\rangle$ is open, as it contains $y$ with $D(y)=1$ and $[x,y]$ with $D([x,y])=n+2$, which satisfy the hypotheses of \cref{prop:qr}.

    \smallskip
    
    \noindent\textit{Case 2:} $n\equiv_p 0$. By \cref{lem:coeff-Comm-Nott}, we have 
        \[
            [x,y]\equiv t- a_n \alpha t^{n+2} + 2 a_n \left( \alpha^2 -\beta\right) t^{n+3} \pmod{\mathcal{N}_{n+3}}.
        \]
    By construction, $a_n$ and $\alpha$ are both non-zero, so $D([x,y])=n+1$. However, unlike the previous case, the depths of $y$ and $[x,y]$ do not satisfy the hypotheses of \cref{prop:qr}. We apply \cref{lem:coeff-Comm-Nott} again to compute
    \[
        [[x,y],y] \equiv t+ \alpha a_n( \alpha^2 -\beta) t^{n+4}\pmod{\mathcal{N}_{n+4}}.
    \]
    For each $\alpha\in\mathbb{F}_p^\ast$, there are $(p-1)$ choices for $\beta\in\mathbb{F}_p$ such that $\alpha^2 -\beta$ is non-zero. For these choices of $\alpha$ and $\beta$, by \cref{rem:Closed-Open_Nott}, the subgroup ${\langle x,y\rangle}$ is open, as it contains $y$ with $D(y)=1$ and $[[x,y],y]$ with $D([[x,y],y])=n+3$, which satisfy the hypotheses of \cref{prop:qr}.

    \smallskip
    
    \noindent\textit{Case 3:} $n\equiv_p -1$. As for the cases above, we compute commutators to find two depths which satisfy the hypotheses of \cref{prop:qr}. In this last case, we invoke \cref{lem:coeff-Comm-Nott} three times: 
        \begin{align*}
        [x,y] &\equiv t- 2 a_n \alpha t^{n+2} +  \left( 4 a_n \alpha^2 -3 a_n \beta -  \delta_{l,1}  a_{n+1} \alpha \right) t^{n+3} \pmod {\mathcal{N}_{n+3}},\\
        [[x,y],y] &\equiv t+ 2 a_n \alpha^2 t^{n+3} + 4 a_n\alpha\left( \beta-\alpha^2 \right) t^{n+4} \pmod{ \mathcal{N}_{n+4}}, \\
        [[[x,y],y],y]&\equiv t+ 2a_n \alpha^2\left( \beta - \alpha^2 \right) t^{n+5}\pmod{\mathcal{N}_{n+5}}.
    \end{align*}
    Since $a_n$ is non-zero, for each $\alpha\in\mathbb{F}_p^\ast$ there are $(p-1)$ choices for $\beta\in\mathbb{F}_p$ such that $\beta -\alpha^2$ is non-zero, and for these $D([[[x,y],y],y])=n+4$. Using \cref{rem:Closed-Open_Nott}, the subgroup $\langle x,y\rangle$ is open, as it contains {$y$} with $D(y)=1$ and $[[[x,y],y],y]$ with $D([[[x,y],y],y])=n+4$, which satisfy the hypotheses of \cref{prop:qr}.
    
    To conclude the proof, we observe that the claim about the measure of the openizer of $x$ follows by noting that for every $z\in\mathcal{N}_3$, $yz\equiv y \pmod{\mathcal{N}_{3}}$.
\end{proof}

\comm{
\begin{lemma}\label{lem:nott1}
    Let $n,l\in\mathbb{N}$ with $n\equiv_p 1$. Let $x\in \mathcal{N}$ with $D(x)=n$ be such that 
    %($m=1$, $k=1$) 
    \begin{align*}
    x &\equiv t+a_n t^{n+1}+a_{n+l} t^{n+l+1}\pmod{ \mathcal{N}_{n+l+1}}.
    \end{align*}
    Then $\mu(\mathbb{O}_x)\ge (p-1)^2\mu(\mathcal{N}_3)$.
\end{lemma}

\begin{proof}
Consider an element $y\in\mathcal{N}$ of depth 1, such that 
  \[
    y  \equiv t+\alpha t^2 + \beta t^3 \pmod{\mathcal{N}_{3}}
  \]
for some $(\alpha,\beta)\in\mathbb{F}_p^\ast\times\mathbb{F}_p$. 
By \cref{lem:coeff-Comm-Nott}, we have 
\[
     [x,y] \equiv t +  \left( \delta_{l,1} \, a_{n+1} \alpha  +a_n \alpha^2 -a_n \beta - \delta_{n,1} a_{ n}^2 \alpha %+  \delta_1(k) a_{n+1} \alpha 
     \right) t^{n+3} \pmod{\mathcal{N}_{n+3}},
\]
where $\delta_{i,j}$ denotes the Kronecker delta.
Since $a_n$ is non-zero, for each $\alpha\in\mathbb{F}_p^\ast$ there are $(p-1)$ choices for $\beta\in\mathbb{F}_p$ such that $\delta_{l,1} a_{n+1}\alpha  +a_n \alpha^2 -\delta_{n,1}a_ {n}^2 \alpha\neq a_n \beta$.
Using \cref{rem:Closed-Open_Nott}, the subgroup $ \langle x,y\rangle$ is open as it contains $y$ with $D(y)=1$ and $[x,y]$ with $D([x,y])=n+2$ which satisfy the hypotheses of \cref{prop:qr}. 
The claim follows by noting that for every $z\in\mathcal{N}_3$, $yz\equiv y \pmod{\mathcal{N}_{3}}$.
\end{proof}

\begin{lemma}\label{lem:nott2}
    Let $n,l\in\mathbb{N}$ with $n\equiv_p 0$. Let $x\in \mathcal{N}$ with $D(x)=n$ be such that 
    %($m=1$, $k=1$) 
    \begin{align*}
    x &\equiv t+a_n t^{n+1}+a_{n+l} t^{n+l+1}\pmod{ \mathcal{N}_{n+l+1}}.
    \end{align*} 
  Then $\mu(\mathbb{O}_x)\ge (p-1)^2\mu(\mathcal{N}_3)$.
\end{lemma}

\begin{proof}
Consider an element $y\in\mathcal{N}$ of depth 1, such that 
  \[
    y  \equiv t+\alpha t^2 + \beta t^3 \pmod{\mathcal{N}_{3}}
  \]
for some $(\alpha,\beta)\in\mathbb{F}_p^\ast\times\mathbb{F}_p$. 
By \cref{lem:coeff-Comm-Nott},  we have 
    \[
        [ x,y]% & \equiv t + (n-1) a_n \alpha t^{n+2} + ( \delta_{l,1} \gamma_1+\gamma_2+ \gamma_3)t^{n+3}\pmod{\mathcal{N}_{n+3}}
         % & \left[ \left({\binom{n+1}{2}} -(1+1)(n-1)\right)a_n \alpha^2 -(1+1-n)a_n \beta %+ \delta_1(k) \cdot (1+1-n)a_{n+1}\alpha \right] t^{n+3} \mod t^{n+4} 
        \equiv t- a_n \alpha t^{n+2} + 2 a_n \left( \alpha^2 -\beta %+  \delta_1(k) a_{n+1} \alpha 
     \right) t^{n+3} \pmod{\mathcal{N}_{n+3}}.
     \]
    By construction $a_n$ and $\alpha$ are both non-zero so $D([x,y])=n+1$. Differently from the previous lemma, the depths of $y$ and $[x,y]$ do not satisfy the hypotheses of \cref{prop:qr}, so we employ again~\cref{lem:coeff-Comm-Nott} to compute
    \[
        [[x,y],y] \equiv t+ \alpha a_n( \alpha^2 -\beta) t^{n+4}\pmod{\mathcal{N}_{n+4}}.
    \]
For each $\alpha\in\mathbb{F}_p^\ast$, there are $(p-1)$ choices for $\beta\in\mathbb{F}_p$ such that $\alpha^2 -\beta$ is non-zero. 
Hence, using \cref{rem:Closed-Open_Nott}, the subgroup ${\langle x,y\rangle}$ is open as it contains $y$ with $D(y)=1$ and $[[x,y],y]$ with $D([[x,y],y])=n+3$ which satisfy the hypothesis of \cref{prop:qr}.
The claim follows by noting that for every $z\in\mathcal{N}_3$, $yz\equiv y \pmod{\mathcal{N}_{3}}$.
\end{proof}

\begin{lemma}\label{lem:nott3}
    Let $n,l\in\mathbb{N}$ with $n\equiv_p -1$. Let $x\in \mathcal{N}$ with $D(x)=n$ be such that 
    %($m=1$, $k=1$) 
    \begin{align*}
    x &\equiv t+a_n t^{n+1}+a_{n+l} t^{n+l+1}\pmod{ \mathcal{N}_{n+l+1}}.
    \end{align*} 
  Then ${\mu(\mathbb{O}_x)\ge (p-1)^2\mu(\mathcal{N}_3)}$.
\end{lemma}

\begin{proof}
Consider an element $y\in\mathcal{N}$ of depth 1, such that 
  \[
    y  \equiv t+\alpha t^2 + \beta t^3 \pmod{\mathcal{N}_{3}}
  \]
for some $(\alpha,\beta)\in\mathbb{F}_p^\ast\times\mathbb{F}_p$.
Invoking \cref{lem:coeff-Comm-Nott} three times we have 
    \begin{align*}
        [x,y] &\equiv t- 2 a_n \alpha t^{n+2} +  \left( 4 a_n \alpha^2 -3 a_n \beta -  \delta_{l,1}  a_{n+1} \alpha \right) t^{n+3} \pmod {\mathcal{N}_{n+3}}\\
        [[x,y],y] &\equiv t+ 2 a_n \alpha^2 t^{n+3} + 4 a_n\alpha\left( \beta-\alpha^2 \right) t^{n+4} \pmod{ \mathcal{N}_{n+4}} \\
        [[[x,y],y],y]&\equiv t+ 2a_n \alpha^2\left( \beta - \alpha^2 \right) t^{n+5}\pmod{\mathcal{N}_{n+5}}.
    \end{align*}
Since $a_n$ is non-zero, for each $\alpha\in\mathbb{F}_p^\ast$ there are $(p-1)$ choices for $\beta\in\mathbb{F}_p$ such that $\alpha^2-\beta $ is non-zero, and for these $D([[[x,y],y],y])=n+4$.

Employing \cref{rem:Closed-Open_Nott}, the subgroup $\langle x,y\rangle$ is open as it contains with $D(y)=1$ and $[[[x,y],y],y]$ with $D([[[x,y],y],y])=n+4$.
The claim follows noting that for every $z\in\mathcal{N}_3$, $yz\equiv y \pmod{\mathcal{N}_{3}}$.
\end{proof}
}

\begin{proposition}\label{prop:Nott}
Let $p\ge 5$ be a prime. Then, for any non-trivial element $ x\in \mathcal{N}$ we have $$\mu(\mathbb{O}_x)\ge (p-1)^2\mu(\mathcal{N}_3)=\frac{(p-1)^2}{p^2}>\frac{1}{2}.$$
In particular, the graph  $\Dv(\mathcal{N})=\Gv(\mathcal{N})\setminus \{ 1\}$ is connected of diameter 2.
\end{proposition}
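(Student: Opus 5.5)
The plan is to reduce everything to the depth $n=D(x)$ of a non-trivial $x$, split according to the residue of $n$ modulo $p$, and produce in each case a coset of $\mathcal{N}_3$ worth of partners $y$ of depth $1$. Lemma~\ref{lem:Nott10-1} already handles $n\equiv_p 0,\pm1$. It remains to treat $n\not\equiv_p 0,\pm 1$; there I take the same family of test elements $y\equiv t+\alpha t^2+\beta t^3 \pmod{\mathcal{N}_3}$ with $\alpha\neq 0$.

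Since $n\not\equiv_p 1$, equation~\eqref{eq:DepthComm} gives $D([x,y])=n+1$. The natural pair to try is $y$ (depth $1$) together with $x$ (depth $n$), or $y$ together with $[x,y]$. I check the hypotheses of Proposition~\ref{prop:qr} for $(q,r)=(1,n)$:
\begin{itemize}
\item $1,n\not\equiv_p 0$;
\item $\gcd(1,n)=1$;
\item $n\not\equiv_p 1$;
\item $n+1\not\equiv_p 0$.
\end{itemize}
All four hold because $n\not\equiv_p 0,\pm1$. By Remark~\ref{rem:Closed-Open_Nott}, $\langle x,y\rangle$ is open for \emph{every} $y$ of depth $1$, with no condition on $\beta$ at all. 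The set of such $y$ is $\mathcal{N}_1\smallsetminus\mathcal{N}_2$, of measure $(p-1)/p=(p-1)p\,\mu(\mathcal{N}_3)$, which exceeds $(p-1)^2\mu(\mathcal{N}_3)$. So in this generic case the bound is immediate, and the only real work is the three exceptional residues, which are done in Lemma~\ref{lem:Nott10-1}. There one needs the coefficient computations of Lemma~\ref{lem:coeff-Comm-Nott} to find an iterated commutator whose depth, paired with $1$, satisfies Proposition~\ref{prop:qr}; the count is $(p-1)$ values of $\alpha$ times $(p-1)$ values of $\beta$, each giving a full $\mathcal{N}_3$-coset.

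Next I evaluate the constant. Since $|\mathcal{N}:\mathcal{N}_3|=p^2$, we get $(p-1)^2\mu(\mathcal{N}_3)=(p-1)^2/p^2$. For $p\ge5$ this is at least $16/25>1/2$.

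For the graph statement I apply Lemma~\ref{lem:big_open}(1) with $B=\mathcal{N}\smallsetminus\{1\}$. The identity is isolated, since $\langle 1,y\rangle$ is procyclic and $\mathcal{N}$ is not virtually procyclic. Hence $\Dv(\mathcal{N})=\Gv(\mathcal{N})\smallsetminus\{1\}$, and the lemma gives diameter at most $2$. For equality I exhibit two non-adjacent non-trivial elements, for instance $x$ and $x^p$: they generate the procyclic group $\langle x\rangle$, which is not open. So the diameter is exactly $2$.

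The main obstacle was the exceptional residues, but those are already settled by Lemma~\ref{lem:Nott10-1}. The one thing I would double-check is that Remark~\ref{rem:Closed-Open_Nott} applies directly to the pair $(y,x)$ itself, which it does.
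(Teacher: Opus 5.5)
Your proof is correct and follows essentially the paper's argument: residues $n\not\equiv_p 0,\pm1$ are handled by pairing $x$ with any depth-one element via \cref{prop:qr} (with $\mu\ge 1-\frac1p$), the residues $0,\pm1$ by \cref{lem:Nott10-1}, and the graph statement by \cref{lem:big_open}. One small fix in your lower bound on the diameter: $\mathcal{N}$ has elements of order $p$ (e.g.\ $t/(1-t)$), so take $x$ of infinite order or simply use the pair $x, x^{2}$, which are both non-trivial and generate the non-open procyclic group $\langle x\rangle$.
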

\begin{proof}
 Let $x\in \mathcal{N}$ with $D(x)=n$. If $n\not\equiv_p 1,0,-1$, then $x$ together with any $y\in \mathcal{N}$ with $D(y)=1$ generate an open subgroup in $\mathcal{N}$ by \cref{prop:qr}. So $\mu(\mathbb{O}_x)\ge 1-\frac{1}{p}$ in this case.
In the other cases, the lower bound for the measure of the openiser follows from \cref{lem:Nott10-1}. We conclude using \cref{lem:big_open}.
\end{proof}

%%%%%%%%%%%%%%%%%%%%%%%%%%%%%%%%%%%%%%%%%%%%%%%%%%%%%%%%%%

\section{The group \texorpdfstring{$C_p\wr \mathbb{Z}_p$}{CpwrZp}}\label{sec:WreathProcuts}

Consider the profinite wreath product $W=C_p\wr \Zp$.  We will use the isomorphism $W \cong \mathbb{F}_p[\![X]\!] \rtimes \langle t\rangle$ with $\mathbb{Z}_p =\langle t\rangle$ acting by multiplication by $1+X$. 
We denote by $B=\mathbb{F}_p[\![X]\!]$ the base group of the wreath product.
 \comm{
With the assumption of the previous paragraph, if we define $c=\dim_{\mathbb{F}_p}{(\mathbb{F}_p[\![X]\!]/M)}$. Then for each $c$ there are exactly $p^c$ $2$-generated subgroups. Let's see why. Fix a subgroup $S$ with fixed $c$. Since $S\cap \mathbb{F}_p[\![X]\!] = (X)^{c}$ and $S$ projects surjectively onto $\mathbb{Z}_p$, we can consider the quotient
\[
  S/(X)^c \cong \frac{\mathbb{F}_p[\![X]\!]}{(X)^c} \rtimes \mathbb{Z}_p. 
\]
Writing $\mathbb{Z}_p= \langle t \rangle$, from the reduction at the beginning we have that $S=\langle u,v \rangle$ with $u= tg$, $g\in \mathbb{F}_p[\![X]\!]$ and $SB/B = \langle \overline{t} \overline{g} \rangle$ with $\overline{g}\in \mathbb{F}_p[\![X]\!]/(X)^c$. Let us now show that different elements $\overline{g}_1\neq \overline{g}_2\in \mathbb{F}_p[\![X]\!]/(X)^c$ give different subgroups in $W$. Once this is done, it is clear that there are $p^c$ choices for $\overline{g}$.

If $\overline{t} \overline{g}_1$ and $\overline{t} \overline{g}_2$ generated the same subgroup, then they must be a power of each other: there is $\lambda\in \mathbb{Z}_p$ such that 
\[
   \overline{t} \overline{g}_1 = (\overline{t} \overline{g}_2)^\lambda.
\]
Projecting on the top group, we immediately see that $\lambda=1$; hence, $\overline{g}_1= \overline{g}_2$. 

We can compute the Frattini of $\Phi(S)$: if $S=(X)^c \rtimes \mathbb{Z}_p$ with $c$ fixed, then its Frattini is isomorphic to $(X)^{c+1} \rtimes p\mathbb{Z}_p$ (need to write the computations). 

With this, a couple $g=(u_g,t_g),h=(u_h,t_h)$ (these are the coordinates modulo the Frattini!) generates an open subgroup in $W$ if and only if 
\[
   \det \begin{pmatrix}
    u_g   & t_g \\ u_h & t_h
   \end{pmatrix} \not \equiv 0 \quad (\mathrm{mod}\ \ p)
\]
}
We start with a lemma.

\begin{lemma}
    For any $\lambda\in \Zp$, we have that $(f,t)^\lambda = \left( f\cdot \frac{(1+X)^\lambda -1}{X}, t^\lambda\right)$.
    \begin{proof}
First recall that the multiplication in $W$ works as follows:
\[
  (u, t^i) \cdot (v, t^j) = (u + t^i v, t^{i+j}).
\]
    We first prove the claim for $\lambda=n\in \mathbb{N}$ by induction; then, the claim follows by a standard convergence argument. 

    For $n=1$, $(f,t)^1 = (f,t) = \left( f\cdot \frac{1+X -1}{X}, t\right)$. For $n\ge 2$ 
    \begin{align*}
      (f,t)^{n+1} & = (f,t)^n \cdot (f,t) = \left(f\cdot \frac{(1+X)^n -1}{X}, t^n\right) \cdot (f, t) \\ &= \left(f\cdot \frac{(1+X)^n -1 + X(1+X)^n}{X}, t^{n+1}\right) = \left(f\cdot \frac{(1+X)^{n+1} -1}{X}, t^{n+1}\right).   
    \end{align*}

Now, take a sequence $n_i\to \lambda$ and we have that $(f,t)^{n_i} \to (f,t)^\lambda$.
    %    We work in the finite quotients $W_k = \mathbb{F}_p[X]/(X)^{p^k} \rtimes \langle 1+X \rangle$ and $\Zp/p^k\Zp \cong \mathbb{Z}/p^k \mathbb{Z}$.
     %   It is clear that $(1+X)^{p^k}=1$ in $B_k$. 
    \end{proof}
\end{lemma}

\begin{proposition}\label{prop:CpwrZp}
For the pro-$p$ group $W=C_p\wr \mathbb{Z}_p$, we have that $\Dv(W)=\Gv(W)\smallsetminus\{1\}$ and $\mathrm{diam}(\Dv(W)) =2$.
\end{proposition}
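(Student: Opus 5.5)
We will describe, for each pair of elements of $W=B\rtimes\langle t\rangle$, exactly when they generate an open subgroup, by reducing to a Frattini-type determinant condition. First we note that $W$ is $2$-generated with $\Phi(W)=XB\rtimes\langle t^p\rangle$, so $W/\Phi(W)\cong\mathbb{F}_p^2$, with coordinates given by the constant term $u(0)\in\mathbb{F}_p$ of the base part and the exponent $\lambda \bmod p$ of $t^\lambda$. Every element can be written as $(f,t^\lambda)$ with $f\in B$ and $\lambda\in\Zp$.

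The identity is isolated, since $\langle 1,y\rangle$ is procyclic and $W$ is not virtually procyclic ($B$ is infinite of exponent $p$). Conversely, every $g\neq 1$ has a neighbour.
\begin{itemize}
\item If $g=(f,1)$ with $0\neq f\in B$, then $\langle g,t\rangle$ contains the $\mathbb{F}_p[\![X]\!]$-submodule generated by $f$. Indeed, conjugation by $t$ is multiplication by $1+X$, and the closed $\mathbb{F}_p[\![1+X]\!]=\mathbb{F}_p[\![X]\!]$-span of $f$ is $X^{v(f)}B$. This is open in $B$, so $\langle g,t\rangle\le_o W$.
\item If $g=(f,t^\lambda)$ with $\lambda\neq0$, say $\lambda=p^a\mu$ with $\mu$ a unit, then $g^{\mu^{-1}}=(f',t^{p^a})$ by the lemma. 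Pairing $g$ with $(1,1)$, the element $X^0=1\in B$, gives a subgroup containing $B$-part $\mathbb{F}_p[\![(1+X)^{p^a}]\!]\cdot 1=\mathbb{F}_p[\![X^{p^a}]\!]$. This is not all of $B$. To remedy this, we instead pair $g$ with $(h,1)$ where $h=1+X+\dots+X^{p^a-1}$. The module $B$ is free over $\mathbb{F}_p[\![X^{p^a}]\!]$ on the basis $1,X,\dots,X^{p^a-1}$, so we need an $h$ whose module span has finite index. Any nonzero $h$ works up to finite index, since $\mathbb{F}_p[\![X^{p^a}]\!]h$ is not of finite index for rank reasons when $a>0$.
\end{itemize}
So the cleaner route is to take the neighbour to be $y=(1,t)$ or $y=(h,t)$ and use the general criterion below.

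\textbf{General criterion.} Let $H=\langle g,h\rangle$ and consider $H\cap B$. If both components in $\langle t\rangle$ are trivial, $H\le B$ is abelian of exponent $p$, hence not open. Otherwise $HB/B=\langle t^{p^a}\rangle$ has finite index. The subgroup $H$ is open if and only if $H\cap B$ is open in $B$. For two elements $g=(f_1,t^{\lambda_1})$ and $h=(f_2,t^{\lambda_2})$, with $\lambda_1$ of minimal valuation say, we replace $h$ by $h g^{-\lambda_2/\lambda_1}$. By the lemma this is a base element $w=f_2-f_1\frac{(1+X)^{\lambda_2}-1}{(1+X)^{\lambda_1}-1}$, up to the unit factor coming from $t^{\lambda_2}$. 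Then $H\cap B$ contains the $\mathbb{F}_p[\![X^{p^a}]\!]$-span of $w$ together with the commutators of $g$ with it. More precisely, $H\cap B$ is the closed $\mathbb{F}_p[\![(1+X)^{\lambda_1}]\!]$-submodule generated by $w$. Since $\mathbb{F}_p[\![(1+X)^{\lambda_1}]\!]=\mathbb{F}_p[\![X^{p^a}]\!]$ has $B$ free of rank $p^a$ over it, this has finite index if and only if $a=0$ and $w\neq0$. Hence:
\begin{quote}
$\langle g,h\rangle\le_o W$ if and only if one of $\lambda_1,\lambda_2$ is a unit in $\Zp$ (say $\lambda_1$) and $w\neq 0$, that is, $h\notin\langle g\rangle$.
\end{quote}

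The main obstacle is exactly this module computation: showing that $H\cap B$ is precisely the module generated by $w$ over the ring generated by $(1+X)^{\lambda_1}$. This needs care because $H$ is also generated by $g$ and $w$, and we need the normal closure of $w$ in $\langle g,w\rangle$ intersected with $B$. We expect this to follow by writing $\langle g,w\rangle=\overline{\langle w\rangle^{\langle g\rangle}}\rtimes\langle g\rangle$.

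\textbf{Conclusion from the criterion.}
\begin{itemize}
\item Every $g\neq1$ has a neighbour. If $g=(f,t^\lambda)$ with $\lambda$ a unit, take $h=(0,1)$ perturbed to be outside $\langle g\rangle$. Otherwise take $h=(0,t)$: then $w$ is essentially $f\ne0$ when $\lambda=0$, and some nonzero element otherwise, since $g\notin\langle t\rangle$ or, if $g\in\langle t\rangle$, we use $h=(1,t)$ instead.
\item Diameter at most $2$: given $g_1,g_2\neq1$, choose $z=(c,t)$ with $c\in B$ avoiding the two closed sets of measure zero where $z\in\langle g_i\rangle$ or $g_i\in\langle z\rangle$. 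This is possible since the procyclic groups $\langle g_i\rangle$ meet each coset $Bt$ in at most a point.
\item Diameter exactly $2$: two distinct nonzero base elements are non-adjacent.
\end{itemize}
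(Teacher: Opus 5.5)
Your final argument is essentially the paper's proof: a common neighbour $z=(c,t)$ of $g_1,g_2$ is adjacent to $g_i$ unless $g_i\in\langle z\rangle$, so $c$ only has to avoid finitely many values. The paper gets this dichotomy more cheaply. Since $z$ acts on $B$ as multiplication by $1+X$, the group $\langle g_i,z\rangle\cap B$ is a closed ideal of $\mathbb{F}_p[\![X]\!]$, hence either $0$ or open. This makes your general rank-$p^a$ criterion unnecessary, and also your ``main obstacle'', since its non-openness direction is never used for this statement.

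Two small repairs are needed. First, the bad set $\{c : g_i\in\langle (c,t)\rangle\}$ is not controlled by ``$\langle g_i\rangle$ meets $Bt$ in at most a point'', which only handles the redundant condition $z\in\langle g_i\rangle$. It is controlled by the lemma $(c,t)^{\lambda_i}=\bigl(c\cdot\frac{(1+X)^{\lambda_i}-1}{X},\,t^{\lambda_i}\bigr)$: here $\lambda_i\neq 0$ is forced because $g_i\neq 1$, and the multiplier is nonzero in a domain, so at most one $c$ is bad. Second, delete the abandoned bullet about pairing $g$ with a base element. Its claim that any nonzero $h$ works is false when $v_p(\lambda)\ge 1$, since then no base element is adjacent to $g$.
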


\newcommand{\psFp}{\mathbb{F}_p[\![X]\!]}
\comm{
\begin{proof}
Given a $2$-generated subgroup $S=\langle u,v\rangle$ in $W$, it is clear that $SB/B \cong p^m \mathbb{Z}_p$ for some $m$ and $M=S\cap B$ is a cyclic $\mathbb{F}_p[\![X]\!]$-module. Since $\mathbb{F}_p[\![X]\!]$ is a DVR, all its ideals are of the form $(X)^d$, with $d\in \mathbb{N}\cup \{\infty\}$.

If we want $S$ to be an open subgroup, we note that the quotient $SB/B$ must be the whole of $\mathbb{Z}_p$; otherwise $p^m \mathbb{Z}_p$ would act like $1+X^{p^m}$ on $M%=\mathbb{F}_p[\![X^{p^m}]\!]
$. In this case, it is clear that the index of $M$ in $B$ would be infinite. Therefore, without loss of generality, we can suppose that $\langle uB\rangle = SB/B=\mathbb Z_p$ and $v\in M$.

We will prove that any non-identity element in $W$ is connected to either $t$, $X$, or $Xt$.

Any element $u\in B=\psFp$ is connected to $t$: $\langle u,t \rangle$ is exactly $(X)^c \rtimes \Zp$ with $c=v_p(u)$ (the valuation of an element of $B$ is the smallest non-zero degree in the power series).

Any element $g\in W$ such that $\langle gB/B \rangle = \Zp$ is connected to $X\in B$: the subgroup $\langle g,X \rangle$ is the whole of $W$.

Finally, we are left with the case when $g\in W$ is such that $gB\neq B$ and it does not generate the quotient $\Zp$. We distinguish two cases: when $g = (Xt)^\lambda$ (for some $0\neq \lambda\in \Zp$) and otherwise.

For the first sub-case, we will show that any element of the form $(Xt)^\lambda$ again connects to $t$. In fact, $(Xt)^\lambda = [(1+X)^\lambda -1] t^\lambda$, so this element is connected to $t$: the subgroup generated by $(Xt)^\lambda$ and $t$ contains the ideal $(X^c)$ with $c=v_p((1+X)^\lambda -1)$. Let us show that this ideal is not trivial. Write $\lambda=\mu p^k$ with $\mu\in \Zp^\times$. Then,
\[
   (1+X)^\lambda -1 = (1+X^{p^k})^\mu -1= \mu X^{p^k} + (\text{higher order terms}) \neq 0.
\]

For the second sub-case, write $g=u t^{\lambda}$ with $u\in B$ and $\lambda = \mu p^k$ with $\mu\in \Zp^\times$ and suppose that $g\neq (Xt)^\lambda$, i.e.\ $u\neq (1+X)^\lambda -1$ for all $\lambda\in \Zp$. We want to show that this element is connected to $X t$. Taking the $\lambda$-power of $Xt$ we get
%\[
%   (Xt)^{-\lambda} = [(1+X)^{-\lambda} -1] t^{-\lambda}
%\] 
%$g \cdot (Xt)^{-\lambda} = u + t^\lambda((1+X)^{-\lambda} -1)  = u-((1+X)^\lambda-1) \neq 0$.
$(Xt)^{\lambda}g^{-1} = %((1+X)^{\lambda} -1) t^\lambda t^{-\lambda}[-u]  = 
((1+X)^\lambda-1)-u \neq 0$.

Hence, any non-identity element is connected to one of the three. On the other hand, these three representative are connected in a triangle. So the diameter is at most $3$.

To prove that the diameter is 2, we look at the openizers of our representatives. The openizers are $$\mathbb{O}_X = \{f(X) t^\lambda \mid \lambda \in \mathbb{Z}_p^\times\}$$
$$ \mathbb{O}_t = \{f(X) t^\lambda \mid f\neq 0\} $$
$$ \mathbb{O}_X \cap \mathbb{O}_t$$
Pick $g,h\in G$. Then we can show that there exist $k\in \mathbb{O}_X \cap \mathbb{O}_t$ connected to both $g$ and $h$.

The elements in $\mathbb{O}_X \cap \mathbb{O}_t$ that are not connected to $g=(f,t^\lambda)$ are the elements $a$ such that $g=a^\lambda$. Same for $h$. So there are many elements that are connected to both.

The measure of the openizers is at least $\frac{p-1}{p}$
\end{proof}
}
\begin{proof}
It can be easily seen that $$\mathbb{O}_X = \{(f, t^\lambda)\in W \mid \lambda \in \mathbb{Z}_p^\times\} \text{\quad and \quad}
 \mathbb{O}_t = \{(f, t^\lambda)\in W \mid f\neq 0\}.$$

For a non-identity element $g=(f,t^\lambda)\in W$ we will show that there are many elements in $N=\mathbb{O}_X \cap \mathbb{O}_t$ connected to $g$. Consider the element $h=(f',t)\in N$ and suppose that it is not connected to $g$, then we must have that $\langle g,h\rangle \cap B =\{0\}$. A direct computation shows that this can only happen if $g\in \langle h\rangle$, that is
\[
   f = f' \cdot \frac{(1+X)^\lambda -1}{X}.
\]
If we now consider two non-identity elements $g_1,g_2 \in W$, from the above condition it is clear that there exist $h\in N$ which is connected to both $g_1$ and $g_2$. Hence the proper virtually generating graph of $W$ is connected and of diameter $2$.
\end{proof}

\begin{remark}
 We can also estimate the size of openizers in $W$. In the previous proof we showed that for any $g=(f,t^\lambda)\in W$ the set 
 \[
    \mathbb{O}_X \cap \mathbb{O}_t \cap \left \{(f',t^\mu) \in W\ \big \vert\ f\neq f' \frac{(1+X)^\lambda -1}{X} \right\}
 \]
 is contained in the openizer of $g$. It is easy to see that $\mu(\mathbb{O}_X)=1-\frac{1}{p}$, $\mu(\mathbb{O}_t)=1$ and the above set has measure $1-\frac{1}{p}$.
\end{remark}

%%%%%%%%%%%%%%%%%%%%%%%%%%%%%%%%%%%%%%%%%%%%%%%%%%%%%%

\section{Small openizers}\label{sec:small}

Let $G$ be a pro-$p$ group. As we saw in \cref{lem:big_open}, if $g\in G$ and $\mathbb O_g\neq \emptyset,$
then $\mu(\mathbb O_g)>0.$ %Indeed, if $\mathbb O_g\neq \emptyset,$ then there exists $x\in G$ such that $\langle g,x\rangle$ is an open subgroup of $G$. Since the probability that $h\in H$ is such that $\langle g, h\rangle=H$ is $1-1/p,$ it follows that $$\mu(\mathbb O_g)\geq \left(1-\frac{1}{p}\right)\frac{1}{|G:H|}.$$ 
However, we are going to prove that there is no possible lower bound for the value of $\mu(\mathbb O_g).$

\comm{

	\begin{lemma}
		Let $W_n$ be a Sylow 2-subgroup of $S_{2^n}.$ Then the probability 
		that an element of $W_n$ has order $2^n$ is $\frac{1}{2^n}.$
	\end{lemma}
	\begin{proof}
		We prove by induction on $n$ that the number $\gamma_n$ of elements 
		of $W_n$ of order $2^n$ is $2^{2^n-n-1}.$
		
		The base case $n=1$ is clear: $W_1=C_2$ has exactly one element of 
		order $2$, and $2^{2^1-1-1}=2^0=1.$ 
		
		For the inductive step, since $W_n=C_2\wr W_{n-1},$ an element of 
		order $2^n$ in $W_n$ is of the form
		$x=(y_1,\dots,y_{2^{n-1}})w$ with $w\in W_{n-1}$ and 
		$(y_1,\dots,y_{2^{n-1}})\in C_2^{2^{n-1}}.$
		One verifies that $x$ has order $2^n$ if and only if $w$ has order 
		$2^{n-1}$ and $y_1\cdots y_{2^{n-1}}\neq 1.$
		By induction the number of choices for $w$ is 
		$\gamma_{n-1}=2^{2^{n-1}-n},$ while the number of tuples 
		$(y_1,\dots,y_{2^{n-1}})\in C_2^{2^{n-1}}$ with 
		$y_1\cdots y_{2^{n-1}}\neq 1$ is $2^{2^{n-1}-1}.$ Thus
		$$
		\gamma_n=\gamma_{n-1}\cdot 2^{2^{n-1}-1}
		=2^{2^{n-1}-n}\cdot 2^{2^{n-1}-1}=2^{2^n-n-1}.
		$$
		Since $|W_n|=2^{2^n-1},$ the probability that an element of $W_n$ 
		has order $2^n$ is
		$$
		\frac{\gamma_n}{|W_n|}=\frac{2^{2^n-n-1}}{2^{2^n-1}}=\frac{1}{2^n}. \qedhere
		$$
	\end{proof}
	
\begin{proposition}
	For every $n\in \mathbb{N}$ there exists a group $G$ containing 
	an element $x$ such that $\mathbb{O}_x\neq \emptyset$ but 
	$\mu(\mathbb{O}_x)\leq \frac{1}{2^n}.$
\end{proposition}

\begin{proof}	
Let $H=\langle z, a\rangle$ be the pro-2 completion of the 
infinite dihedral group (with the notations introduced in Section 10) 
and consider the wreath product $G=H\wr W_n\cong H^m \rtimes W_n,$ 
where $m=2^n.$ Assume $m\geq 4.$ 
Let $x=(a,1,\dots,1)\in H^m\leq G$. First, we claim that 
$\mathbb{O}_x\neq \emptyset.$ Indeed, let $y=(z,1,\dots,1)w$, 
where $w$ is an $m$-cycle in $W_n$, and let $K=\langle x, y\rangle.$
Then $K$ contains $y^m=(z,\dots,z)$
and, consequently, it contains 
$$u=[x,y^m]=([a,z],1,\dots,1)=(z^{-2},1,\dots,1).$$
It follows that $K$ contains 
$\langle u, u^y, \dots, u^{y^{m-1}}\rangle \cong (2\mathbb{Z}_2)^m.$
Hence $K$ is open.

Now assume that $\tilde{y}=(h_1,\dots,h_m)\sigma \in \mathbb{O}_x.$
Assume by contradiction that $\sigma$ is not an $m$-cycle.
Write $\sigma=\tau_1\tau_2$ where $\tau_1$ is the cycle of $\sigma$ 
involving $1.$ Since $\tau_1$ is a 2-element and is not an $m$-cycle, 
$|\tau_1|\leq m/2$. Let $u=|\tau_1|$; without loss of generality 
assume $\tau_1=(1\ 2\ \cdots\ u)$ and $\tau_2$ acts on 
$\{u+1,\dots,m\}.$
Let $\tilde{K}=\langle x, \tilde{y}\rangle.$ Since $x=(a,1,\dots,1)$ 
has support contained in $\{1,\dots,u\}$ and $\tau_1, \tau_2$ have 
disjoint supports, we have
$$
\tilde{K} \leq K_1 \times K_2,
$$
where
$
K_1=\langle x,\, (h_1,\dots,h_u,1,\dots,1)\tau_1\rangle$ and 
$K_2=\langle (1,\dots,1,h_{u+1},\dots,h_m)\tau_2\rangle.
$
Therefore
$$
\tilde{K}\cap \mathbb{Z}_2^m \leq 
(K_1\cap \mathbb{Z}_2^u)\times (K_2\cap \mathbb{Z}_2^{m-u}).
$$
Since $K_2$ is pro-cyclic, $K_2\cap \mathbb{Z}_2^{m-u}$ has rank 
at most $1$, so $\tilde{K}\cap \mathbb{Z}_2^m$ has rank at most 
$u+1 \leq m/2+1 \leq m-1$ (for $n\geq 2$, i.e.\ $m\geq 4$). 
Hence $\tilde{K}\cap \mathbb{Z}_2^m$ is not open in $\mathbb{Z}_2^m,$ 
and therefore $\tilde{K}$ is not open in $G,$ contradicting 
$\tilde{y}\in\mathbb{O}_x.$

 By the previous 
argument, if $\tilde{y}\in \mathbb{O}_x$ then $\sigma$ must be an 
$m$-cycle in $W_n.$ Therefore
$$
\mathbb{O}_x \subseteq \{(h_1,\dots,h_m)\sigma \in G : 
\sigma \text{ is an } m\text{-cycle in } W_n\}.
$$
Since the Haar measure on $G = H^m \rtimes W_n$ is the product of 
the Haar measure on $H^m$ and the uniform measure on $W_n,$ we have
$$
\mu(\mathbb{O}_x) \leq \mu(\{\tilde{y}\in G : 
\sigma \text{ is an } m\text{-cycle}\}) = 
\frac{|\{m\text{-cycles in } W_n\}|}{|W_n|} = \frac{1}{2^n},
$$
where the last equality follows from the Lemma.
\end{proof}

}

	\begin{lemma}\label{lem:p_elts_symmetric}
		Let $W_n$ be a Sylow $p$-subgroup of $S_{p^n}.$ Then the probability 
		that an element of $W_n$ has order $p^n$ is at most $(1-\frac{1}{p})^n$.
	\end{lemma}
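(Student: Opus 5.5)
We will argue by induction on $n$, using the recursive structure $W_n = C_p \wr W_{n-1}$. Concretely, $W_n \cong C_p^{p^{n-1}} \rtimes W_{n-1}$, where $W_{n-1}$ acts on the $p^{n-1}$ blocks. Elements are written $x=(y_1,\dots,y_{p^{n-1}})w$ with $y_i\in C_p$ and $w\in W_{n-1}$; the Haar (uniform) measure on $W_n$ is the product of the uniform measures on $C_p^{p^{n-1}}$ and on $W_{n-1}$. The base case $n=1$ is $W_1=C_p$, where the proportion of elements of order $p$ is $1-\frac1p$, as required. (Alternatively, start at $n=0$ with the trivial group, where the bound $1$ is trivial.)

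For the inductive step, the key observation concerns the image of $x$ under the projection $W_n\to W_{n-1}$. If $x$ has order $p^n$, then its image $w$ must have order $p^{n-1}$. Indeed, the order of $x$ is at most $p\cdot|w|$, because $x^{|w|}$ lies in the elementary abelian base group $C_p^{p^{n-1}}$ and hence has order dividing $p$. So $|x|=p^n$ forces $|w|=p^{n-1}$, that is, $w$ is a $p^{n-1}$-cycle on the blocks and acts transitively.

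Given such a $w$, we compute $x^{p^{n-1}}$. Since $w$ is transitive, each coordinate of $x^{p^{n-1}}$ equals the product of all the $y_i$ (the base group is abelian). Thus $x$ has order $p^n$ if and only if $y_1+\dots+y_{p^{n-1}}\neq 0$ in $C_p$. For a uniformly random tuple this happens with probability exactly $1-\frac1p$. Combining the two facts,
\[
\mathbb{P}(|x|=p^n)=\Bigl(1-\frac1p\Bigr)\,\mathbb{P}\bigl(|w|=p^{n-1}\bigr)\le \Bigl(1-\frac1p\Bigr)\Bigl(1-\frac1p\Bigr)^{n-1},
\]
which completes the induction. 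In fact equality holds throughout.

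The only point requiring care is the computation of $x^{p^{n-1}}$ in the wreath product, namely checking that transitivity of $w$ makes every coordinate equal to the full sum $\sum y_i$. This follows by writing $x^{k}=(y\cdot y^{w}\cdots y^{w^{k-1}})w^k$ and noting that, for $k=p^{n-1}$, the orbit of each block under $\langle w\rangle$ is the whole set of blocks.
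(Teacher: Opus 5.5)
Your proof is correct and follows essentially the same argument as the paper: induction through $W_n=C_p\wr W_{n-1}$, using that $x=(y_1,\dots,y_{p^{n-1}})w$ has order $p^n$ exactly when $w$ has order $p^{n-1}$ and $\sum y_i\neq 0$. The only difference is that you work with probabilities instead of counting elements of order $p^n$ and dividing by $|W_n|$. This gives the exact value $(1-\frac1p)^n$ directly and avoids the slip in the paper's last display, where $|W_n|$ is written as $p^{p^n-1}$ rather than $p^{\frac{p^n-1}{p-1}}$ (with the correct order, the paper's count also gives equality).
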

	\begin{proof}
		We prove by induction on $n$ that the number $\gamma_n$ of elements 
		of $W_n$ of order $p^n$ is $(p-1)^np^{\frac{p^n-1}{p-1}-n}$. Note that $\lvert W_n\rvert=p^{\frac{p^n-1}{p-1}}$
		
		The base case $n=1$ is clear: $W_1=C_p$ has exactly $p-1$ elements of 
		order $p$, and the formula holds. 
		
		For the inductive step, since $W_n=C_p\wr W_{n-1},$ an element of 
		order $p^n$ in $W_n$ is of the form
		$x=(y_1,\dots,y_{p^{n-1}})w$ with $w\in W_{n-1}$ and 
		$(y_1,\dots,y_{p^{n-1}})\in C_p^{p^{n-1}}.$
		One verifies that $x$ has order $p^n$ if and only if $w$ has order 
		$p^{n-1}$ and $y_1\cdots y_{p^{n-1}}\neq 1.$
		By induction the number of choices for $w$ is 
		$\gamma_{n-1}$, while the number of tuples 
		$(y_1,\dots,y_{p^{n-1}})\in C_p^{p^{n-1}}$ with 
		$y_1\cdots y_{p^{n-1}}\neq 1$ is $(p-1)p^{{p^{n-1}}-1}$. %\footnote{there are $p^{p^{n-1}}$ vectors, exactly $p^{p^{n-1}}/p$ of the products $y_1\cdots y_{p^{n-1}}$ achieve a fixed $a\in C_p$, the tuples that have non-identity product are all but the ones that product to the identity.} 
        Thus
		$$
		\gamma_n=\gamma_{n-1}\cdot (p-1)p^{{p^{n-1}}-1}
		=(p-1)^{n-1}p^{\frac{p^{n-1}-1}{p-1}-(n-1)}\cdot (p-1)p^{p^{n-1}-1}=(p-1)^np^{\frac{p^n-1}{p-1}-n}.
		$$
		Since $|W_n|=p^{p^n-1},$ the probability that an element of $W_n$ 
		has order $p^n$ is
		$$
		\frac{\gamma_n}{|W_n|}=\frac{(p-1)^np^{\frac{p^n-1}{p-1}-n}}{p^{p^n-1}}= \frac{(p-1)^np^{\frac{p^n-1}{p-1}-n}}{p^{p^n-n+n-1}} =  (1-\frac{1}{p})^n p^{ \frac{p^n-1}{p-1}\frac{2-p}{p-1}} \le \left(1-\frac{1}{p}\right)^n. \qedhere
        $$
	\end{proof}

\comm{

Define the group 
\[
   H= \langle x,y \mid y^{-1}xy=x^{1-p},\ y^p=x \rangle, 
\]
this is a virtually procyclic pro-$p$ group with procyclic subgroup $C=\langle x \rangle \cong \mathbb{Z}_p$ and $[x,y]=x^{-p}$ generates an open subgroup of $H$.

}

\begin{proposition}\label{prop:small}
	For every $n\in \mathbb{N}$ there exists a group $G$ containing 
	an element $x$ such that $\mathbb{O}_x\neq \emptyset$ but 
	$\mu(\mathbb{O}_x)\leq (1-1/p)^n.$
\end{proposition}

\begin{proof}	
Let $\mathbb{Z}_p=\langle x \rangle$ 
and consider the wreath product $G=\mathbb{Z}_p\wr W_n\cong \mathbb{Z}_p^m \rtimes W_n$, 
where $m=p^n$. Assume $m>p$. 
Let $a=(1,0,\dots,0)\in \mathbb{Z}_p^m\leq G$. First, we claim that 
$\mathbb{O}_a\neq \emptyset.$ Indeed, consider an $m$-cycle $w$ in $W_n$, and let $K=\langle a, w\rangle.$
Then $K$ contains $a^{w^i}=(0,\dots,\underset{i}{1},\ldots,0)$
and, consequently, it contains $\mathbb{Z}_p^m$.
%$$u=[a,b^m]=([y,x],1,\dots,1)=(x^{p},1,\dots,1).$$
%It follows that $K$ contains 
%$\langle u, u^y, \dots, u^{y^{m-1}}\rangle \cong (p\mathbb{Z}_p)^m.$
Hence $K$ is open.

Now assume that $\tilde{b}=(h_1,\dots,h_m)\sigma \in \mathbb{O}_a.$
Assume by contradiction that $\sigma$ is not an $m$-cycle.
Write $\sigma=\tau_1\tau_2$ where $\tau_1$ is the cycle of $\sigma$ 
involving $1.$ Since $\tau_1$ is a $p$-element and is not an $m$-cycle, 
the length $u$ of $\tau_1$ is at most $m-p$. %Let $u$ be the length of $\tau_1$; w
Without loss of generality 
assume $\tau_1=(1\ 2\ \cdots\ u)$ and $\tau_2$ acts on 
$\{u+1,\dots,m\}.$
Let $\tilde{K}=\langle a, \tilde{b}\rangle.$ Since $a=(1,0,\dots,0)$ 
has support contained in $\{1,\dots,u\}$ and $\tau_1, \tau_2$ have 
disjoint supports, we have $\tilde{K} \leq K_1 \times K_2,$ where $K_1=\langle x,\, (h_1,\dots,h_u,0,\dots,0)\tau_1\rangle$ and 
$K_2=\langle (0,\dots,0,h_{u+1},\dots,h_m)\tau_2\rangle.$
Therefore, %recalling that $C=\langle x\rangle \le H$
$$
\tilde{K}\cap \mathbb{Z}_p^m \leq 
(K_1\cap \mathbb{Z}_p^u)\times (K_2\cap \mathbb{Z}_p^{m-u}).
$$
Since $K_2$ is pro-cyclic, $K_2\cap \mathbb{Z}_p^{m-u}$ has rank 
at most $1$, so $\tilde{K}\cap \mathbb{Z}_p^m$ has rank at most 
$u+1 \leq m-p+1 \leq m-1$ (for $m> p$). 
Hence $\tilde{K}\cap \mathbb{Z}_p^m$ is not open in $\mathbb{Z}_p^m,$ 
and therefore $\tilde{K}$ is not open in $G$, contradicting 
$\tilde{y}\in\mathbb{O}_a.$

 By the previous 
argument, if $\tilde{y}\in \mathbb{O}_a$ then $\sigma$ must be an 
$m$-cycle in $W_n.$ Therefore
$$
\mathbb{O}_a \subseteq \{(h_1,\dots,h_m)\sigma \in G : 
\sigma \text{ is an } m\text{-cycle in } W_n\}.
$$
Since the Haar measure on $G = \mathbb{Z}_p^m \rtimes W_n$ is the product of 
the Haar measure on $\mathbb{Z}_p^m$ and the uniform measure on $W_n,$ we have
$$
\mu(\mathbb{O}_a) \leq \mu(\{\tilde{y}\in G : 
\sigma \text{ is an } m\text{-cycle}\}) = 
\frac{|\{m\text{-cycles in } W_n\}|}{|W_n|} \le \left(1-\frac{1}{p}\right)^n,
$$
where the last equality follows from~\cref{lem:p_elts_symmetric}.
\end{proof}

Analyzing the properties of the virtually generating graph of
	$\mathbb{Z}_p \wr W_n$ is no straightforward task. We think it may be
	interesting to describe it in detail at least in the particular case
	$p = n = 2$, namely when $G = \mathbb{Z}_2 \wr D_4$
	(where $D_4$ denotes the dihedral group of degree 4). We identify $D_4$
	with the subgroup $\langle (1,2,3,4),(1,3)\rangle$ of $S_4$ and we write
	any element $g \in G$ in the form $g = v_g \sigma_g$ with
	$v_g \in V=\mathbb{Z}_2^4$ and $\sigma_g \in D_4$. For every $x \in D_4,$
	denote by $\Sigma_x$ the set of elements $g = v_g \sigma_g$ with
	$\sigma_g = x$. Since $\mu(\Sigma_x) = 1/8,$ for every $y \in G$ we may
	view the quantity
	$\nu_x(y) = 8 \mu(\mathbb{O}_y)$
	as the conditional probability that $\langle x, y \rangle$ is an open
	subgroup of $G,$ given that $x \in \Sigma_x.$

\begin{proposition}
\label{prop:Z2wrD4}
Let $g \in G=\mathbb Z_2 \wr D_4= V\rtimes D_4$.
\begin{enumerate}
	\item If $g=(a_1,a_2,a_3,a_4)\in V,$ then either $$\det\begin{pmatrix}
		a_1&a_2&a_3&a_4\\a_4&a_1&a_2&a_3\\a_3&a_4&a_1&a_2\\1&1&1&1\end{pmatrix}= 0,$$ in which case $g$ is an isolated vertex of $\Gv(G)$, or $\nu_{(1,2,3,4)^{\pm 1}}(g)=1.$ Moreover, $\mathbb O_g\cap \Sigma_x=\emptyset$ if $x\neq (1234)^{\pm 1}.$
	\item If either  $\sigma_g\notin \langle (13)(24)\rangle$ or $g=(u_1,u_2,u_3,u_4)(13)(24)$ with $u_1-u_3\neq u_2-u_4,$ then $\nu_{(1234)^{\pm 1}}(g)=1.$
	\item If $g=(u_1,u_2,u_3,u_4)(13)(24)$ and $u_1-u_3=u_2-u_4,$ then $\mathbb O_g\cap \Sigma_{(1,2,3,4)^{\pm 1}}=\emptyset$. Moreover,  $g$ is an isolated vertex of $\Gv(G)$ if $u_1=u_3$; $\nu_{(24)}(g)=1$ otherwise.
\end{enumerate}
It follows that $\Dv(G)$ is connected, and its diameter is equal to $3$.
\end{proposition}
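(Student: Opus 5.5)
Throughout, write $c=(1,2,3,4)$ and $r=(13)(24)=c^2$. The guiding principle is that a closed subgroup $K\le G$ is open if and only if $K\cap V$ has rank $4$. First I determine which $x\in D_4$ can appear as the top component of a partner of $g$: the image of $\langle g,y\rangle$ in $D_4$ must act on $V\otimes\mathbb{Q}_2$ so that the lattice generated contains a rank-$4$ sublattice. Then, for fixed top components, I compute $\langle g,y\rangle\cap V$ explicitly. This intersection is the $\mathbb{Z}_2[H]$-module ($H=\langle\sigma_g,\sigma_y\rangle$) generated by the $V$-parts of suitable words, such as powers $y^{|\sigma_y|}$, $g^{|\sigma_g|}$ and commutators. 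Openness then becomes the nonvanishing of a determinant that is polynomial in the coordinates of $y$. By \cite[2.5.3]{Mar91}, whenever this polynomial is not identically zero on $\Sigma_x$, we get $\nu_x(g)=1$, exactly as in \cref{ex:Q8}.

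For part (1), let $g=a\in V$. A pair $\langle a,v\sigma\rangle$ meets $V$ in the $\mathbb{Z}_2[\langle\sigma\rangle]$-module generated by $a$ and $w=v+\sigma v+\dots$ (the norm of $v$). If $\sigma$ is not a $4$-cycle, the orbit of $a$ under $\langle\sigma\rangle$ together with one more vector has rank at most $2+1<4$. This gives $\mathbb{O}_g\cap\Sigma_x=\emptyset$ for $x\neq c^{\pm1}$. For $\sigma=c^{\pm1}$, the norm of $v$ is $(\sum v_i)(1,1,1,1)$. So openness is equivalent to $a,ca,c^2a,(1,1,1,1)$ being linearly independent, which is the displayed determinant (up to ordering) times the scalar $\sum v_i$. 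If the determinant vanishes, $g$ is isolated. Otherwise the condition $\sum v_i\neq0$ holds off a null set, giving $\nu=1$.

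For part (2), if $\sigma_g$ is a $4$-cycle, pair $g$ with $y\in\Sigma_{c^{\pm1}}$. If $\sigma_g$ is a reflection or lies in the Klein subgroup but not in $\langle r\rangle$, then $\langle\sigma_g,c\rangle=D_4$. In each case I produce an explicit element of $V$ in $\langle g,y\rangle$ (a norm or commutator) whose $\mathbb{Q}_2[D_4]$-span, together with the norms, is all of $V\otimes\mathbb{Q}_2$. I then check that the corresponding determinant is a nonzero polynomial in the coordinates of $y$. For $\sigma_g=r$, I use $g^2=(u_1+u_3,u_2+u_4,u_1+u_3,u_2+u_4)$ and commutator-type elements such as $g\,y^2g^{-1}y^{-2}$. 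The obstruction is exactly the antisymmetric part: the relevant determinant is a multiple of $(u_1-u_3)-(u_2-u_4)$.

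For part (3), I show that when $u_1-u_3=u_2-u_4$, every element of $\langle g,y\rangle\cap V$ with $y\in\Sigma_{c^{\pm1}}$ lies in a proper invariant sublattice, namely the one cut out by the sign character on which $c$ acts by $-1$. Next, if $u_1=u_3$ and $u_2=u_4$, then $g$ centralises too much: I argue rank $\le3$ for every $y$ by a case check over the eight top components. Otherwise I pair $g$ with $y\in\Sigma_{(24)}$ and verify a generic determinant condition. Finally, for the diameter, the vertices from (1) and (2) share the common neighbourhood $\Sigma_{c^{\pm1}}$ up to null sets, so they lie at distance $\le2$. A vertex $g$ from (3) is adjacent to generic elements of $\Sigma_{(24)}$, and these lie in case (2), hence reach everything within two further steps; this gives diameter $\le3$. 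For the lower bound I exhibit $g$ as in (3) and $a\in V$ as in (1) at distance exactly $3$. The neighbours of $a$ lie in $\Sigma_{c^{\pm1}}$ and the neighbours of $g$ lie outside it, so the two neighbourhoods are disjoint. The main obstacle is the case analysis in (3), namely proving that $g$ with $\sigma_g=r$ has no neighbours in $\Sigma_{c^{\pm1}}$ and characterising isolation. This is where I would first compute the sign-character component of all generated elements.
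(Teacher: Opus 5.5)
Your approach is the same as the paper's. You compute $\langle g,y\rangle\cap V$ coset by coset and reduce openness to the non-vanishing of a polynomial in the coordinates of $y$, via \cite[2.5.3]{Mar91}. You use the invariant sublattice $W=\{v\in V\mid v_1+v_3=v_2+v_4\}$, the kernel of $v\mapsto v_1-v_2+v_3-v_4$, to exclude $\Sigma_{(1234)^{\pm1}}$ in (3), pair with $\Sigma_{(24)}$ otherwise, and count distances the same way. The paper only differs in first passing to $V$-conjugacy representatives, which makes its determinants explicit. However, two of your steps cannot be carried out as written.

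\textbf{The sign in (2)--(3).} For $\sigma_g=(13)(24)$ and $y\in\Sigma_{(1234)}$, the $\epsilon$-component of $gy^2$ is $(u_1+u_3)-(u_2+u_4)$, independently of $y$. This agrees with your own formula for $g^2$; it is not $(u_1-u_3)-(u_2-u_4)$. The reason is that conjugation by $V$ changes $u$ by vectors $(s,t,-s,-t)$, so the invariants are $u_1+u_3$ and $u_2+u_4$. The paper only ever treats the representative $(a,b,0,0)(13)(24)$, where the two readings coincide. Read literally with minus signs, (2) and (3) are false:
\begin{itemize}
\item $(1,0,0,1)(13)(24)$ satisfies the hypothesis of (2) but has no neighbour in $\Sigma_{(1234)^{\pm1}}$.
\item $(1,0,1,0)(13)(24)$ has $u_1=u_3$ and $u_2=u_4$, yet $w=gy^2$ has $z_1-z_2+z_3-z_4=2$ and $f_g$ is a non-zero polynomial. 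So it is adjacent to almost all of $\Sigma_{(1234)}$.
\end{itemize}
Hence your case check ``rank $\le 3$ for every $y$'' must be done for $u_1+u_3=u_2+u_4=0$, i.e.\ for $g$ conjugate to $(13)(24)$, as the paper does.

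\textbf{The reflection case in (2).} This is the more serious gap: your step in (2) for $\sigma_g\in\{(13),(24)\}$ fails. Let $\epsilon$ be the character of $D_4$ with kernel $\{1,(13),(24),(13)(24)\}$, so that $G/W\cong\mathbb{Z}_2\rtimes_\epsilon D_4$. Take $g=u(13)$ with $u_1+u_3=u_2+u_4$, and $y=(x_1,\dots,x_4)\sigma_y\in\Sigma_{(1234)^{\pm1}}$.
\begin{itemize}
\item The images of $g$ and $y$ in $G/W$ are $(0,(13))$ and $(f,\sigma_y)$, where $f=x_1-x_2+x_3-x_4$ and $\epsilon(\sigma_y)=-1$.
\item Both lie in $S_f=\{(0,\sigma)\mid\epsilon(\sigma)=1\}\cup\{(f,\sigma)\mid\epsilon(\sigma)=-1\}$, which is a subgroup of order $8$ meeting $V/W$ trivially.
\item Hence $\langle g,y\rangle\cap V\le W$ for every such $y$, and no auxiliary element can give a non-zero determinant.
\end{itemize}
For example, $g=(13)$ has $\mathbb{O}_g\cap\Sigma_{(1234)^{\pm1}}=\emptyset$; in fact it is isolated. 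The paper's proof has the same hole: in the first row of Table~\ref{tab:gw} one has $z_1-z_2+z_3-z_4=2(b+d-a)$, so $f_g\equiv 0$ when $a=b+d$.

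Some of these elements are not isolated. For instance, $(1,0,1,2)(13)$ is adjacent to almost every element of $\Sigma_{(24)}$, and all its neighbours lie in $\Sigma_{(24)}\cup\Sigma_{(13)(24)}$. So your diameter step ``(1) and (2) share $\Sigma_{(1234)^{\pm1}}$'' also breaks for them. The bound $3$ can be recovered by routing such vertices, like those of case (3), through generic elements of $\Sigma_{(24)}$ (respectively $\Sigma_{(13)}$), which do satisfy $\nu_{(1234)}=1$.
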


\begin{proof} 1) Assume $g=(a_1,a_2,a_3,a_4).$ We have already noticed in the previous discussion that $\mathbb O_g \cap \Sigma_x=\emptyset$ if $x\neq (1234)^{\pm{1}}.$ Now let  $y=(x_1,x_2,x_3,x_4)(1234)\in \Sigma_{(1234)}$. Then
$$\begin{aligned}\langle g, y\rangle&= \langle g, g^y, g^{y^2}, g^{y^3}, y^4\rangle\\&=\langle (a_1,a_2,a_3,a_4), (a_2,a_3,a_4,a_1), (a_3,a_4,a_1,a_2), (a_1,a_2,a_3,a_4), (z,z,z,z)\rangle,
\end{aligned}$$ with
$z=x_1+x_2+x_3+x_4.$ If 
$$\det\begin{pmatrix}
	a_1&a_2&a_3&a_4\\a_4&a_1&a_2&a_3\\a_3&a_4&a_1&a_2\\a_2&a_3&a_4&a_1\end{pmatrix}\neq 0,$$ then
	$\mathbb O_g=\Sigma_{(1,2,3,4)}\cup \Sigma_{(1,4,3,2)}.$
Otherwise $g$ is not isolated if and only if $$\det\begin{pmatrix}
	a_1&a_2&a_3&a_4\\a_4&a_1&a_2&a_3\\a_3&a_4&a_1&a_2\\1&1&1&1\end{pmatrix}\neq 0,$$
and in this case $\mathbb O_g \cap \Sigma_{(1,2,3,4)}=\{ (x_1,x_2,x_3,x_4)(1,2,3,4)\mid x_1+x_2+x_3+x_4\neq 0\},$ and consequently $\nu_{(1234)}(g)=1.$

\noindent
2) Up to conjugacy, we may assume that $g$ is conjugate in $G$ to one of the elements  listed in the first column of Table 1. Given $y=(x_1,x_2,x_3,x_4)(1234)\in \Sigma_{(1234)}$, consider the elements $w=(z_1,z_2,z_3,z_4)\in \langle g, y \rangle \cap \mathbb Z_2^4$ described in the second column of Table \ref{tab:gw}.
\begin{table}[h]
	\centering
	\begin{tabular}{|c|l|}
		\hline
		\textbf{$g$} & \textbf{$w$} \\
		\hline
		$(a, b, 0, d)(13)$ & 
		$\begin{aligned}
			(ygy)^2 = &\bigl(x_1+x_2+x_3+x_4+b+d,\, 2(x_1+x_2),\\
			&\phantom{\bigl(}x_1+x_2+x_3+x_4+b+d,\, 2(x_3+x_4+a)\bigr)
		\end{aligned}$ \\
		\hline
		$(a, 0, b, 0)(12)(34)$ &
			$(gy)^2= \bigl(a+b+x_2+x_4, 2x_1, a+b+x_2+x_4, 2x_3)
	$ \\
		\hline
		$(a,b,0,0)(13)(24)$ &
			$gy^2= \bigl(a+x_3+x_4, b+x_1+x_4, x_1+x_2, x_2+x_3)
		$\\
		\hline
		$(a,0,0,0)(1234)$ &$yg^{-1}=(x_1-a,x_2,x_3,x_4)$\\
		\hline
	\end{tabular}
	\caption{$g$ and $w$}
	\label{tab:gw}
\end{table}
Since $w, w^g, w^{g^2}, w^{g^3}\in \langle g, y\rangle \cap \mathbb Z_2^4,$ it follows that $\langle g, y\rangle$ is open if
$$\begin{aligned}f_g(x_1,x_2,x_3,x_4)&=\det\begin{pmatrix}
z_1&z_2&z_3&z_4\\z_2&z_3&z_4&z_1\\z_3&z_4&z_1&z_2\\z_4&z_1&z_2&z_3
\end{pmatrix}\\[0.5em]&=(z_1+z_2+z_3+z_4)(z_1-z_2+z_3-z_4)((z_1-z_3)^2+(z_2-z_4)^2)\neq 0.\end{aligned}$$
Except when $g=(a,b,0,0)(13)(24)$ and $a=b,$ $f_g(x_1,x_2,x_3,x_4)$
is a non zero polynomial in $\mathbb Z_2[x_1,x_2,x_3,x_4].$ In particular
$\{(x_1,x_2,x_3,x_4)\in \mathbb Z_2^4\mid f_g(x_1,x_2,x_3,x_4)=0\}$ is a subset of $\mathbb Z_2^4$ with measure $0,$ and consequently $\nu_{(1234)}(g)=1.$

\noindent 3) Assume $g=(u_1,u_2,u_3,u_4)(13)(24)$ and $u_1-u_3=u_2-u_4.$
Up to conjugacy we may assume $g=(a,a,0,0)(13)(24).$
Consider $y=(x_1,x_2,x_3,x_4)(2,4).$ Then
$$\langle (gy)^2, ((gy)^2)^y, g^2, y^2\rangle \leq \langle g, y\rangle \cap \mathbb Z_2^4$$ and 
$$\begin{aligned}
\det\begin{pmatrix}(gy)^2\\((gy)^2)^y\\g^2\\y^2\end{pmatrix}&=
\det\begin{pmatrix}
	a+x_1+x_3 & 2(a+x_4)  & a+x_1+x_3 & 2x_2      \\
	a+x_1+x_3 & 2x_2      & a+x_1+x_3 & 2(a+x_4)      \\
	a          & a         & a          & a          \\
	2x_1       & x_2+x_4  & 2x_3       & x_2+x_4
\end{pmatrix}\\[1em]&=
8a(x_3-x_1)(a+x_4-x_2)(x_1+x_3-x_2-x_4)
.
\end{aligned}$$
We conclude that if $a\neq 0$ then $\nu_{(24)}(g)=1.$ 

Now we claim that if $y = (x_1, x_2, x_3, x_4)\sigma$, where $\sigma$ 
is a $4$-cycle, then $g$ and $y$ are not adjacent. It is not restrictive 
to assume $\sigma = (1234)$. Let $K = \langle g, y \rangle = 
\langle gy^{2}, g \rangle$. Notice that, setting $W := \{(y_1,y_2,y_3,y_4) \in \mathbb{Z}_2^4 \mid y_1+y_3 = y_2+y_4\}$, we have that
\[
gy^{2} = (a+x_3+x_4,\, a+x_1+x_4,\, x_1+x_2,\, x_2+x_3) 
\in W.
\]
In particular, since $y$ normalizes $W$ and $y^4 = (z,z,z,z) \in W$, for $z=x_1+x_2+x_3+x_4$,
it follows that $K \leq V\langle y \rangle$ and
\[
K \cap \mathbb{Z}_2^4 \leq V\langle y \rangle \cap \mathbb{Z}_2^4
= W\langle y^4 \rangle = W \cong \mathbb{Z}_2^3.
\]
Hence $K$ is not an open subgroup of $G$.

It remains to prove that $g=(13)(24)$ is an isolated vertex of $\Gv(G).$ Suppose, by contradiction, that $y=(x_1,x_2,x_3,x_4)\sigma_y \in \mathbb O_g.$ We already know that $\mathbb O_g\cap \Sigma_{(1234)^{\pm 1}}=\mathbb O_g \cap \Sigma_{\rm{id}}=\emptyset.$ Moreover, for every $x\in  D_4,$ $\mathbb O_g\cap \Sigma_y=\mathbb O_g\cap \Sigma_{gy},$ so we may assume $\sigma_y \in \{(13),(13)(24)\}.$ If $y=(x_1,x_2,x_3,x_4)(13)$ then $\langle g, y \rangle \cap \mathbb Z_2^4\leq \langle (x_1,x_2,x_3,x_4),(x_3,x_2,x_1,x_4),(x_1,x_4,x_3,x_2)\rangle$, which is not open. Finally assume $y = (x_1, x_2, x_3, x_4)(12)(34)$.  Notice that
$W := \{(t,u,-t,-u) \in \mathbb{Z}_2^4 \mid t, u \in \mathbb Z_2\}$ is normalized by $K=\langle g,y\rangle$ and contains $K^\prime.$
In particular $K\cap \mathbb Z_2^4 = W+\langle y^2\rangle$ is not open in $\mathbb Z_2^4.$

We can now give a complete description of the graph $\Gv(G)$. The isolated 
vertices either belong to $\mathbb{Z}_2^4$ or are of the form 
$(u_1,u_2,u_3,u_4)(13)(24)$ with $u_1=u_3$ and $u_2=u_4$. Let $\Omega_1$ be 
the set of non-isolated vertices of the form $(u_1,u_2,u_3,u_4)\sigma$ with 
$\sigma\neq(13)(24)$. By (1) and (2), any two elements of $\Omega_1$ have a common neighbour in $\Sigma_{(1234)}$, hence $\Omega_1$ induces 
a subgraph of diameter at most $2$. Now let $\Omega_2$ be the set of the vertices of the form 
$(u_1,u_2,u_3,u_4)(13)(24)$ with $u_1-u_3=u_2-u_4$ and $u_1\neq u_3.$ 
By (3), any two elements of $\Omega_2$ have a common neighbour in $\Sigma_{(24)}$, so their distance in $\Gv(G)$ is at most 2. Finally assume $g_1\in \Omega_1$ and $g_2\in \Omega_2.$ There exists $y\in \mathbb O_{g_2}\cap \Sigma_{(12)}$ and the distance between $y$ and $g_1$ is at most 2, so the distance between $g_1$ and $g_2$ is at most 3.
In the particular case when $g_1\in V,$ since $\mathbb O_{g_1}\subseteq \Sigma_{(1234)^{\pm 1}}$ and $\mathbb O_{g_2}\cap \Sigma_{(1234)^{\pm 1}}=\emptyset,$
the distance between $g_1$ and $g_2$ is exactly 3. We conclude that
$\Dv(G)$ is connected, with diameter 3.
\end{proof}

\bibliography{Mylibrary} 
\bibliographystyle{abbrv}

\end{document}